\crefname{enumi}{}{}
\newif\ifpdf
\numberwithin{equation}{subsection}
\newtheorem{thm}{Theorem}[section]
\newcommand{\myendsymbol}{\ensuremath{\diamondsuit}}
\declaretheorem[
  style=definition,
  title=Conjecture,
  qed={$\myendsymbol$},
  sharenumber=thm,
]{conj}
\declaretheorem[
  style=definition,
  title=Lemma,
  qed={},
  sharenumber=thm,
]{lem}
\declaretheorem[
  style=definition,
  title=Proposition,
  qed={},
  sharenumber=thm,
]{prop}
\declaretheorem[
  style=definition,
  title=Corollary,
  qed={},
  sharenumber=thm,
]{cor}
\declaretheorem[
  style=definition,
  title=Remark,
  qed={$\myendsymbol$},
  sharenumber=thm,
]{rmk}
\declaretheorem[
  style=definition,
  title=Convention,
  qed={$\myendsymbol$},
  sharenumber=thm,
]{cnv}
\declaretheorem[
  style=definition,
  title=Notation,
  qed={$\myendsymbol$},
  sharenumber=thm,
]{nota}
\declaretheorem[
  style=definition,
  title=Definition,
  qed={$\myendsymbol$},
  sharenumber=thm,
]{dfn}
\declaretheorem[
  style=definition,
  title=Example,
  qed={$\myendsymbol$},
      sharenumber=thm,
]{exa}
\declaretheorem[
  style=definition,
  title=Remark,
  qed={$\myendsymbol$},
  sharenumber=thm,
]{rem}
\newcommand{\nd}{\noindent}
\newcommand{\dC}{{\mathds C}}
\newcommand{\dQ}{{\mathds Q}}
\newcommand{\dZ}{{\mathds Z}}
\newcommand{\dL}{{\mathbb L}}
\newcommand{\bD}{{\mathbb D}}
\newcommand{\cA}{\mathcal{A}}
\newcommand{\cC}{\mathcal{C}}
\newcommand{\cD}{\mathscr{D}}
\newcommand{\cE}{\mathcal{E}}
\newcommand{\cM}{\mathcal{M}}
\newcommand{\cN}{\mathcal{N}}
\newcommand{\cO}{\mathcal{O}}
\newcommand{\cR}{\mathcal{R}}
\newcommand{\cS}{\mathcal{S}}
\newcommand{\cU}{\mathcal{U}}
\newcommand{\cCE}{\mathcal{C}\!\mathcal{E}}
\newcommand{\fa}{\mathfrak{a}}
\newcommand{\fg}{\mathfrak{g}}
\newcommand{\D}{\displaystyle}
\DeclareMathOperator{\Spec}{\textup{Spec}\,}
\DeclareMathOperator{\Der}{\textup{Der}}
\DeclareMathOperator{\Aut}{\textup{Aut}}
\DeclareMathOperator{\Sym}{\textup{Sym}}
\DeclareMathOperator{\GL}{\textup{GL}}
\DeclareMathOperator{\pr}{\textup{pr}}
\DeclareMathOperator{\act}{\textup{act}}
\DeclareMathOperator{\im}{\textup{im}}
\DeclareMathOperator{\FL}{\textup{FL}}
\DeclareMathOperator{\id}{\textup{id}}
\DeclareMathOperator{\SL}{\textup{SL}}
\DeclareMathOperator{\coker}{\textup{coker}}
\DeclareMathOperator{\roots}{\textup{roots}}
\newcommand{\MHM}{\textup{MHM}}
\newsavebox\foobox
\newcommand{\suchthat}{\;\ifnum\currentgrouptype=16 \middle\fi|\;}
\DeclareMathOperator{\codim}{codim}
\DeclareMathOperator{\trace}{trace}
\DeclareMathOperator{\Tot}{Tot}
\DeclareMathOperator{\Pic}{Pic}
\DeclareMathOperator{\sheafHom}{\mathscr{H}\kern -3pt\textit{om}\kern 1pt}
\DeclareMathOperator{\sheafTor}{\mathscr{T}\kern -3pt\textit{or}\kern 1pt}
\DeclareMathOperator{\sheafExt}{\mathscr{E}\kern -2pt\textit{xt}\kern 1pt}
\DeclareMathOperator{\sheafDer}{\mathscr{D}\kern -1pt\textit{er}\kern 1pt}
\DeclareMathOperator{\differential}{d\!}
\DeclareMathOperator{\ad}{ad}
\DeclareMathOperator{\Ad}{Ad}
\newcommand{\N}{\mathds{N}}
\renewcommand{\P}{\mathds{P}}
\newcommand{\C}{\mathds{C}}
\newcommand{\Q}{\mathds{Q}}
\newcommand{\Z}{\mathds{Z}}
\renewcommand{\O}{\mathcal{O}}
\newcommand{\Ell}{\mathscr{L}}
\let\originalleft\left
\let\originalright\right
\renewcommand{\left}{\mathopen{}\mathclose\bgroup\originalleft}
\renewcommand{\right}{\aftergroup\egroup\originalright}
\newcommand\restr[2]{{#1_{| {#2}}}}
\newcommand\todoText[1]{{\noindent \colorbox{yellow}{\parbox{\minof{\widthof{#1}}{\dimexpr\linewidth-2\fboxsep}}{#1}}}}
\begin{document}
\title{\Large Duality theory of tautological systems}
\author{\small Paul Görlach and Christian Sevenheck}

\renewcommand{\thefootnote}{}
\footnotetext{
\noindent
CS was partially supported by DFG grant SE 1114/6-1.
\\
\noindent 2010 \emph{Mathematics Subject Classification.} 32C38, 14F10, 32S40\\ Keywords: Tautological system, Holonomic dual, Mixed Hodge module}
\renewcommand{\thefootnote}{\arabic{footnote}}

\maketitle

\begin{abstract}
  \noindent We discuss the holonomic dual of tautological systems,
  with a view towards applications to linear free divisors and to homogeneous spaces. As a technical tool, we consider a Chevalley--Eilenberg type complex, generalizing Euler--Koszul technology from the GKZ theory, and show equivariance and holonomicity of it.
  \end{abstract}

\tableofcontents

\section{Introduction} \label{sec:intro}

The purpose of this paper is to discuss the duality theory of some differential systems that are naturally attached to group actions on algebraic varieties. More precisely, we are concerned with the so-called \emph{tautological systems}, which were first considered in \cite{Hot98}, discussed from various points of views (especially towards applications to mirror symmetry) in \cite{TautPeriod1, Bloch_HolRankProb, TautPeriod2, HuangLianZhu},
and then studied thoroughly in our previous paper \cite{GRSSW}. There, we were especially interested in Hodge theoretic aspects of tautological systems associated to homogeneous spaces. By the functoriality properties within the category of mixed Hodge modules, it is particularly important to understand the duality theory of such systems. In the present paper, we study the holonomic dual of general tautological systems in detail. Our results can be considered as a generalization of a similar study for the case where the group is an algebraic torus (leading to the well-known GKZ-systems), in that case, it is a result of Walther (see \cite{Walther-Dual}) that under some suitable hypotheses, the dual system is again a GKZ-system defined essentially by the same initial data.
One consequence of our findings is that such a direct expression of the holonomic dual holds essentially only if the dimension of the group coincides with that of the variety it acts on (which is obviously true in the toric case). In general, we give a cohomological expression of the holonomic dual (see \cref{prop:DualityTautCM} and \cref{thm:dualityTautGorenstein} below).
If tautological systems are constructed functorially (as it is the case for those coming from homogeneous spaces, see \cite{GRSSW}), there is a natural morphism between this system and its dual (the direction of which depends on the precise construction by standard functors).

Let us give a more precise overview about the main results of this paper.
Throughout, we work over the field $\C$. To a regular representation $\rho \colon G \to V$ of a connected reductive algebraic group $G$, an orbit closure $\overline Y \subseteq V$ and a Lie algebra homomorphism $\beta \colon \mathfrak g \to \C$, one can associate the Fourier-transformed tautological system $\hat\tau(\rho,\overline Y, \beta)$. This is a $\mathscr D_V$-module with an explicit cyclic presentation given by the $\cD_V$-ideal generated by the vanishing ideal of $\overline Y \subseteq V$ and the vector fields on $V$ induced by the group action of $G$ with a twist given by $\beta$.
The actual tautological system (called $\tau(\rho,\overline Y, \beta)$ in \cite{GRSSW}) is the total Fourier-Laplace transformation of $\hat\tau(\rho,\overline Y, \beta)$, that is, a cyclic $\cD_{V^\vee}$-module on the dual space $V^\vee$. Since the duality functor and the Fourier-Laplace transformation commute up to sign, it is essentially equivalent to describe $\bD \tau(\rho,\overline Y, \beta)$ and
$\bD \hat\tau(\rho,\overline Y, \beta)$. For this reason, we will in this paper consider almost exclusively the latter object.

Our main tool to describe the holonomic dual of $\hat\tau(\rho,\overline Y, \beta)$ is a Chevalley--Eilenberg type complex, namely, we consider $\hat{T}^\bullet(\rho, \overline Y, \beta) = \mathscr D_V \otimes_{\O_V} \O_{\overline Y} \otimes \bigwedge^{-\bullet} \mathfrak g$
with a differential depending on $\beta$.
We comment below in \cref{sec:LieAlg} on how our complex relates to similar constructions in Lie algebra theory. Let us remark that in the toric case, our complex reduces to the Euler-Koszul complex studied in the theory of GKZ-systems (see \cref{rmk:EK-complex} below).

One of our main results can be summarized as follows.
\begin{thm}[see \cref{prop:DualityTautCM} and \cref{thm:dualityTautGorenstein} for more details]\label{thm:MainResult}
The complex $\hat T(\rho, \overline Y, \beta)$ has cohomological amplitude in $\{n-m,\ldots,0\}$ for $n := \dim(\overline Y)$, $m := \dim G$. Moreover, we have
\[\hat\tau(\rho, \overline Y, \beta) \cong H^0 \hat T(\rho, \overline Y, \beta)
\quad\quad\text{ and }\quad\quad
\mathbb D\hat\tau(\rho, \overline Y, \beta) \cong H^{n-m} \hat T(\rho, \overline Y, \tilde \beta),\]
where $\tilde \beta \colon \mathfrak g \to \C$ is a Lie algebra homomorphism potentially different from $\beta$ and the second isomorphism assumes moreover that $\overline Y \subseteq V$ is Gorenstein (e.g.\ a complete intersection).

In particular, if $\dim(G)=\dim(\overline{Y})$, then $\bD\hat\tau(\rho, \overline Y, \beta)$ is again a tautological system for $\rho$, $\overline{Y}$ and $\tilde\beta$.
\end{thm}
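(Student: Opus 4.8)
The plan is to obtain the final assertion as a direct specialization of the two isomorphisms in the theorem (proved in \cref{prop:DualityTautCM} and \cref{thm:dualityTautGorenstein}) to the numerical coincidence $\dim G = \dim \overline Y$.

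Write $n := \dim(\overline Y)$ and $m := \dim G$. The first thing to observe is that the cohomological amplitude $\{n-m,\ldots,0\}$ degenerates to the single degree $\{0\}$ as soon as $m = n$; thus for every Lie algebra homomorphism $\gamma$ the complex $\hat T(\rho, \overline Y, \gamma)$ is concentrated in degree $0$, and in particular $H^{n-m}(-) = H^0(-)$ in this regime. I would then apply the first isomorphism of the theorem not to $\beta$ but to the twisted parameter $\tilde\beta$ — which is legitimate because that isomorphism holds for an arbitrary Lie algebra homomorphism — to get $\hat\tau(\rho, \overline Y, \tilde\beta) \cong H^0 \hat T(\rho, \overline Y, \tilde\beta)$. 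Combining this with the second isomorphism $\mathbb D\hat\tau(\rho, \overline Y, \beta) \cong H^{n-m} \hat T(\rho, \overline Y, \tilde\beta)$ (which carries over the Gorenstein hypothesis of the theorem) and the degree collapse yields
\[
\mathbb D\hat\tau(\rho, \overline Y, \beta) \;\cong\; H^{0} \hat T(\rho, \overline Y, \tilde\beta) \;\cong\; \hat\tau(\rho, \overline Y, \tilde\beta),
\]
which is exactly the statement that the holonomic dual is again the tautological system attached to $\rho$, $\overline Y$ and $\tilde\beta$.

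All the mathematical substance is packed into the two isomorphisms and the amplitude estimate, which I take as given here; the final clause is then purely formal. The one point I would verify explicitly is that the first isomorphism is genuinely uniform in the Lie algebra homomorphism, so that it may be read off for $\tilde\beta$ and the object $H^0 \hat T(\rho, \overline Y, \tilde\beta)$ can be re-interpreted as a tautological system rather than merely as a piece of cohomology. Given the manifestly parameter-uniform definition of $\hat T(\rho, \overline Y, \cdot)$, I expect this to present no real obstacle, and the argument is then complete.
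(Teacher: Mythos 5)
Your proposal is correct and matches the paper's own route: the final clause is exactly \cref{cor:DualDimEqual}, which the paper likewise obtains as a purely formal specialization of \cref{thm:dualityTautGorenstein} to $n=m$, using that the identification $\hat\tau(\rho,\overline Y,\cdot)\cong H^0\hat T(\rho,\overline Y,\cdot)$ holds uniformly in the parameter (it comes straight from \cref{lem:ComplexSRepresents} and right-exactness, so reading it off at $\tilde\beta$ is indeed unproblematic). The only cosmetic difference is that you invoke the amplitude collapse to a single degree, whereas all that is needed is the trivial equality $H^{n-m}=H^0$ when $n=m$; this does not affect correctness.
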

A special case where the assumptions of \cref{thm:DualityMain} are satisfied and where we have $\dim\overline{Y}=\dim(G)$ is constructed from so-called linear free divisors. In this case, we use the general duality result to strengthen considerably the main result from \cite{NarvaezSevenheck}, namely, we show that in this case $\hat{\tau}(\rho,\overline{Y},\beta)$ underly a complex mixed Hodge module for all but at most finitely many values of $\beta$ (see \cref{prop:casesLFD} and \cref{cor:AllButFinteLFD}), and that moreover under a stronger assumption on $\beta$, the actual tautological system $\tau(\rho,\overline{Y},\beta)$ associated to a given linear free divisor has an irreducible monodromy representation (\cref{prop:casesLFD}, 3.).

In \cref{sec:MIsNPlusOne} we comment about a few more special cases where under additional assumptions we get sharper duality results. We show in particular that if $\dim(G)=\dim\overline{Y}+1$, then in some cases the dual of $\hat\tau(\rho,\overline{Y},\beta)$ is a again tautological system for $\rho$ and $\overline{Y}$ and a possibly different $\beta$.

One of the main motivations for our work come from a central result of \cite{GRSSW} (namely, Theorem~6.14 in loc.cit.), where we studied tautological systems defined by homogeneous spaces: The  system $\tau(\rho, \overline{Y}, \beta)$ (the same statement holds for the system $\hat\tau(\rho, \overline{Y}, \beta)$) underlies a (in general complex) mixed Hodge module which has a weight filtration of length at most two. Therefore, it is of particular interest to understand the only possible non-trivial weight filtration step. By the functorial construction of $\hat\tau$ (resp. of $\tau$) (see again Theorem~6.14, point 2.\ in loc.cit.), if the weight filtration is non-trivial (i.e., of length two) we have a duality morphism $\tau(\rho, \overline{Y}, \beta)\rightarrow \bD \tau(\rho, \overline{Y}, \beta)$, and the non-trivial weight step is the kernel of this morphism. We postpone the study of this duality morphism to a subsequent paper, however, we do make in \cref{sec:HomSpaces} a few comments and conjectures about it.
It should be noticed that in the case where the group $G$ is an algebraic torus, and when considering GKZ-systems defined by an action of a torus, this duality morphism has been computed explicitly in \cite[Lemma 2.12]{RS12} based on \cite[Proposition 1.15]{Reich2}. This computation is a key step in establishing a mirror symmetry statement for nef complete interse ctions in toric varieties using non-affine Landau-Ginzburg models (see \cite[Theorem 1.10]{RS12}).

\textbf{Acknowledgements:} We would like to thank Thomas Reichelt and Uli Walther for many fruitful discussions about the topic of this paper and Christian Lehn and Patrick Graf for answering our questions about dualizing modules.

\section{Duality for Lie algebroids} \label{sec:LieAlg}

In this section, we exhibit a general duality result in the realm of Lie algebroids, which we will in later sections apply to our specific situation.

\subsection{Chevalley--Eilenberg complexes for Lie algebroids}

We recall for the reader's convenience basic facts about Lie algebroids and their Chevalley--Eilenberg complexes and fix notations. Most of them are easily found in the literature, see \cite{Rinehart, Che99}. We also prove a certain self-duality result on the level of the Chevalley--Eilenberg complex for Lie algebroids, for related statements, see e.g.\ \cite{Huebschmann}.

We recall that a Lie algebroid $(\mathcal E, [\cdot, \cdot], Z)$ on a smooth complex variety $X$ is an $\O_X$-module $\mathcal E$ equipped with a Lie bracket $[\cdot, \cdot] \colon \mathcal E \times \mathcal E \to \mathcal E$ (assumed to be $\C$-bilinear, alternating and satisfying the Jacobi identity), making $\mathcal E$ a sheaf of Lie algebras over $\C$, and together with a map $Z \colon \mathcal E \to \Theta_X$, called the anchor map of $\mathcal E$, which is both a morphism of $\O_X$-modules and a morphism of sheaves of Lie algebras over $\C$. We will mostly work with Lie algebroids $(\mathcal E, [\cdot, \cdot], Z)$ such that the $\O_X$-module $\mathcal E$ is locally free of finite rank. We often speak of a Lie algebroid $\mathcal E$, with the data of the Lie bracket and the anchor map understood implicitly.

Given a Lie algebroid $\mathcal E$, there is a universal enveloping algebra $\mathcal U(\mathcal E)$. We will consider (left or right) $\cU(\cE)$-modules which will always be assumed to be $\O_X$-quasi-coherent. If $\cE$ is locally free of finite type as $\O_X$-module, a PBW-theorem holds for $\cU(\cE)$ (see, e.g., \cite[Theorem 3.1]{Rinehart}), i.e., there is canonical filtration on $\cU(\cE)$ such that its graded algebra is isomorphic to the symmetric algebra (over $\cO_X$) of $\cE$.
In particular, by standard arguments in homological algebra (see, e.g., \cite[Appendix (A.25)]{NarvaezDual} or \cite[Appendix IV, Proposition 4.14]{Bjoerk}), $\cU(\cE)$ has finite global dimension. Henceforth, when speaking of the derived category of $\cU(\cE)$-modules, we always mean the correponding bounded derived category.

For a Lie algebroid $\mathcal E$ that is locally free of rank $m$ as an $\O_X$-module, the line bundle $\omega_{\mathcal E} := \bigwedge_{\O_X}^{m} \mathcal E^\vee$ has a right $\mathcal U(\mathcal E)$-module structure, see, e.g.\ \cite[Appendix (A.20)]{NarvaezDual}.
The tensor product over $\cO_X$ of two left (resp.\ a left and a right) $\cU(\cE)$-modules  is again a left (resp.\ right) $\cU(\cE)$-module as in the theory of $\mathscr D$-modules, see e.g.\ \cite[Proposition 1.2.9 and Proposition 1.2.10]{Hotta} for the case of $\cD$-modules or \cite{NarvaezDual} or \cite[{(2.1) to (2.5)}]{Huebschmann} for more general Lie algebroids. Similarly, $\sheafHom_{\O_X}(\cM, \cN)$ of two left (or two right) $\cU(\cE)$-modules $\cM$, $\cN$ is a left $\cU(\cE)$-module.

In particular, if $\mathcal M$ is a left $\mathcal U(\mathcal E)$-module, then $\omega_\mathcal E \otimes_{\O_X} \mathcal M$ is a right $\mathcal U(\mathcal E)$-module. Conversely, if $\mathcal M'$ is a right $\mathcal U(\mathcal E)$-module, then $\mathcal M' \otimes_{\O_X} \omega_{\mathcal E}^\vee$ is a left $\mathcal U(\mathcal E)$-module.

For any Lie algebroid $\mathcal E$ which is locally free of finite rank $m$ over $\O_X$, with universal enveloping algebra $\mathcal U := \mathcal U(\mathcal E)$, we consider the duality functor
\[\mathbb D_{\mathcal U} :=  R\!\sheafHom_{\mathcal U}(\cdot,\mathcal U)  \otimes_{\O_X} \omega_{\mathcal E}^\vee [m]\]
in the derived category of left $\mathcal U$-modules.
While in this section, we deal with general Lie algebroids, we will later apply the discussion here to one of the following examples:

\begin{exa} \label{ex:lieAlgebraAsLieAlgebroid}
  If $X$ is a point, then a Lie algebroid on $X$ is nothing but a Lie algebra $\mathfrak g$ over $\C$. The universal enveloping algebra of $\mathfrak g$ as a Lie algebroid over a point is the universal enveloping algebra $\mathcal U(\mathfrak g)$ of $\mathfrak g$ as a Lie algebra.
\end{exa}

\begin{exa} \label{ex:DModAsLieAlgebroid}
  For any smooth complex variety $X$, the tangent bundle on $X$ is a Lie algebroid with the anchor map being the identity map. This Lie algebroid is locally free of rank $\dim X$. Its universal enveloping algebra is $\mathcal U(\Theta_X) = \mathscr D_X$. In this case, we denote the duality functor $\mathbb D_{\mathscr D_X}$ simply by $\mathbb D$.
\end{exa}

Notice that, already from these two basic examples, modules over universal enveloping algebras of Lie algebroids can be seen as a uniformizing framework including as special cases both $\mathfrak g$-modules and $\mathscr D_X$-modules.

\begin{exa} \label{ex:AModAsLieAlgebroid}
  If $G$ is a connected linear algebraic group acting on $X$, then $\O_X \otimes \mathfrak g$ is a Lie algebroid on $X$ with bracket given by
  \[[f \otimes \xi, f' \otimes \xi'] := f Z(\xi)(f') \otimes \xi' - f' Z(\xi')(f) \otimes \xi, \qquad f,f' \in \O_X,\; \xi, \xi' \in \mathfrak g\]
  and the anchor map given by $f \otimes \xi \mapsto fZ_X(\xi)$, where $Z_X(\xi)$ is the vector field on $X$ induced by $\xi \in \mathfrak g$ and the group action of $G$ on $X$. Explicitly, $Z_X(\xi)$ is the vector field that associates to a point $y \in X$ the tangent vector $\differential \varphi_y(\xi)$, which is the image of $\xi \in \mathfrak g = T_1 G$ under the differential of the morphism $\varphi_y \colon G \to X$, $g \mapsto g^{-1} \cdot y$. Note that $\O_X \otimes \mathfrak g$ is free of rank $\dim G$ as an $\O_X$-module. We denote the universal enveloping algebra as
  \[\mathcal A_X^G := \mathcal U(\O_X \otimes \mathfrak g) \cong \O_X \otimes \mathcal U(\mathfrak g),\]
  with multiplication given by
  \[(f \otimes \xi) \cdot (f' \otimes \xi') = f f' \otimes \xi \xi' + f Z_X(\xi)(f') \otimes \xi' \qquad f,f' \in \O_X,\; \xi, \xi' \in \mathfrak g.\]
  When no confusion can arise, we may drop the superindex $G$ and simply write $\mathcal A_X$. The anchor map extends to a homomorphism of $\C$-algebras $\mathcal A_X \to \mathscr D_X$ of the universal enveloping algebras.
\end{exa}

\begin{exa} \label{ex:KerAsLieAlgebroid}
  In the previous example, the kernel of the anchor map $Z \colon \O_X \otimes \mathfrak g \to \Theta_X$ is a Lie algebroid on $X$ with the bracket inherited from $\O_X \otimes \mathfrak g$ and the trivial anchor map. If the action of $G$ on $X$ is transitive, then $Z \colon \O_X \otimes \mathfrak g \to \Theta_X$ is a surjection of locally free $\O_X$-modules of rank $\dim G$ and $\dim X$, respectively, hence this Lie algebroid is a locally free $\O_X$-module of rank $\dim G - \dim X$. In general, this Lie algebroid is not of the form of \cref{ex:AModAsLieAlgebroid}, which in particular will mean that the Chevalley--Eilenberg complex considered below does in this case not simply boil down to the classical Chevalley--Eilenberg complex of Lie algebras.
\end{exa}

We next consider the Chevalley--Eilenberg complex of a Lie algebroid.
\begin{dfn} \label{def:SComplex}
Let $\mathcal E$ be a Lie algebroid on a connected smooth complex variety $X$ and assume $\mathcal E$ is locally free of  finite rank $m$ as a $\cO_X$-module. Denote by $\mathcal U := \mathcal U(\mathcal E)$ the corresponding universal enveloping algebra. Moreover, let $\cR$ be any sheaf of associative $\dC$-algebras on $X$.
For any $(\cR,\mathcal U)$-bimodule $\mathcal N$ (this means in particular that the left $\cR$-structure and the right $\cU$-structure on $\cN$ commute), define the Chevalley--Eilenberg complex $\mathcal S^\bullet_{\cR|\cU}(\mathcal N)$
with
\[\mathcal S^{-\ell}_{\cR|\cU}(\mathcal N) := \mathcal N \otimes_{\O_X} \bigwedge_{\O_X}^\ell \mathcal E\]
(note that $\mathcal S^k_{\cR|\cU}(\mathcal N) = 0$ for $k \notin \{-m,\dots,0\}$)
and differential
\begin{align*}
  \delta^{-\ell} \colon \qquad \mathcal N \otimes_{\O_X} \bigwedge_{\O_X}^\ell \mathcal E&\to \mathcal N \otimes_{\O_X} \bigwedge_{\O_X}^{\ell-1} \mathcal E \\
  n \otimes \xi_1 \wedge \dots \wedge \xi_\ell &\mapsto
  \sum_{i=1}^\ell (-1)^{i+1} (n \cdot \xi_i) \otimes \xi_1 \wedge \dots \wedge \widehat{\xi_i} \wedge \dots \wedge \xi_\ell \\
  &\phantom{\mapsto{}}+\sum_{i<j} (-1)^{i+j} n \otimes [\xi_i,\xi_j] \wedge \xi_1 \wedge \dots \wedge \widehat{\xi_i} \wedge \dots \wedge \widehat{\xi_j} \wedge \dots \wedge \xi_\ell. \qedhere
\end{align*}
\end{dfn}
Then $\cS_{\cR|\cU}^\bullet(\cN)$ is a complex of left $\mathcal R$-modules (one checks that $\delta^{-\ell} \circ \delta^{-\ell+1} = 0$). One can prove that $\mathcal S_{\mathcal R|\cU}$ is an exact functor from the abelian category of $(\mathcal R,\mathcal U)$-bimodules to the abelian category of complexes of left $\mathcal R$-modules.

\begin{rem}\label{rem:ConstrRightModule}
The above construction applies in particular if $\cN$ is only a right $\cU$-module, in which case we put $\cR:=\dC:=
\underline{\dC}_X$. Then we recover the construction from
\cite[page 46]{GRSSW}, i.e. $\cS^\bullet_{\dC|\cU}(\cN)$ is a complex of (sheaves of) $\dC$-vector spaces only.

However, for us, the main focus of the above construction is when we start with two locally free Lie algebroids $\cE_1$ and $\cE_2$, with ranks $m_1$ and $m_2$, respectively.
Writing $\mathcal U_1 := \mathcal U(\mathcal E_1)$ and $\mathcal U_2 := \mathcal U(\mathcal E_2)$ for the corresponding universal envelopping algebras, we then get for a $(\mathcal U_1,\mathcal U_2)$-bimodule $\mathcal N$ the Chevalley--Eilenberg complex $\mathcal S^\bullet_{\cU_1|\cU_2}(\mathcal N)$, which is a complex of left $\cU_1$-modules.

Notice that a $(\mathcal U_1,\mathcal U_2)$-bimodule $\mathcal N$ can naturally be seen as a left $\cU_1 \otimes \cU_2^{op}$-module, and again by a PBW-type theorem, the sheaf of rings $\cU_1^{op} \otimes \cU_2$ has finite global homological dimension.

\end{rem}

In the case of \cref{ex:AModAsLieAlgebroid}, when we consider a Lie algebroid of the form $\mathcal E = \O_X \otimes \mathfrak g$ for a $G$-action on $X$, we have the following comparision result relating the above complex with the corresponding object in the theory of Lie algebras.

\begin{lem}\label{lem:ComparisonLieAlgHom}
Let $G$ be a connected linear algebraic group acting on a connected smooth complex variety $X$ and consider the Lie algebroid $\O_X \otimes \mathfrak g$ with universal enveloping algebra $\cU=\cO_X\otimes \cU(\fg)$.
Let $\cN$ be a $(\cR,\cU)$-bimodule. Then we have an isomorphism of complexes of left $\mathcal R$-modules
$$
\left(\cS^{-\bullet}_{\cR|\cU}(\cN),\ \delta\right)
\cong
\left(\cN\otimes \bigwedge^\bullet \fg,\ d \right)=:{\cCE}^\bullet_\fg(\cN)
$$
where the right hand side is the standard Chevalley--Eilenberg complex for \emph{right} modules over Lie algebras (as in \cite[Corollary 7.7.3]{Weibel}). \end{lem}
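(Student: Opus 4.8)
The plan is to establish the isomorphism degreewise first, and then verify that the two differentials coincide; the content lies entirely in the differential comparison, once one has reduced to ``constant'' sections of $\mathcal E$.

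First I would identify the underlying graded objects. Since $\mathcal E = \O_X \otimes \fg$ is free as an $\O_X$-module on any $\C$-basis $\eta_1,\dots,\eta_m$ of $\fg$, the elements $e_I := (1\otimes\eta_{i_1})\wedge\cdots\wedge(1\otimes\eta_{i_\ell})$ for $I=\{i_1<\cdots<i_\ell\}$ form an $\O_X$-basis of $\bigwedge^\ell_{\O_X}\mathcal E$, giving a canonical identification $\bigwedge^\ell_{\O_X}\mathcal E \cong \O_X \otimes_\C \bigwedge^\ell_\C \fg$. Hence
$$\mathcal S^{-\ell}_{\cR|\cU}(\cN) = \cN \otimes_{\O_X} \bigwedge\nolimits^\ell_{\O_X}\mathcal E \;\cong\; \cN \otimes_{\O_X}\bigl(\O_X \otimes_\C \bigwedge\nolimits^\ell_\C \fg\bigr) \;\cong\; \cN \otimes_\C \bigwedge\nolimits^\ell_\C \fg,$$
which is exactly the degree-$\ell$ term of $\cCE^\bullet_\fg(\cN)$, and the isomorphism is one of left $\cR$-modules (the $\cR$-action sits on the $\cN$-factor throughout). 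Under the reindexing $\cS^{-\bullet}$, the term $\cS^{-\ell}$ occupies the same degree as the $\ell$-th term $\cN\otimes\bigwedge^\ell\fg$ of $\cCE^\bullet_\fg(\cN)$, and in both complexes the differential lowers $\ell$ by one, so the indexing conventions match.

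The core of the argument is to check that this degreewise isomorphism is a morphism of complexes, i.e.\ that $\delta$ and $d$ agree. Because the tensor product defining $\mathcal S^{-\ell}$ is taken over $\O_X$ and $\bigwedge^\ell_{\O_X}\mathcal E$ is $\O_X$-free on the $e_I$, every local section is a finite sum $\sum_I n_I \otimes e_I$ with $n_I \in \cN$; it therefore suffices to evaluate $\delta^{-\ell}$ on generators of the form $n \otimes (1\otimes\eta_1)\wedge\cdots\wedge(1\otimes\eta_\ell)$. Two observations then do all the work. First, the right $\cU$-action of the constant section $1\otimes\eta_i$ agrees with the right $\fg$-module structure on $\cN$ coming from the subalgebra inclusion $\cU(\fg) \cong 1\otimes\cU(\fg)\hookrightarrow \cU$; indeed the multiplication formula of \cref{ex:AModAsLieAlgebroid} gives $(1\otimes\eta)(1\otimes\eta')=1\otimes\eta\eta'$ since $Z_X(\eta)(1)=0$, so $1\otimes\cU(\fg)$ is a subalgebra isomorphic to $\cU(\fg)$. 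Thus the first sum in $\delta^{-\ell}$ becomes $\sum_i(-1)^{i+1}(n\cdot\eta_i)\otimes\eta_1\wedge\cdots\widehat{\eta_i}\cdots\wedge\eta_\ell$. Second, the algebroid bracket of constant sections reduces to the Lie bracket of $\fg$: in the bracket formula of \cref{ex:AModAsLieAlgebroid} the anchor-derivative terms vanish on the constant function $1$, leaving $[1\otimes\eta_i,\,1\otimes\eta_j]=1\otimes[\eta_i,\eta_j]_\fg$, so the second sum becomes $\sum_{i<j}(-1)^{i+j}n\otimes[\eta_i,\eta_j]_\fg\wedge\eta_1\wedge\cdots\widehat{\eta_i}\cdots\widehat{\eta_j}\cdots\wedge\eta_\ell$. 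These are precisely the two terms of the standard right-module Chevalley--Eilenberg differential $d$ of \cite[Corollary 7.7.3]{Weibel}.

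The only genuinely subtle point is the reduction to constant generators. The differential $\delta^{-\ell}$ is \emph{not} $\O_X$-linear, because the right $\cU$-action folds the $\O_X$-structure into $\cN$, so one cannot naively pull $\O_X$-coefficients through $\delta$. This is circumvented precisely by evaluating $\delta$ on the $\O_X$-basis $\{e_I\}$ of constant wedges, absorbing all $\O_X$-coefficients into the $\cN$-factor \emph{beforehand} (legitimate since the tensor is over $\O_X$); once restricted to these generators the entire $\O_X$- and anchor-structure of the Lie algebroid drops out, and the identification with $\cCE^\bullet_\fg(\cN)$ is immediate. Finally, naturality in $\cN$ — hence functoriality of the isomorphism in the category of $(\cR,\cU)$-bimodules — is clear, since every step above is manifestly compatible with bimodule morphisms.
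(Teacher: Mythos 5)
Your proof is correct and is essentially the paper's own proof written out in full: the paper simply asserts that the canonical degreewise identification $\cN \otimes_{\O_X} \bigwedge_{\O_X}^\ell(\O_X \otimes \fg) \cong \cN \otimes \bigwedge^\ell \fg$ is ``compatible with the differential,'' and your reduction to wedges of constant sections, together with the two observations that the right action of $1 \otimes \eta$ is the right $\fg$-action through the subalgebra $1 \otimes \cU(\fg) \subseteq \cU$ and that the algebroid bracket of constant sections is $1 \otimes [\eta_i,\eta_j]_\fg$, is exactly that check. One remark: your bracket computation uses the standard action-algebroid bracket, which contains the term $f f' \otimes [\xi,\xi']_\fg$ that the displayed formula in \cref{ex:AModAsLieAlgebroid} accidentally omits; your reading is the correct one (it is forced by compatibility with the stated multiplication on $\cU = \O_X \otimes \cU(\fg)$, whose commutator of $1\otimes\xi$ and $1\otimes\xi'$ is $1\otimes[\xi,\xi']_\fg$), and with the paper's literal formula the lemma would in fact be false.
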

\begin{proof}
    One immediately checks that the isomorphism
    \[\mathcal S^{-\ell}_{\mathcal R|\mathcal U}(\mathcal N)
    \cong \mathcal N \otimes_{\O_X} \bigwedge_{\O_X}^\ell (\O_X \otimes \mathfrak g)
    \cong \mathcal N \otimes_{\O_X} (\O_X \otimes \bigwedge^\ell \mathfrak g)
    \cong \mathcal N \otimes \bigwedge^\ell \mathfrak g\]
    is compatible with the differential.
\end{proof}
\begin{rem}\label{rem:LieAlgHom}
Notice that in particular that, for $\mathcal E = \O_X \otimes \mathfrak g$, the cohomologies of $\mathcal S^\bullet_{\C|\mathcal U(\mathcal E)}$ correspond to the classical Lie algebra homology in the sense that
$$
H^{-i}\cS^\bullet_{\dC|\cU}(\cN) = H_i(\fg,\cN^\text{left}),
$$
where we consider the right $\mathcal U$-module $\mathcal \cN$ as a \emph{right} module over the Lie algebra $\fg$ via the inclusion $\fg\hookrightarrow \Gamma(X,\cE)$ and $\mathcal \cN^\text{left}$ denotes the corresponding \emph{left} module over $\mathfrak a$ (via $\eta \cdot n := n \cdot (-\eta)$) and where $H_\bullet(\fg,\cN^\text{left})$ denotes the Lie algebra homology of $\fa$ with coefficients in $\cN^\text{left}$ (as in \cite[Def. 7.2.2, Corollary 7.3.6]{Weibel}).\end{rem}

\begin{rem}
In the very simple case where $\cE=\Theta_X$ (so that $\cU(\cE)=\mathscr D_X$), for any right $\mathscr D_X$-module $\cN$, the complex
$\cS^\bullet_{\dC|\mathscr D_X}(\cN)$ is nothing but the well-known \emph{Spencer complex} of $\cN$.
\end{rem}

\textbf{Notation: } Throughout, let $X$ be a connected smooth complex variety. We will for brevity of notation often write $\O$ instead of $\O_X$.

The following basic fact is an analogue of \cite[Lemma~1.2.11]{Hotta}:
\begin{lem} \label{lem:HTTanalogue}
  Let $\cM_1$ and $\cM_2$ be left $\cU$-modules and let $\cN$ be a $(\cR,\cU)$-bimodule for some sheaf of $\C$-algebras $\cR$. Then there are natural isomorphisms
  \[(\cN \otimes_\O \cM_2) \otimes_{\cU} \cM_1 \cong \cN \otimes_{\cU} (\cM_1 \otimes_\cO \cM_2) \cong (\cN \otimes_\cO \cM_1) \otimes_\cU \cM_2\]
  of left $\cR$-modules. An analogous result holds in the derived category of left $\cR$-modules.
\end{lem}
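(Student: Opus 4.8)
The plan is to first establish the isomorphisms at the level of (non-derived) modules by writing down explicit mutually inverse maps, and then to upgrade to the derived category by deriving the $\cU$-tensor factor against a suitable resolution.

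First I would fix the module structures. With the standard conventions for these induced (bi)module structures (as in \cite{Hotta}), the right $\cU$-action on $\cN\otimes_\O\cM_2$ is $(n\otimes m_2)\cdot\xi = (n\cdot\xi)\otimes m_2 - n\otimes(\xi\cdot m_2)$ for a local section $\xi$ of $\cE$, while the left $\cU$-action on $\cM_1\otimes_\O\cM_2$ is $\xi\cdot(m_1\otimes m_2)=(\xi\cdot m_1)\otimes m_2 + m_1\otimes(\xi\cdot m_2)$. I would then define
\[\varphi\colon (\cN\otimes_\O\cM_2)\otimes_\cU\cM_1 \to \cN\otimes_\cU(\cM_1\otimes_\O\cM_2), \qquad (n\otimes m_2)\otimes m_1\mapsto n\otimes(m_1\otimes m_2),\]
which is manifestly $\cR$-linear since the $\cR$-action lives entirely on $\cN$ and commutes with the $\cU$- and $\O$-structures. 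The only point requiring a check is compatibility with the $\cU$-balancing: applying $\varphi$ to $((n\otimes m_2)\cdot\xi)\otimes m_1$ and to $(n\otimes m_2)\otimes(\xi\cdot m_1)$, and using the balancing relation $(n\cdot\xi)\otimes(m_1\otimes m_2) = n\otimes\xi\cdot(m_1\otimes m_2)$ on the target, both sides reduce to $n\otimes(\xi\cdot m_1\otimes m_2)$, the two derivation terms $n\otimes(m_1\otimes\xi\cdot m_2)$ cancelling. Hence $\varphi$ is well defined.

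Next I would write down the evident candidate inverse $\psi\colon n\otimes(m_1\otimes m_2)\mapsto (n\otimes m_2)\otimes m_1$, verify its well-definedness by the symmetric computation, and observe that $\varphi$ and $\psi$ are mutually inverse on generators; this gives the first isomorphism. For the second isomorphism I would use that the braiding $m_1\otimes m_2\mapsto m_2\otimes m_1$ is an isomorphism of left $\cU$-modules $\cM_1\otimes_\O\cM_2\cong\cM_2\otimes_\O\cM_1$ (the left-action formula being symmetric in the two tensor factors), and compose it with the first isomorphism applied after interchanging the roles of $\cM_1$ and $\cM_2$.

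Finally, for the derived statement I would derive the functor in the $\cU$-tensor variable. Using that $\cU$ has finite global dimension (from PBW, as recalled above), choose a bounded resolution $\cP^\bullet\xrightarrow{\sim}\cM_1$ by $\cU$-modules that are locally free over $\cU$; by PBW these terms are in particular locally free, hence flat, over $\O$. The key input is the induced-module isomorphism: for any left $\cU$-module $\cL$, the diagonal left $\cU$-module $\cU\otimes_\O\cL$ is isomorphic to the induced module $\cU\otimes_\O\cL_0$, where $\cL_0$ denotes $\cL$ with its underlying $\O$-structure and the action on the left factor only; this is the Lie-algebroid analogue of the corresponding statement for $\mathscr D_X$-modules. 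Consequently each $\cP^i\otimes_\O\cM_2$ is acyclic for $\cN\otimes_\cU(-)$. Applying $\varphi$ termwise then yields an isomorphism of complexes $(\cN\otimes_\O\cM_2)\otimes_\cU\cP^\bullet \cong \cN\otimes_\cU(\cP^\bullet\otimes_\O\cM_2)$; the left-hand side computes $(\cN\otimes_\O\cM_2)\otimes^L_\cU\cM_1$, while the right-hand side computes $\cN\otimes^L_\cU(\cM_1\otimes_\O\cM_2)$ (the inner tensor being read in the derived sense, which collapses to the ordinary one when $\cM_1$ is $\O$-flat, as in our applications). I expect the main obstacle to be precisely this last step: establishing the induced-module isomorphism and the ensuing acyclicity, and doing the $\O$-flatness bookkeeping so that $\cP^\bullet\otimes_\O\cM_2$ represents the intended inner tensor product. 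The remaining two derived isomorphisms then follow by the same symmetry argument as in the underived case.
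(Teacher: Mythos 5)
Your verification of the two underived isomorphisms is correct and is exactly the standard argument: in fact the paper gives no proof of this lemma at all, stating it as a basic fact and pointing to \cite[Lemma~1.2.11]{Hotta}, whose proof is the same explicit-map computation you carry out (balancing over $\cE$ with the two derivation terms cancelling, $\O$-balancing, and the braiding $\cM_1\otimes_\O\cM_2\cong\cM_2\otimes_\O\cM_1$ for the second isomorphism). So up to the derived statement there is nothing to object to.

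The derived upgrade, however, has a genuine gap, and it sits exactly where you suspected. Resolving only $\cM_1$ cannot work: the claim that each $\cP^i\otimes_\O\cM_2$ is acyclic for $\cN\otimes_\cU(-)$ is false in general. By your own induced-module isomorphism, locally $\cP^i\otimes_\O\cM_2\cong\left(\cU\otimes_\O(\cM_2)_0\right)^{\oplus k}$, and for any induced module one has
\[
\cN\otimes^{\mathbb L}_\cU\left(\cU\otimes_\O(\cM_2)_0\right)\cong\cN\otimes^{\mathbb L}_\O(\cM_2)_0,
\]
so the obstruction to acyclicity is precisely $\sheafTor^\O_j(\cN,\cM_2)$ for $j>0$: an induced module $\cU\otimes_\O\cL_0$ is flat over $\cU$ only when $\cL_0$ is flat over $\O$, or relative to a fixed $\cN$ that is flat as a right $\O$-module. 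Concretely, on $X=\A^1$ with $\cU=\mathscr D_X$ take $\cN=\mathscr D_X/x\mathscr D_X$, $\cM_2=\mathscr D_X/\mathscr D_X x$, $\cM_1=\O_X$. Then $\cN\otimes_\O\cM_2=0$ (a divisible $\O$-module tensored with a torsion one vanishes), so the left-hand side of your chain is $0$, whereas $\cN\otimes^{\mathbb L}_\cU(\cM_1\otimes^{\mathbb L}_\O\cM_2)=\cN\otimes^{\mathbb L}_{\mathscr D_X}\cM_2$ has $H^{-1}\cong\C$ (the kernel of right multiplication by $x$ on $\mathscr D_X/x\mathscr D_X$). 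So the mixed statement your strategy would prove is actually false, and no bookkeeping can rescue it; note also that your flatness remark names the wrong module, since $\O$-flatness of $\cM_1$ neither identifies $\cN\otimes_\O\cM_2$ with $\cN\otimes^{\mathbb L}_\O\cM_2$ nor restores the acyclicity — what matters is flatness of $\cM_2$ or of $\cN$.

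The statement the paper actually uses (e.g.\ in \cref{lem:resolution} and \cref{prop:dualOfScalarExtension}) has \emph{all} tensor products derived, and its proof requires resolving both $\cM_1$ and $\cM_2$: replace each by a bounded-above complex whose terms are induced modules $\cU\otimes_\O\cF$ with $\cF$ flat over $\O$ (such resolutions exist: surject an $\O$-flat module onto the given module, induce, and iterate). Then every term $\cP_1^i\otimes_\O\cP_2^j$ is, by the same induced-module computation, induced from an $\O$-flat module, hence flat over $\cU$, and applying your underived isomorphism termwise (it is natural in all three arguments, hence compatible with the differentials of the total complexes) yields
\[
(\cN\otimes^{\mathbb L}_\O\cM_2)\otimes^{\mathbb L}_\cU\cM_1\cong\cN\otimes^{\mathbb L}_\cU(\cM_1\otimes^{\mathbb L}_\O\cM_2)\cong(\cN\otimes^{\mathbb L}_\O\cM_1)\otimes^{\mathbb L}_\cU\cM_2.
\]
Your one-sided argument does become correct under the additional hypothesis that $\cN$ is flat as a right $\O$-module — which happens to hold in every invocation in the paper, where $\cN$ is $\cU$, $\cU_1$ or $\mathscr D_V$ — but the lemma is stated for an arbitrary bimodule $\cN$.
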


\begin{lem} \label{lem:resolutionOfO}
  Let $\mathcal E$ be a Lie algebroid on $X$, locally free of finite rank as an $\O_X$-module, let $\cU := \cU(\cE)$.
  Then $\mathcal S_{\mathcal U|\mathcal U}^\bullet(\mathcal U)$ is a resolution of $\O_X$ by left $\mathcal U$-modules.
\end{lem}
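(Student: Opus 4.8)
The plan is to realize $\cS^\bullet_{\cU|\cU}(\cU)$, augmented by the canonical map to $\O_X$, as an exact sequence, and to prove exactness by passing to the associated graded for the PBW filtration, where the complex degenerates to a Koszul complex. Recall first that $\O_X$ is a left $\cU$-module: a local section $\xi$ of $\cE$ acts as the derivation $Z(\xi)$ through the anchor map, and this extends to an augmentation $\epsilon\colon \cU=\cS^0_{\cU|\cU}(\cU)\to\O_X$, $P\mapsto P\cdot 1$. Since $Z(\xi)(1)=0$, one checks that $\epsilon\circ\delta^{-1}=0$, so that there is an augmented complex
\[\cdots \longrightarrow \cU\otimes_\O\bigwedge^2\cE \xrightarrow{\ \delta^{-2}\ } \cU\otimes_\O\cE \xrightarrow{\ \delta^{-1}\ } \cU \xrightarrow{\ \epsilon\ } \O_X \longrightarrow 0,\]
and the claim is precisely that this is exact. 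As exactness of a complex of $\O_X$-quasi-coherent sheaves may be tested on stalks, I would first reduce to a neighbourhood on which $\cE$ is free of rank $m$ with basis $\xi_1,\dots,\xi_m$.

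Next I would filter. Equip $\cU$ with its PBW filtration $F_\bullet\cU$, which is exhaustive, bounded below with $F_0\cU=\O_X$, and which by the PBW theorem quoted above satisfies $\gr^F\cU\cong\Sym_\O(\cE)$. I filter the terms of the complex by the shift $F_p\big(\cU\otimes_\O\bigwedge^\ell\cE\big):=F_{p-\ell}\cU\otimes_\O\bigwedge^\ell\cE$ and give $\O_X$ the trivial filtration. Inspecting the two summands of the differential in \cref{def:SComplex}, the multiplication summand (right multiplication by the $\xi_i$) preserves the filtration level $p$, whereas the bracket summand leaves the $\cU$-factor unchanged while lowering $\ell$ by one, hence maps $F_p$ into $F_{p-1}$. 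Consequently the bracket term dies on the associated graded, so that $\gr^F$ of the augmented complex is exactly the Koszul complex of $\Sym_\O(\cE)$ associated to the canonical inclusion $\cE\hookrightarrow\Sym_\O(\cE)$, augmented to $\Sym_\O(\cE)/\Sym^{\geq 1}_\O(\cE)=\O_X$.

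Over the trivializing open set one has $\Sym_\O(\cE)\cong\O[\xi_1,\dots,\xi_m]$, and this Koszul complex is the classical one on the regular sequence $\xi_1,\dots,\xi_m$, which is a resolution of $\O_X$; hence $\gr^F$ of the augmented complex is exact. I would then invoke the standard fact that, for an exhaustive and bounded-below filtration, exactness of the associated graded forces exactness of the original complex (via the convergent spectral sequence of the filtered complex, whose $E_1$-page then vanishes in the relevant degrees). This yields exactness of the augmented complex, i.e.\ $\cS^\bullet_{\cU|\cU}(\cU)$ is a resolution of $\O_X$, each term $\cU\otimes_\O\bigwedge^\ell\cE$ being locally $\cU$-free. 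The main obstacle is the filtration bookkeeping in the second step: choosing the shift so that the Lie-bracket contribution is genuinely of lower order (so that the leading term is precisely the Koszul differential), together with justifying the associated-graded comparison — which is exactly where local freeness of $\cE$ and boundedness of the PBW filtration enter.
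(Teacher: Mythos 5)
Your proof is correct and is essentially the argument the paper relies on: the paper gives no proof of its own but cites Rinehart (Lemma 4.1) and Chemla (Theorem 2.3.1), and Rinehart's proof is precisely this PBW-filtration argument in which the Chevalley--Eilenberg differential degenerates to the Koszul differential of $\Sym_{\O_X}(\cE)$ on the associated graded. Indeed, the paper's remark following \cref{lem:resolution} explicitly describes this same route (filter by the PBW filtration and pass to graded objects) as a direct proof, of which \cref{lem:resolutionOfO} is the special case $\cM=\O_X$.
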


  This is proven, e.g., in \cite[Lemma 4.1]{Rinehart} (see also \cite[Theorem 2.3.1]{Che99}).

\begin{lem}\label{lem:ComplexSRepresents}
    Let $\mathcal N$ be a $(\mathcal U_1, \mathcal U_2)$-bimodule. The complex $\mathcal S^\bullet_{\cU_1|\cU_2}(\mathcal N)$ represents $\mathcal N \otimes_{\mathcal U_2}^\mathbb{L} \O_X$ in the derived category of left $\mathcal U_1$-modules.
\end{lem}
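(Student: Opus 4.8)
The plan is to compute $\cN \otimes^{\mathbb L}_{\cU_2}\O_X$ by means of the explicit resolution of $\O_X$ furnished by \cref{lem:resolutionOfO}, applied to the Lie algebroid $\cE_2$. That lemma provides a resolution $\cS^\bullet_{\cU_2|\cU_2}(\cU_2) \to \O_X$ by left $\cU_2$-modules, and I would first note that each term $\cS^{-\ell}_{\cU_2|\cU_2}(\cU_2) = \cU_2 \otimes_\O \bigwedge_\O^\ell \cE_2$ is a locally free left $\cU_2$-module of finite rank, since $\bigwedge_\O^\ell \cE_2$ is locally free of finite rank over $\O_X$. In particular the terms are flat as left $\cU_2$-modules, so this resolution may be used to compute the derived tensor product, yielding a natural isomorphism
\[\cN \otimes^{\mathbb L}_{\cU_2} \O_X \;\cong\; \cN \otimes_{\cU_2} \cS^\bullet_{\cU_2|\cU_2}(\cU_2)\]
in the derived category of left $\cU_1$-modules, where the left $\cU_1$-action descends from the commuting left $\cU_1$-structure on $\cN$.

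Next I would identify the right-hand complex degreewise with $\cS^\bullet_{\cU_1|\cU_2}(\cN)$. For any $\O$-module $P$ there is the natural extension-of-scalars isomorphism $\cN \otimes_{\cU_2}(\cU_2 \otimes_\O P) \cong \cN \otimes_\O P$, sending $n \otimes (u \otimes p) \mapsto (n\cdot u)\otimes p$ with inverse $n \otimes p \mapsto n \otimes (1 \otimes p)$, and this is an isomorphism of left $\cU_1$-modules. Applying it with $P = \bigwedge_\O^\ell \cE_2$ gives
\[\cN \otimes_{\cU_2} \left(\cU_2 \otimes_\O \bigwedge\nolimits_\O^\ell \cE_2\right) \;\cong\; \cN \otimes_\O \bigwedge\nolimits_\O^\ell \cE_2 \;=\; \cS^{-\ell}_{\cU_1|\cU_2}(\cN),\]
so that the two complexes agree in each cohomological degree.

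It then remains to verify that these degreewise isomorphisms commute with the differentials, i.e.\ that under the identification above the map $\id_\cN \otimes \delta$ induced from the Chevalley--Eilenberg differential of $\cS^\bullet_{\cU_2|\cU_2}(\cU_2)$ coincides with the differential $\delta$ of \cref{def:SComplex} defining $\cS^\bullet_{\cU_1|\cU_2}(\cN)$. This I expect to be a routine check: the bracket terms $[\xi_i,\xi_j]$ are $\O$-linear and therefore pass through the identification unchanged, while the anchor sum, which on $\cS^\bullet_{\cU_2|\cU_2}(\cU_2)$ uses right multiplication $u \mapsto u\cdot \xi_i$ in $\cU_2$, produces $(n\cdot u)\cdot \xi_i$, which for $u=1$ collapses to $n\cdot\xi_i$ and thus reproduces exactly the first sum in the differential of $\cS^\bullet_{\cU_1|\cU_2}(\cN)$. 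The only step I regard as requiring genuine care is the flatness justification in the first paragraph, namely that the locally free resolution $\cS^\bullet_{\cU_2|\cU_2}(\cU_2)$ legitimately computes $\otimes^{\mathbb L}_{\cU_2}$ in the quasi-coherent setting; the remaining work is bookkeeping of signs and of the $\cU_2$-action.
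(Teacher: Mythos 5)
Your proposal is correct and follows exactly the paper's own argument: resolve $\O_X$ by the locally free left $\cU_2$-modules $\cS^\bullet_{\cU_2|\cU_2}(\cU_2)$ (\cref{lem:resolutionOfO}), then identify $\cN \otimes_{\cU_2} \cS^\bullet_{\cU_2|\cU_2}(\cU_2)$ with $\cS^\bullet_{\cU_1|\cU_2}(\cN)$ via extension of scalars. The paper compresses the degreewise identification and the differential check into the single observation $\cS_{\cU_1|\cU_2}(\cN) = \cN \otimes_{\cU_2} \cS^\bullet_{\cU_2|\cU_2}(\cU_2)$, which you have simply spelled out in more detail.
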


\begin{proof}
  By \cref{lem:resolutionOfO}, $\cS^\bullet_{\cU_2|\cU_2}(\cU_2)$ is a resolution of $\O_X$ as a left $\mathcal U_2$-module by locally free left $\mathcal U_2$-modules. The claim then follows by observing $\cS_{\cU_1|\cU_2}(\cN) = \cN \otimes_{\cU_2} \cS^\bullet_{\cU_2|\cU_2}(\cU_2)$.
      \end{proof}

\begin{lem} \label{lem:resolution}
  Let $\mathcal E$ be a Lie algebroid on $X$, locally free of finite rank as an $\O_X$-module, let $\cU := \cU(\cE)$.
    Let $\mathcal M$ be a left $\mathcal U$-module and consider $\mathcal U \otimes_{\O_X} \mathcal M$ with the natural $(\mathcal U,\mathcal U)$-bimodule structure given by
  \[\xi \cdot (u \otimes m) = (\xi u) \otimes m, \quad (u \otimes m) \cdot \xi = (u \xi) \otimes m - u \otimes (\xi \cdot m) \quad \text{for } \xi \in \mathcal E,\ u \in \mathcal U,\ m \in \mathcal M.\]
  Then $\mathcal S_{\mathcal U|\mathcal U}^\bullet(\mathcal U \otimes_{\cO_X} \cM)$ is a resolution of $\mathcal M$ by left $\mathcal U$-modules.
\end{lem}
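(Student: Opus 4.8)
The plan is to recognize $\mathcal S^\bullet_{\cU|\cU}(\cU \otimes_{\O} \cM)$ as a complex computing a derived tensor product that collapses to $\cM$, using only the results already in place. The first step is to identify the bimodule structure written in the statement with an instance of the $\otimes_\O$-formalism. Taking $\cN := \cU$ (with its standard $(\cU,\cU)$-bimodule structure) and the left $\cU$-module $\cM_2 := \cM$, the right $\cU$-module $\cN \otimes_\O \cM_2$ carries the action $(u\otimes m)\cdot\xi = u\xi \otimes m - u\otimes \xi m$, while the left $\cU$-structure coming from $\cR := \cU$ (left multiplication on the $\cU$-factor) is $\xi\cdot(u\otimes m) = \xi u \otimes m$. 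These agree verbatim with the structures in the statement, so $\cU \otimes_\O \cM$ is exactly the $(\cU,\cU)$-bimodule to which \cref{lem:HTTanalogue} applies with $\cR = \cU$.

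Next I would apply \cref{lem:ComplexSRepresents} with $\cU_1 = \cU_2 = \cU$: the complex $\mathcal S^\bullet_{\cU|\cU}(\cU \otimes_\O \cM)$ represents $(\cU \otimes_\O \cM)\otimes^{\mathbb L}_\cU \O_X$ in the derived category of left $\cU$-modules. It then suffices to compute this derived tensor product, and here I invoke the derived form of \cref{lem:HTTanalogue} with $\cN = \cU$, $\cM_1 = \O_X$, $\cM_2 = \cM$, which gives
\[(\cU \otimes_\O \cM)\otimes^{\mathbb L}_\cU \O_X \;\cong\; \cU \otimes^{\mathbb L}_\cU (\O_X \otimes^{\mathbb L}_\O \cM)\]
as objects in the derived category of left $\cU$-modules. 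Since $\cU$ is free of rank one over itself, $\cU \otimes^{\mathbb L}_\cU(-)$ is the identity; and $\O_X \otimes^{\mathbb L}_\O \cM = \O_X \otimes_\O \cM \cong \cM$, with $\O_X$ acting as the unit for $\otimes_\O$ on left $\cU$-modules (via $1\otimes m \mapsto m$, under which $\xi\cdot(1\otimes m) = Z(\xi)(1)\otimes m + 1\otimes\xi m = 1\otimes \xi m$). Hence the derived tensor product is $\cM$, with its original left $\cU$-structure, concentrated in degree $0$.

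To conclude that this is a resolution in the intended sense I would add two finishing remarks. The terms are of the correct type: $\mathcal S^{-\ell}_{\cU|\cU}(\cU\otimes_\O\cM) = (\cU\otimes_\O\cM)\otimes_\O \textstyle\bigwedge^\ell\cE \cong \cU \otimes_\O (\cM \otimes_\O \bigwedge^\ell\cE)$ as left $\cU$-modules, so each is induced from an $\O_X$-module, in particular flat (and locally free when $\cM$ is $\O$-locally free). The augmentation is the action map $\cU\otimes_\O\cM \to \cM$, $u\otimes m\mapsto u\cdot m$, which annihilates $\im\delta^{-1}$ since $(u\xi)\cdot m - u\cdot(\xi m)=0$; combined with the computation above it induces the quasi-isomorphism onto $\cM$ placed in degree $0$.

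The main obstacle is not conceptual but is the careful bookkeeping of module structures: one must verify that the ad hoc bimodule structure in the statement is precisely the one produced by $\otimes_\O$ on $\cN = \cU$, $\cM_2 = \cM$, and that every identification in \cref{lem:HTTanalogue} is compatible with the left $\cU$-action, so that the final object is $\cM$ with its original left $\cU$-module structure rather than merely its underlying $\O_X$- or $\C$-structure. Once this matching is checked, the statement reduces entirely to \cref{lem:ComplexSRepresents,lem:HTTanalogue,lem:resolutionOfO}.
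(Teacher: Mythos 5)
Your proposal is correct and follows essentially the same route as the paper's own proof: identify the complex with $(\cU \otimes_{\O_X} \cM) \otimes_{\cU}^{\mathbb{L}} \O_X$ via \cref{lem:ComplexSRepresents}, then collapse this via the derived form of \cref{lem:HTTanalogue} to $\cU \otimes_{\cU}^{\mathbb{L}} (\O_X \otimes_{\O_X}^{\mathbb{L}} \cM) \cong \cM$. The only point the paper makes explicit that you leave implicit is that $\cU$ is $\O_X$-locally free (by PBW), so that $\cU \otimes_{\O_X} \cM \cong \cU \otimes_{\O_X}^{\mathbb{L}} \cM$ and the derived lemma applies; your extra verifications (matching of the bimodule structures, the augmentation map) are consistent with, and somewhat more careful than, the paper's write-up.
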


\begin{proof}
  By \cref{lem:ComplexSRepresents}, $S_{\cU|\cU}^\bullet(\cN)$ represents $(\cU \otimes_{\cO_X} \cM) \otimes_\cU^\mathbb{L} \cO_X$. Since $\cU$ is $\cO_X$-locally free (so that obviously $\cU \otimes_{\cO_X} \cM \cong \cU \otimes_{\cO_X}^\mathbb{L} \cM$),  this is by \cref{lem:HTTanalogue} isomorphic to $\cU \otimes_{\cU}^\mathbb{L} (\cO_X \otimes_{\cO_X}^\mathbb{L} \cM) \cong \cM$.
  \end{proof}

  Notice that the statement of the previous lemma can also directly be proven by considering the filtration on $\mathcal N = \mathcal U \otimes_{\cO_X} \mathcal M$ induced by the PBW-filtration on $\mathcal U$ and passing to the graded objects, see \cite[Proof of Lemma 6.9]{GRSSW}. Then we obtain \cref{lem:resolutionOfO} as a special case when $\cM=\cO_X$.

We next observe see a property that can be seen as a certain self-duality of the Chevalley--Eilenberg complex:

\begin{prop} \label{lem:dualityGeneralized}
  Let $\mathcal N$ be a $(\mathcal U_1, \mathcal U_2)$-bimodule and let $\mathcal K$ be a $(\mathcal U_1,\mathcal R)$-bimodule for some sheaf of $\C$-algebras $\mathcal R$.
  There is a natural isomorphism of right $\mathcal R$-modules
   \[\sheafHom_{\mathcal U_1}(S_{\mathcal U_1|\mathcal U_2}^\bullet(\mathcal N),\mathcal K)[m_2] \cong \mathcal S_{\mathcal \C|\mathcal U_2}^\bullet(\omega_{2} \otimes_{\O_X} \sheafHom_{\mathcal U_1}(\mathcal N,\mathcal K)).\]
   In the derived category of right $\mathcal R$-modules, we obtain an isomorphism
   \[R\!\sheafHom_{\mathcal U_1}(\mathcal N \otimes_{\mathcal U_2}^\mathbb{L} \O_X,\mathcal K)[m_2] \cong (\omega_{2} \otimes_{\O_X}^\mathbb{L} R\!\sheafHom_{\mathcal U_1}(\mathcal N,\mathcal K)) \otimes_{\mathcal U_2}^\mathbb{L} \O_X.\]
\end{prop}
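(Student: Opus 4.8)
The plan is to first establish the stated isomorphism on the level of honest complexes and then bootstrap it to the derived statement by resolving $\mathcal N$. Throughout write $\mathcal H := \sheafHom_{\mathcal U_1}(\mathcal N, \mathcal K)$. Since the left $\mathcal U_1$- and right $\mathcal U_2$-actions on $\mathcal N$ commute and $\mathcal K$ is a $(\mathcal U_1,\mathcal R)$-bimodule, $\mathcal H$ is naturally a $(\mathcal U_2,\mathcal R)$-bimodule, the left $\mathcal U_2$-action being $(\xi\cdot\phi)(n)=\phi(n\xi)$. Hence $\omega_2 \otimes_\O \mathcal H$ is a right $\mathcal U_2$-module which is moreover right $\mathcal R$-linear, so that $\mathcal S^\bullet_{\C|\mathcal U_2}(\omega_2 \otimes_\O \mathcal H)$ is a complex of right $\mathcal R$-modules; this matches the claimed form of the isomorphism.

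For the term-by-term comparison I would use two elementary identifications. First, because $\bigwedge^\ell \mathcal E_2$ is $\O$-locally free of finite rank and the left $\mathcal U_1$-action on $\mathcal S^{-\ell}_{\mathcal U_1|\mathcal U_2}(\mathcal N) = \mathcal N \otimes_\O \bigwedge^\ell \mathcal E_2$ lives entirely on the $\mathcal N$-factor, the standard $\sheafHom$–tensor manipulations give
\[
\sheafHom_{\mathcal U_1}\!\left(\mathcal N \otimes_\O \textstyle\bigwedge^\ell \mathcal E_2,\ \mathcal K\right) \cong \mathcal H \otimes_\O \textstyle\bigwedge^\ell \mathcal E_2^\vee .
\]
Second, contraction gives the canonical $\O$-linear isomorphism $\bigwedge^\ell \mathcal E_2^\vee \cong \omega_2 \otimes_\O \bigwedge^{m_2-\ell}\mathcal E_2$. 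Combining these and tracking degrees, the degree-$(-k)$ term of both sides is identified with $\mathcal H \otimes_\O \bigwedge^{m_2-k}\mathcal E_2^\vee \cong (\omega_2 \otimes_\O \mathcal H)\otimes_\O \bigwedge^{m_2-k}\mathcal E_2$, and one checks the index ranges agree after the shift $[m_2]$.

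With the terms identified, the entire content lies in the compatibility of the differentials, which I expect to be the main obstacle. Two distinct dualities intervene. Applying $\sheafHom_{\mathcal U_1}(-,\mathcal K)$ transposes the Chevalley–Eilenberg differential $\delta$ (with $\mathcal N$-coefficients) into the Lie-algebroid cohomology differential $d$ on $\mathcal H \otimes_\O \bigwedge^\bullet \mathcal E_2^\vee$ carrying the left $\mathcal U_2$-structure: the transpose of the anchor/action term is the action term for $\mathcal H$, and the transpose of the bracket term is again the bracket term, with the expected signs. Then contraction against the fundamental section of $\omega_2$ must intertwine this $d$ with the homology differential $\delta$ of $\mathcal S^\bullet_{\C|\mathcal U_2}(\omega_2 \otimes_\O \mathcal H)$ after the degree flip $\ell \mapsto m_2 - \ell$. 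This is precisely the classical self-duality of the Chevalley–Eilenberg complex mediated by the dualizing line bundle $\omega_2$: its right $\mathcal U_2$-module structure is defined so that the ensuing ``divergence'' contribution exactly cancels the correction terms produced when commuting a contraction past $d$. Verifying this intertwining is a local computation in an $\O$-frame of $\mathcal E_2$ with careful sign bookkeeping; here the hypothesis that $\mathcal E_2$ be $\O$-locally free of finite rank (making $\omega_2$ a line bundle and contraction an isomorphism) is essential.

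Finally, to pass to the derived statement I would resolve $\mathcal N$. Using the finite global dimension of $\mathcal U_1 \otimes \mathcal U_2^{op}$ (see \cref{rem:ConstrRightModule}), choose a bounded resolution $\mathcal P^\bullet \xrightarrow{\ \sim\ } \mathcal N$ by $(\mathcal U_1,\mathcal U_2)$-bimodules projective as $\mathcal U_1 \otimes \mathcal U_2^{op}$-modules; each $\mathcal P^i$ is then projective over $\mathcal U_1$ and over $\mathcal U_2^{op}$. Applying the complex-level isomorphism to each $\mathcal P^i$ and totalizing yields an isomorphism of the two total complexes. On the left, $\mathcal S^\bullet_{\mathcal U_1|\mathcal U_2}(\mathcal P^\bullet)$ represents $\mathcal N \otimes^{\mathbb L}_{\mathcal U_2}\O_X$ by \cref{lem:ComplexSRepresents}, and since each term $\mathcal P^i \otimes_\O \bigwedge^\ell \mathcal E_2$ is adapted to $\sheafHom_{\mathcal U_1}(-,\mathcal K)$ (being an $\O$-locally-free twist of a $\mathcal U_1$-projective), the total $\sheafHom_{\mathcal U_1}(-,\mathcal K)[m_2]$ computes $R\sheafHom_{\mathcal U_1}(\mathcal N \otimes^{\mathbb L}_{\mathcal U_2}\O_X,\mathcal K)[m_2]$. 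On the right, $\sheafHom_{\mathcal U_1}(\mathcal P^\bullet,\mathcal K)$ computes $R\sheafHom_{\mathcal U_1}(\mathcal N,\mathcal K)$, tensoring with the line bundle $\omega_2$ is exact and hence realizes $\omega_2 \otimes^{\mathbb L}_\O(-)$, and applying $\mathcal S^\bullet_{\C|\mathcal U_2}(-)$ and totalizing represents $(\omega_2 \otimes^{\mathbb L}_\O R\sheafHom_{\mathcal U_1}(\mathcal N,\mathcal K))\otimes^{\mathbb L}_{\mathcal U_2}\O_X$, again by \cref{lem:ComplexSRepresents}. Matching the two descriptions gives the claimed derived isomorphism.
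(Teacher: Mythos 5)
Your proposal is correct and takes essentially the same route as the paper's own proof: the term-by-term identification via tensor-hom adjunction and contraction against $\omega_2$ (with the differential compatibility left as a sign-bookkeeping verification, exactly as in the paper), followed by passing to the derived statement by resolving $\mathcal N$ by a finite projective resolution of $(\mathcal U_1,\mathcal U_2)$-bimodules and invoking \cref{lem:ComplexSRepresents}. One small index slip: in your displayed degree-$(-k)$ identification the final factor should be $\bigwedge_{\O_X}^{k}\mathcal E_2$ rather than $\bigwedge_{\O_X}^{m_2-k}\mathcal E_2$, as follows from your own contraction isomorphism applied with $\ell = m_2-k$.
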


\begin{proof}
  For the first claim, in terms of objects of the complex, in cohomological degree $-\ell$, we have
  \begin{align*}
    \sheafHom_{\mathcal U_1}(\mathcal S_{\mathcal U_1|\mathcal U_2}^{-(m_2-\ell)}(\mathcal N),\mathcal K)
    &= \sheafHom_{\mathcal U_1}\bigg(\mathcal N \otimes_{\O_X} \bigwedge_{\O_X}^{m_2-\ell} \mathcal E_2,\mathcal K\bigg) \\
    &\cong \sheafHom_{\O_X}\bigg(\bigwedge_{\O_X}^{m_2-\ell} \mathcal E_2, \sheafHom_{\mathcal U_1}(\mathcal N,\mathcal K)\bigg) \\
    &\cong \bigwedge_{\O_X}^{m_2-\ell} \mathcal E_2^\vee \otimes_{\O_X} \sheafHom_{\mathcal U_1}(\mathcal N,\mathcal K) \\
    &\cong \bigwedge_{\O_X}^{m_2} \mathcal E_2^\vee \otimes \bigwedge_{\O_X}^{\ell} \mathcal E_2 \otimes_{\O_X} \sheafHom_{\mathcal U_1}(\mathcal N,\mathcal K) \\
    &\cong \omega_2 \otimes_{\O_X} \bigwedge_{\O_X}^{\ell} \mathcal E_2 \otimes_{\O_X} \sheafHom_{\mathcal U_1}(\mathcal N,\mathcal K) \\
    &\cong \mathcal S_{\C|\mathcal U_2}^{-\ell}(\omega_{2} \otimes_{\O_X} \sheafHom_{\mathcal U_1}(\mathcal N,\mathcal K)).
  \end{align*}
  It is a tedious, yet straightforward exercise to check that these isomorphisms are compatible with the differentials of the complex. (One needs to be careful to choose the isomorphism
  \[\bigwedge_{\O_X}^{m_2-\ell} \mathcal E_2^\vee \cong \bigwedge_{\O_X}^{m_2} \mathcal E_2^\vee \otimes_{\O_X} \bigwedge_{\O_X}^{\ell} \mathcal E_2\]
  with a suitable sign convention.)

  The second claim follows from the first one by replacing $\mathcal N$ by a finite projective resolution of itself as a $(\mathcal U_1, \mathcal U_2)$-bimodule and applying \cref{lem:ComplexSRepresents}. Note that projective $(\mathcal U_1,\mathcal U_2)$-bimodules are in particular projective as left $\mathcal U_1$-modules.
\end{proof}

\subsection{Duality of \texorpdfstring{$\mathcal U(\mathcal E)$}{U(E)}-modules}

In the following, we consider two Lie algebroids $\mathcal E_1$ and $\mathcal E_2$ on $X$ locally free over $\O$ of finite ranks $m_1$ and $m_2$, respectively. Let $\mathcal U_1$ and $\mathcal U_2$ denote their universal enveloping algebras. Recall that $\omega_i := \bigwedge_{\O}^{m_i} \mathcal E_i^\vee$ has a right $\mathcal U_i$-module structure (given by the negated Lie derivative).

We consider the following double side-changing operation on bimodules: If $\mathcal M$ is a $(\mathcal U_2,\mathcal U_1)$-bimodule, then $\omega_2 \otimes_{\O} \mathcal M \otimes_{\O} \omega_1^\vee$ is a $(\mathcal U_1,\mathcal U_2)$-bimodule. Here, note that the first tensor product uses the $\O$-module structure via $\cO \to \cU_2$ and the left module structure of $\cM$ and the second tensor product uses the $\O$-module structure via $\cO \to \cU_1$ and the right module structure of $\cM$. In general, when working with $(\cU_1,\cU_2)$-bimodules, we always write tensor products in an order that makes implicitly clear which $\cO$-module structure is being used.

It is straightforward to check the following basic property:

\begin{lem} \label{lem:leftRightInterchange}
  Let $\cN$ a $(\cU_2,\cU_1)$-bimodule, let $\cM$ be a left $\cU_1$-module and $\cM'$ a left $\cU_2$-module. There are natural isomorphisms
  \begin{align*}
  \omega_2 \otimes_\O (\cN \otimes_\O \cM) \otimes_\O \omega_1^\vee &\cong \cM \otimes_\O (\omega_2 \otimes_\O \cN \otimes_\O \omega_1^\vee)\\
  \text{and } \qquad \omega_2 \otimes_\O (\cM' \otimes_\O \cN) \otimes_\O \omega_1^\vee &\cong (\omega_2 \otimes_\O \cN \otimes_\O \omega_1^\vee) \otimes_\O \cM' \qquad \phantom{\text{and }}
  \end{align*}
  of $(\cU_1,\cU_2)$-bimodules.\end{lem}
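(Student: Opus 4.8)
The plan is to exhibit each isomorphism as the evident reordering of tensor factors and then to check that it is a map of $(\cU_1,\cU_2)$-bimodules. First I would record the module structures involved: since $\cN$ is a $(\cU_2,\cU_1)$-bimodule, its left $\cU_2$- and right $\cU_1$-actions commute, and (as in the convention fixed before the statement) the tensor products are formed using the $\O$-structure coming from $\cU_2$ on the $\omega_2$-side and the one coming from $\cU_1$ on the $\omega_1^\vee$-side. This commutativity is exactly what makes $\cN \otimes_\O \cM$ a $(\cU_2,\cU_1)$-bimodule — its left $\cU_2$-action $\xi\cdot(n\otimes m)=(\xi n)\otimes m$ is well defined because $\xi\in\cE_2$ acts through the $\cU_2$-structure of $\cN$, which commutes with the $\cU_1$-side $\O$-structure along which $\cM$ is glued — and dually makes $\cM\otimes_\O(\omega_2\otimes_\O\cN\otimes_\O\omega_1^\vee)$ a $(\cU_1,\cU_2)$-bimodule. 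Hence both sides of the first isomorphism are genuinely $(\cU_1,\cU_2)$-bimodules, and as sheaves of $\C$-vector spaces both are reorderings of $\omega_2\otimes_\O\cN\otimes_\O\cM\otimes_\O\omega_1^\vee$; the candidate isomorphism is $\omega_2\otimes(n\otimes m)\otimes\omega^\vee\mapsto m\otimes(\omega_2\otimes n\otimes\omega^\vee)$.

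It then remains to verify compatibility with the two actions. For the right $\cU_2$-action the check is immediate: among the tensor factors only $\omega_2$ (right $\cU_2$) and $\cN$ (left $\cU_2$) carry a $\cU_2$-action, while $\cM$ and $\omega_1^\vee$ are inert for $\cU_2$; on both sides the right $\cU_2$-structure is therefore the left-to-right side change applied to the $\omega_2\otimes_\O\cN$ part, with $\cM$ and $\omega_1^\vee$ transported passively. Since the reordering map only moves the $\cU_2$-inert factor $\cM$, it is $\cU_2$-equivariant.

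The substantive point is the left $\cU_1$-equivariance, where $\cM$ genuinely participates (only $\omega_2$ is $\cU_1$-inert here). On the left-hand side one first forms the right $\cU_1$-action on $\cN\otimes_\O\cM$, namely $(n\otimes m)\cdot\eta=(n\eta)\otimes m-n\otimes(\eta m)$ for $\eta\in\cE_1$, and then applies the right-to-left side change $\otimes_\O\omega_1^\vee$. On the right-hand side one instead combines, by the Leibniz rule for $\cM\otimes_\O(-)$, the left action of $\eta$ on $\cM$ with the left $\cU_1$-action on $\omega_2\otimes_\O\cN\otimes_\O\omega_1^\vee$ obtained by side-changing the right $\cU_1$-action of $\cN$. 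Expanding both prescriptions yields three terms each — one with $\eta$ acting on $\cN$, one with $\eta$ acting on $\cM$, and one with $\eta$ acting on $\omega_1^\vee$ via its left $\cU_1$-structure — and after the reordering these match termwise. The second isomorphism is then proved in exactly the same way with the roles of $\cU_1$ and $\cU_2$ (and of left and right) interchanged: now $\cM'$ and $\omega_2$ are $\cU_1$-inert, so left $\cU_1$-equivariance is immediate and the Leibniz computation is carried out for the right $\cU_2$-action.

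I expect the only real obstacle to be bookkeeping in this last computation: one must fix compatible sign conventions for the left-to-right and right-to-left side-changing functors and for the left $\cU_1$-action on $\omega_1^\vee=\bigwedge_\O^{m_1}\cE_1$ (the negated Lie derivative), so that the cross terms produced by the two Leibniz rules and the term coming from $\omega_1^\vee$ combine with the correct signs. Once these conventions are pinned down, the verification is routine, matching the ``straightforward to check'' assertion; no step requires anything beyond the side-changing formalism recalled above.
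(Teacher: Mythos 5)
Your proposal is correct: the paper gives no actual proof of this lemma (it is prefaced only by ``It is straightforward to check the following basic property''), and your direct verification---writing down the evident reordering map, checking it is $\O$-balanced, noting that right $\cU_2$-equivariance is immediate because $\cM$ (resp.\ left $\cU_1$-equivariance because $\cM'$) is inert for that action, and matching the three Leibniz terms for the remaining action termwise---is exactly the routine check the paper intends. The sign-convention point you flag for the side-changing structures on $\omega_1^\vee$ is the only bookkeeping subtlety, and it does not affect the termwise matching, so nothing is missing.
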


If $\mathcal N$ is a $(\mathcal U_1, \mathcal U_2)$-bimodule, then $\sheafHom_{\mathcal U_1}(\mathcal N, \mathcal U_1)$ naturally inherits a $(\mathcal U_2,\mathcal U_1)$-bimodule structure from the right $\cU_2$-module structure on $\cN$ and the right $\cU_1$-module structure on $\cU_1$. We obtain from \cref{lem:dualityGeneralized} the following duality property:

\begin{prop} \label{lem:dualityProperty}
    Let $\mathcal N$ be a $(\mathcal U_1, \mathcal U_2)$-bimodule.
  There is an isomorphism of complexes of left $\mathcal U_1$-modules:
  \[\sheafHom_{\mathcal U_1}(\mathcal S^{\bullet}_{\cU_1|\cU_2}(\mathcal N),\mathcal U_1)[m_2] \otimes_{\O_X} \omega_{1}^\vee \cong \mathcal S_{\cU_1|\cU_2}^\bullet(\omega_{2} \otimes_{\O_X} \sheafHom_{\mathcal U_1}(\mathcal N,\mathcal U_1) \otimes_{\O_X} \omega_1^\vee).\]
  This gives the following duality result in the derived category of left $\mathcal U_1$-modules:
  \[\mathbb D_{\mathcal U_1}(\mathcal N \otimes_{\mathcal U_2}^{\mathbb L} \O_X) \cong (\omega_2 \otimes_{\O_X} R\!\sheafHom_{\mathcal U_1}(\mathcal N,\mathcal U_1) \otimes_{\O_X} \omega_1^\vee) \otimes_{\mathcal U_2}^{\mathbb L} \O_X[m_1-m_2].\]
\end{prop}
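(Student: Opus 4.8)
The plan is to deduce both isomorphisms from \cref{lem:dualityGeneralized} by specializing to $\mathcal K = \mathcal R = \mathcal U_1$ (viewing $\mathcal U_1$ as a $(\mathcal U_1,\mathcal U_1)$-bimodule) and then applying the side-changing operation $(-) \otimes_{\O_X} \omega_1^\vee$, which turns right $\mathcal U_1$-modules into left $\mathcal U_1$-modules. Conceptually all the real work — the sign and differential bookkeeping — was already carried out in \cref{lem:dualityGeneralized}, so here the extra factor $\omega_1^\vee$ enters only as an $\O_X$-flat passenger in the $\mathcal U_1$-direction, independent of the Chevalley--Eilenberg construction in the $\mathcal U_2$-direction.

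For the first claim I would start from the complex-level isomorphism of \cref{lem:dualityGeneralized} with $\mathcal K = \mathcal R = \mathcal U_1$, namely $\sheafHom_{\mathcal U_1}(\mathcal S_{\mathcal U_1|\mathcal U_2}^\bullet(\mathcal N),\mathcal U_1)[m_2] \cong \mathcal S_{\C|\mathcal U_2}^\bullet(\omega_2 \otimes_{\O_X} \cN')$, an isomorphism of complexes of right $\mathcal U_1$-modules (the right $\mathcal U_1$-structure coming from the target $\mathcal U_1$), where I abbreviate $\cN' := \sheafHom_{\mathcal U_1}(\mathcal N,\mathcal U_1)$, a $(\mathcal U_2,\mathcal U_1)$-bimodule. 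Tensoring both sides on the right by $\omega_1^\vee$ over $\O_X$ turns the left-hand side into the left-hand side of the proposition. For the right-hand side I need to commute the side-change past the Chevalley--Eilenberg functor, i.e.\ to identify $\mathcal S_{\C|\mathcal U_2}^\bullet(\omega_2 \otimes_{\O_X} \cN') \otimes_{\O_X} \omega_1^\vee$ with $\mathcal S_{\mathcal U_1|\mathcal U_2}^\bullet(\omega_2 \otimes_{\O_X} \cN' \otimes_{\O_X} \omega_1^\vee)$. In cohomological degree $-\ell$ this is the reordering $((\omega_2 \otimes_{\O} \cN') \otimes_{\O} \bigwedge^\ell \mathcal E_2) \otimes_{\O} \omega_1^\vee \cong (\omega_2 \otimes_{\O} \cN' \otimes_{\O} \omega_1^\vee) \otimes_{\O} \bigwedge^\ell \mathcal E_2$ of $\O_X$-tensor factors, and the two left $\mathcal U_1$-structures agree because the only factor carrying a $\mathcal U_1$-action is $\cN'$, so the side-change by $\omega_1^\vee$ acts exactly as in \cref{lem:leftRightInterchange} while $\bigwedge^\ell \mathcal E_2$ is an inert $\O_X$-coefficient. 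Compatibility with the differentials is immediate, since the differential of \cref{def:SComplex} involves only the right $\mathcal U_2$-action and the bracket on $\mathcal E_2$, neither of which touches the $\omega_1^\vee$-factor.

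For the second claim I would bypass any resolution of $\mathcal N$ and instead invoke the derived statement of \cref{lem:dualityGeneralized} directly. Unwinding the definition of the duality functor gives
\[\mathbb D_{\mathcal U_1}(\mathcal N \otimes_{\mathcal U_2}^{\mathbb L} \O_X) = R\!\sheafHom_{\mathcal U_1}(\mathcal N \otimes_{\mathcal U_2}^{\mathbb L} \O_X,\mathcal U_1) \otimes_{\O_X} \omega_1^\vee[m_1],\]
and substituting the derived isomorphism of \cref{lem:dualityGeneralized} (with $\mathcal K = \mathcal R = \mathcal U_1$) replaces the $R\!\sheafHom$ by $\big((\omega_2 \otimes_{\O_X}^{\mathbb L} R\!\sheafHom_{\mathcal U_1}(\mathcal N,\mathcal U_1)) \otimes_{\mathcal U_2}^{\mathbb L} \O_X\big)[-m_2]$. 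It then remains to move the outer $\otimes_{\O_X} \omega_1^\vee$ inside past $\otimes_{\mathcal U_2}^{\mathbb L} \O_X$ and to collect the shifts into $[m_1-m_2]$. Moving $\omega_1^\vee$ inside is legitimate because $\omega_1^\vee$ is a line bundle, hence $\O_X$-flat, so $\otimes_{\O_X} \omega_1^\vee = \otimes_{\O_X}^{\mathbb L} \omega_1^\vee$ and it commutes with $\otimes_{\mathcal U_2}^{\mathbb L} \O_X$ by a projection-formula-type identity, using crucially that the right $\mathcal U_1$-structure being side-changed and the right $\mathcal U_2$-structure being contracted sit on independent, commuting tensor factors; the same flatness of $\omega_2$ lets me drop the derived decoration on $\omega_2 \otimes_{\O_X}^{\mathbb L}(-)$. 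This yields exactly the stated isomorphism.

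The main obstacle is not any single deep step but the disciplined bookkeeping of the left/right $\mathcal U_1$- and $\mathcal U_2$-module structures through the repeated side-changes: I must keep $\cN' = \sheafHom_{\mathcal U_1}(\mathcal N,\mathcal U_1)$ consistently as a $(\mathcal U_2,\mathcal U_1)$-bimodule, ensure the two side-changes by $\omega_2$ and $\omega_1^\vee$ act on the correct independent factors so that $\omega_2 \otimes_{\O_X} \cN' \otimes_{\O_X} \omega_1^\vee$ carries the intended $(\mathcal U_1,\mathcal U_2)$-bimodule structure, and justify each reordering of $\O_X$-tensor factors and each commutation of $(-)\otimes_{\O_X}\omega_1^\vee$ with $(-)\otimes_{\mathcal U_2}^{\mathbb L}\O_X$ via line-bundle flatness together with the independence of the two algebroid directions. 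Once these structures are pinned down using \cref{lem:leftRightInterchange}, both isomorphisms are formal consequences of \cref{lem:dualityGeneralized}.
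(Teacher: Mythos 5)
Your proposal is correct and takes essentially the same route as the paper's own (very terse) proof: both specialize \cref{lem:dualityGeneralized} to $\mathcal K = \mathcal R = \mathcal U_1$ viewed as a $(\mathcal U_1,\mathcal U_1)$-bimodule, and then pass from right to left $\mathcal U_1$-modules by tensoring with $\omega_1^\vee$. The term-by-term identification of $\mathcal S^\bullet_{\C|\mathcal U_2}(\omega_2\otimes_{\O_X}\sheafHom_{\mathcal U_1}(\mathcal N,\mathcal U_1))\otimes_{\O_X}\omega_1^\vee$ with $\mathcal S^\bullet_{\mathcal U_1|\mathcal U_2}(\omega_2\otimes_{\O_X}\sheafHom_{\mathcal U_1}(\mathcal N,\mathcal U_1)\otimes_{\O_X}\omega_1^\vee)$, and the commutation of the side-change with $(-)\otimes^{\mathbb L}_{\mathcal U_2}\O_X$, are precisely the bookkeeping the paper leaves implicit, and you justify them correctly.
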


\begin{proof}
  This follows from \cref{lem:dualityGeneralized} for $\mathcal K = \mathcal U_1$ considered as a $(\mathcal U_1,\mathcal U_1)$-bimodule, and passing from right $\mathcal U_1$-modules to left $\mathcal U_1$-modules by taking the tensor product with $\omega_1^\vee$.
\end{proof}

Assume now that there is a homomorphism of Lie algebroids $\cE_2 \to \cE_1$. This induces a $\C$-algebra homomorphism $\mathcal U_2 \to \mathcal U_1$ between their universal enveloping algebras.
In particular, every (left or right) $\cU_1$-module may be viewed as a (left or right) $\cU_2$-module via $\cU_2 \to \cU_1$. On the other hand, for a left $\cU_2$-module $\cM$, we may consider $\cU_1 \otimes_{\cU_2}^\mathbb{L} \cM$ in the derived category of $\cU_1$-modules. This object is represented by the complex $\cS^\bullet_{\cU_1|\cU_2}(\cU_1 \otimes_{\O} \cM)$, because
\[
\cS^\bullet_{\cU_1|\cU_2}(\cU_1 \otimes_\O \cM)
\cong (\cU_1 \otimes_\O^\mathbb{L} \cM) \otimes_{\cU_2}^\mathbb{L} \O
\cong (\cU_1 \otimes_\O^\mathbb{L} \O) \otimes_{\cU_2}^\mathbb{L} \cM
\cong \cU_1 \otimes_{\cU_2}^\mathbb{L} \cM,
\]
where the first isomorphism is due to \cref{lem:ComplexSRepresents} and the second isomorphism is due to \cref{lem:HTTanalogue}.
\begin{rem}\label{rem:LogDiv}
An example of the construction just mentioned is when $D\subseteq X$ is a reduced divisor, and when $\cE_2:=\Der(-\log\,D)$ is the sheaf of logarithmic vector fields. Since our general assumption here is that all Lie algebroids are locally free as $\cO$-modules, we have to restrict to the case where $D$ is a free divisor in the sense of K.~Saito. Let $\cE_1:=\Der_X$, and $\cE_2\rightarrow\cE_1$ be the anchor map of $\cE_2$. Then $\cU_2=\cD_X(-\log\,D)$. Suppose that we are given an integrable logarithmic connection $\cM$, i.e., a left $\cU_2$-module which is also locally free of finite rank as $\cO$-module. Then our construction is exactly the situation studied in, e.g. \cite{CN05, CN09}. In particular, $\cM$ is called admissible in \cite[D\'efinition 1.2.2]{CN05} if $\cU_1\otimes^\dL_{\cU_2}\cM$ is concentrated in degree $0$ and if it is a holonomic $\cU_1$($=\cD_X$)-module.
\end{rem}

In this context, the right $\cU_1$-module $\omega_1$ can also be viewed as a right $\cU_2$-module via $\cU_2 \to \cU_1$, which together with the right $\cU_2$-module structure on $\omega_2$ makes
\[\omega_{1|2} := \sheafHom_\O(\omega_1,\omega_2)\]
a left $\cU_2$-module. This is the relative dualizing module in \cite[Appendix, A.28]{NarvaezDual}.

Through $\cU_2 \to \cU_1$, the algebra $\cU_1$ is in particular a $(\cU_2,\cU_1)$-bimodule or a $(\cU_1,\cU_2)$-bimodule. In fact, these two bimodule structures are related to each other under double side-changing via $\omega_{1|2}$ as follows:

\begin{lem} \label{lem:doubleSideChangingReinterpreted}
  Let $\cE_2 \to \cE_1$ be as above.
    Then there is an isomorphism of $(\cU_1,\cU_2)$-bimodules
  \[\omega_2 \otimes_\O \cU_1 \otimes_\O \omega_1^\vee \cong \cU_1 \otimes_\O \omega_{1|2},\]
  where $\cU_1$ is on the left viewed as a $(\cU_2, \cU_1)$-bimodule and on the right as a $(\cU_1, \cU_2)$-bimodule.
  \end{lem}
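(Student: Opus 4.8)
The plan is to write down one explicit morphism of $(\cU_1,\cU_2)$-bimodules and then to promote it to an isomorphism by passing to the associated graded for the PBW filtration. Since $\omega_1,\omega_2$ are line bundles we have $\omega_{1|2}\cong\omega_1^\vee\otimes_\O\omega_2$, and I would start from the canonical $\O$-linear map
\[\iota\colon\omega_{1|2}=\omega_1^\vee\otimes_\O\omega_2\longrightarrow\omega_2\otimes_\O\cU_1\otimes_\O\omega_1^\vee,\qquad \nu\otimes\alpha\mapsto\alpha\otimes 1\otimes\nu,\]
where the target is regarded as an $\O$-module by restricting its left $\cU_1$-action. Because $\cU_1\otimes_\O\omega_{1|2}$ is precisely the left $\cU_1$-module induced along $\O\hookrightarrow\cU_1$ from the $\O$-module $\omega_{1|2}$, the induction–restriction adjunction extends $\iota$ uniquely to a left-$\cU_1$-linear map $\Phi\colon\cU_1\otimes_\O\omega_{1|2}\to\omega_2\otimes_\O\cU_1\otimes_\O\omega_1^\vee$, $u\otimes\phi\mapsto u\cdot\iota(\phi)$, where $u\cdot(-)$ is the left $\cU_1$-action on the double side-change. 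In this way left $\cU_1$-linearity holds by construction, and well-definedness over the balancing of $\cU_1\otimes_\O\omega_{1|2}$ follows once one checks that $\iota$ is $\O$-linear for the restricted structure (this is where it matters that one does \emph{not} naively permute tensor factors).

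It then remains to verify that $\Phi$ is right $\cU_2$-linear, and this is the step I expect to be the genuine work. Since the left $\cU_1$- and right $\cU_2$-actions on the double side-change commute, the condition $\Phi((u\otimes\phi)\cdot\xi)=\Phi(u\otimes\phi)\cdot\xi$ reduces, after cancelling the common left factor $u$, to the single identity on generators
\[\iota(\phi)\cdot\xi=\bar\xi\cdot\iota(\phi)-\iota(\xi\cdot\phi)\qquad(\xi\in\cE_2,\ \bar\xi\in\cE_1\text{ its image}),\]
together with the $\O$-linearity of $\iota$ already noted. Three divergence-type contributions enter here: the negated Lie derivative $-\Lie_\xi\alpha$ coming from the right $\cU_2$-action on the $\omega_2$-factor of the double side-change; the Lie derivative $\Lie_{\bar\xi}\nu$ coming from the side-changed left $\cU_1$-action on the $\omega_1^\vee$-factor; and the scalar by which $\xi$ acts on the generator of $\omega_{1|2}$, which, unravelling the left $\cU_2$-structure on $\sheafHom_\O(\omega_1,\omega_2)$ of two right $\cU_2$-modules, is exactly the difference of these two. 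The content of the identity is that these contributions cancel. This is the relative analogue of the classical $\mathscr D_X$-module transposition $\omega_X\otimes_\O\mathscr D_X\otimes_\O\omega_X^\vee\cong\mathscr D_X$ (with the two ring structures interchanged), and tracking the signs so that the three divergences match is the main obstacle.

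Finally I would upgrade the bimodule morphism $\Phi$ to an isomorphism by passing to the associated graded for the PBW filtration of the $\cU_1$-factor. On the graded level $\gr\cU_1\cong\Sym_\O\cE_1$ is commutative, the order-lowering corrections in both the left $\cU_1$- and the right $\cU_2$-actions vanish, and $\gr\Phi$ becomes the obvious $\O$-linear identification $\Sym_\O\cE_1\otimes_\O\omega_1^\vee\otimes_\O\omega_2\cong\omega_2\otimes_\O\Sym_\O\cE_1\otimes_\O\omega_1^\vee$, moving $\alpha$ across the central factor $\Sym_\O\cE_1$. Since this is an isomorphism and the PBW filtration is exhaustive and bounded below, $\Phi$ itself is an isomorphism, which proves the claim. (The same conclusion is also subsumed by the properties of the relative dualizing module recorded in \cite[Appendix, A.28]{NarvaezDual}.)
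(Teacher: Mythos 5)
Your proposal is correct, but it takes a genuinely different route from the paper. The paper first treats the diagonal case $\cE_1=\cE_2$: it recognizes the map in question as the known automorphism of $\cU_1\otimes_\O\omega_1^\vee$ interchanging its two commuting left $\cU_1$-structures, $P\otimes m\mapsto P\cdot(1\otimes m)$ (citing \cite[Corollary A.12]{NarvaezDual}), so bijectivity and linearity come for free; it then reduces the general case to this one by factoring $\omega_2\cong\omega_1\otimes_\O\omega_{1|2}$ as right $\cU_2$-modules and shuffling factors with \cref{lem:leftRightInterchange}. You instead work directly in the relative setting: your $\Phi(u\otimes\phi)=u\cdot\iota(\phi)$ is essentially the same underlying map (it specializes to the interchange automorphism when $\cE_1=\cE_2$), but you verify the bimodule property by hand and get bijectivity from the PBW filtration rather than from the cited automorphism. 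Your key identity does hold: writing $\phi=\nu\otimes\alpha_2$, the Hom-module structure gives $\xi\cdot\phi=\Lie_{\bar\xi}\nu\otimes\alpha_2+\Lie_\xi\alpha_2\otimes\nu$, and then both $\iota(\phi)\cdot\xi$ and $\bar\xi\cdot\iota(\phi)-\iota(\xi\cdot\phi)$ equal $-\Lie_\xi\alpha_2\otimes 1\otimes\nu-\alpha_2\otimes\bar\xi\otimes\nu$, so the three Lie-derivative contributions cancel exactly as you predicted; this computation is routine but should be written out, since it is the entire content replacing the paper's citation. One small correction to your last step: on the associated graded, $\gr\Phi$ is the obvious identification only up to the sign $(-1)^p$ in degree $p$ (the side-changed left action of $\eta\in\cE_1$ has leading term $-u\eta$), which of course does not affect invertibility. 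The trade-off: your argument is self-contained (no appeal to the factorization of $\omega_2$ or to \cite{NarvaezDual}) at the cost of a sign-sensitive verification and a filtration argument; the paper's is shorter and structural, outsourcing the computation to a known result and gaining bijectivity automatically.
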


\begin{proof}
        We first consider the case $\cE_1=\cE_2$. Note that $\cU_1 \otimes_\O \omega_1^\vee$ is a left $\cU_1 \otimes \cU_1$-module, i.e., it carries two commuting left $\cU_1$-module structures: one via left-multiplication on $\cU_1$ and the other results from the right-to-left transformation $(\cdot) \otimes_\O \omega_1^\vee$ applied to the right $\cU_1$-module structure on $\cU_1$. Note that when we write $\omega_1 \otimes_\O \cU_1 \otimes_\O \omega^\vee$, we mean applying the left-to-right transformation $\omega_1 \otimes_\O (\cdot)$ to the \emph{first} mentioned left $\cU_1$-module structure. However, there is an automorphism of $\cU_1 \otimes_\O \omega_1^\vee$ interchanging the two left $\cU_1$-module structures, given by $P \otimes m \mapsto P \cdot (1 \otimes m)$. (See, e.g., \cite[Corollary~A.12]{NarvaezDual} for the analogous statement for right modules.) Hence, we may as well apply the left-to-right transformation $\omega_1 \otimes_\O (\cdot)$ to the \emph{second} mentioned left $\cU_1$-module structure that arose from a right-to-left transformation. Since the right-to-left and the left-to-right transformations are inverse to each other, this shows $\omega_1 \otimes_\O \cU_1 \otimes_\O \omega_1^\vee \cong \cU_1$.

  For the general case, we observe that $\omega_2 \cong \omega_1 \otimes_\O \omega_{1|2}$ as right $\cU_2$-modules.
      Therefore, as right $\cU_2$-modules, we obtain
  \[     \omega_2 \otimes_\O \cU_1 \otimes_\O \omega_1^\vee
    \cong (\omega_1 \otimes_\O \omega_{1|2}) \otimes_\O (\cU_1 \otimes_\O \omega_1^\vee)
    \cong (\omega_1 \otimes_\O \cU_1 \otimes_\O \omega_1^\vee) \otimes_\O \omega_{1|2}
    \cong \cU_1 \otimes_\O \omega_{1|2}
  \]  and we observe that these isomorphisms are also compatible with the left $\cU_1$-module structure, giving the claimed isomorphism of $(\cU_1,\cU_2)$-bimodules.
\end{proof}

More generally, we get:

\begin{lem} \label{lem:doubleSideChangingModuleVersion}
  Let $\cE_2 \to \cE_1$ be as above and let $\cM$ be a left $\cU_2$-module.
    Then there is an isomorphism of $(\cU_1,\cU_2)$-bimodules
  \[\omega_2 \otimes_\O (\cM \otimes_{\cO} \cU_1) \otimes_\O \omega_1^\vee \cong \cU_1 \otimes_\O \cM \otimes_\cO \omega_{1|2},\]
  where $\cU_1$ is on the left viewed as a $(\cU_2, \cU_1)$-bimodule and on the right as a $(\cU_1, \cU_2)$-bimodule.
\end{lem}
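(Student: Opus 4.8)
The plan is to reduce the claim to the special case $\cM = \O_X$, which is precisely \cref{lem:doubleSideChangingReinterpreted}, by first pulling the factor $\cM$ out of the double side-changing operation. Recall that $\cM \otimes_\O \cU_1$ carries the $(\cU_2,\cU_1)$-bimodule structure in which the right $\cU_1$-action is right multiplication on the second factor and the left $\cU_2$-action is the diagonal one, combining the given action on $\cM$ with the left-multiplication action on $\cU_1$ pulled back along $\cU_2 \to \cU_1$. This is exactly the shape $\cM' \otimes_\O \cN$ with $\cM' = \cM$ a left $\cU_2$-module and $\cN = \cU_1$ regarded as a $(\cU_2,\cU_1)$-bimodule, so the second isomorphism of \cref{lem:leftRightInterchange} applies.

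First I would apply that isomorphism to obtain, as $(\cU_1,\cU_2)$-bimodules,
\[\omega_2 \otimes_\O (\cM \otimes_\O \cU_1) \otimes_\O \omega_1^\vee \;\cong\; (\omega_2 \otimes_\O \cU_1 \otimes_\O \omega_1^\vee) \otimes_\O \cM.\]
Next I would feed in \cref{lem:doubleSideChangingReinterpreted}, which identifies $\omega_2 \otimes_\O \cU_1 \otimes_\O \omega_1^\vee \cong \cU_1 \otimes_\O \omega_{1|2}$ as $(\cU_1,\cU_2)$-bimodules; tensoring this isomorphism on the right with the left $\cU_2$-module $\cM$ over $\O$ is functorial and preserves the $(\cU_1,\cU_2)$-bimodule structure, since the left $\cU_1$-action is untouched and tensoring a right $\cU_2$-module with a left $\cU_2$-module over $\O$ again yields a right $\cU_2$-module. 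This produces $(\cU_1 \otimes_\O \omega_{1|2}) \otimes_\O \cM$. Finally I would transpose the last two tensor factors to reach the asserted form $\cU_1 \otimes_\O \cM \otimes_\O \omega_{1|2}$.

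The main point to check — and the only step that is not purely formal — is that this last transposition of $\omega_{1|2}$ and $\cM$ is compatible with both bimodule structures. The left $\cU_1$-structure lives entirely on the $\cU_1$-factor and is insensitive to the swap. For the right $\cU_2$-structure, both $\omega_{1|2}$ and $\cM$ enter as left $\cU_2$-modules converted into the diagonal right action, and the symmetry isomorphism of $\otimes_\O$ intertwines the two orderings precisely because the diagonal $\cU_2$-action (anchor via the Leibniz rule, bracket via the Lie action) is symmetric in these two left-module factors. I expect the bookkeeping here — keeping track of which $\O$-structure each tensor product uses and verifying the signs in the Leibniz terms — to be the only genuine obstacle, and it is routine in the style of \cref{lem:leftRightInterchange} and \cref{lem:doubleSideChangingReinterpreted}.
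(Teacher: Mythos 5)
Your proof is correct and follows exactly the paper's argument: apply the second isomorphism of \cref{lem:leftRightInterchange} with $\cN = \cU_1$, then substitute \cref{lem:doubleSideChangingReinterpreted}. The only difference is cosmetic — the paper stops at $\cU_1 \otimes_\O \omega_{1|2} \otimes_\O \cM$ and leaves the final transposition of the two left $\cU_2$-module factors implicit, whereas you spell out why that swap respects both module structures.
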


\begin{proof}
  We have
  \[\omega_2 \otimes_\O (\cM \otimes_{\cO} \cU_1) \otimes_\O \omega_1^\vee \cong
  (\omega_2 \otimes_\O \cU_1 \otimes_\O \omega_1^\vee) \otimes_\O \cM
  \cong \cU_1 \otimes_\O \omega_{1|2} \otimes_\O \cM,\]
  where the first isomorphism is from \cref{lem:leftRightInterchange} and the second one from \cref{lem:doubleSideChangingReinterpreted}.
\end{proof}

The following is a consequence of the (self-)duality result for Chevalley-Eilenberg complexes (i.e. \cref{lem:dualityGeneralized}). We would like to point out that the duality result from \cite[Theorem A.32]{NarvaezDual} expresses the duality functor $\mathbb{D}_{\cU_1}(-)$ using
$\mathbb{D}_{\cU_2}(-)$ (and is therefore rather a extension of scalar type property) whereas our result expresses $\mathbb{D}_{\cU_1}(-)$ using $R\!\sheafHom_{\O_X}(-,\O_X)$.
\begin{prop} \label{prop:dualOfScalarExtension}
Let $\mathcal M$ be a left $\mathcal U_2$-module that is coherent as $\O_X$-module. Let $\cE_2 \to \cE_1$ be a Lie algebroid morphism, inducing a homomorphism $\mathcal U_2 \to \mathcal U_1$ that makes $\cU_1$ a $\cU_2$-algebra.
Then
\[\mathbb D_{\mathcal U_1}(\mathcal U_1 \otimes_{\mathcal U_2}^\mathbb{L} \mathcal M)  \cong
\mathcal U_1 \otimes_{\mathcal U_2}^\mathbb{L} (R\!\sheafHom_{\O_X}(\mathcal M,\O_X) \otimes_{\O_X} \omega_{1|2}) [m_1-m_2]\]
in the derived category of left $\mathcal U_1$-modules.
\end{prop}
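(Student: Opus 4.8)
The plan is to reduce everything to the self-duality of Chevalley--Eilenberg complexes recorded in \cref{lem:dualityProperty}, applied to a well-chosen bimodule. First I would rewrite the object to be dualized: by the chain of isomorphisms established just before \cref{rem:LogDiv} (which combines \cref{lem:ComplexSRepresents} and \cref{lem:HTTanalogue}), the relative tensor product $\mathcal U_1 \otimes_{\mathcal U_2}^{\mathbb L} \mathcal M$ is represented by $(\mathcal U_1 \otimes_{\O} \mathcal M) \otimes_{\mathcal U_2}^{\mathbb L} \O$, where $\mathcal N := \mathcal U_1 \otimes_{\O} \mathcal M$ carries the natural $(\mathcal U_1,\mathcal U_2)$-bimodule structure (left multiplication on $\mathcal U_1$, and right $\mathcal U_2$-action $(u \otimes m)\cdot \xi = u\bar\xi \otimes m - u \otimes \xi m$ for $\xi \in \mathcal E_2$ with image $\bar\xi \in \mathcal U_1$). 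Applying \cref{lem:dualityProperty} to $\mathcal N$ then gives
\[\mathbb D_{\mathcal U_1}(\mathcal U_1 \otimes_{\mathcal U_2}^{\mathbb L} \mathcal M) \cong \bigl(\omega_2 \otimes_{\O} R\!\sheafHom_{\mathcal U_1}(\mathcal N, \mathcal U_1) \otimes_{\O} \omega_1^\vee\bigr) \otimes_{\mathcal U_2}^{\mathbb L} \O \,[m_1 - m_2],\]
so the whole problem is reduced to computing the inner term $R\!\sheafHom_{\mathcal U_1}(\mathcal N,\mathcal U_1)$ together with its $(\mathcal U_2,\mathcal U_1)$-bimodule structure.

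The key computation is the identification of $R\!\sheafHom_{\mathcal U_1}(\mathcal U_1 \otimes_{\O} \mathcal M, \mathcal U_1)$ with $R\!\sheafHom_{\O}(\mathcal M,\O) \otimes_{\O} \mathcal U_1$ as $(\mathcal U_2,\mathcal U_1)$-bimodules, where $\mathcal M^\vee := R\!\sheafHom_{\O}(\mathcal M,\O)$ carries its natural left $\mathcal U_2$-structure (it is the internal $\sheafHom$ of the two left $\mathcal U_2$-modules $\mathcal M$ and $\O$, the latter being a left $\mathcal U_2$-module through the anchor map), and the right-hand side is equipped with the diagonal left $\mathcal U_2$-action and the right $\mathcal U_1$-action by right multiplication. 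Since $\mathcal U_1 \otimes_\O \mathcal M$ is the module induced along $\O \to \mathcal U_1$, the adjunction isomorphism $\sheafHom_{\mathcal U_1}(\mathcal U_1 \otimes_\O \mathcal M, \mathcal U_1) \cong \sheafHom_\O(\mathcal M, \mathcal U_1)$ holds, and I would check directly that under it the left $\mathcal U_2$-action inherited from the right $\mathcal U_2$-action on $\mathcal N$ passes to the diagonal action: for $\xi \in \mathcal E_2$ one finds $(\xi\cdot\psi)(m) = \bar\xi\,\psi(m) - \psi(\xi m)$ which, after expanding $\bar\xi\,(f P) = f \bar\xi P + Z(\xi)(f) P$ in $\mathcal U_1$, matches the diagonal action $(\xi\cdot\alpha)\otimes P + \alpha \otimes \bar\xi P$ on $\mathcal M^\vee \otimes_\O \mathcal U_1$. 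To make this derived I would use that $X$ is smooth and $\mathcal M$ is $\O$-coherent, hence $\O$-perfect: a finite locally free $\O$-resolution of $\mathcal M$ computes $R\!\sheafHom_{\mathcal U_1}(\mathcal N,\mathcal U_1)$ as a right $\mathcal U_1$-module, and since $\mathcal U_1$ is $\O$-flat this yields $R\!\sheafHom_{\O}(\mathcal M,\O)\otimes_\O \mathcal U_1$, with the $\mathcal U_2$-equivariance supplied by naturality of the adjunction.

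With this identification I would substitute and conclude using \cref{lem:doubleSideChangingModuleVersion}. The inner term becomes $\omega_2 \otimes_\O (\mathcal M^\vee \otimes_\O \mathcal U_1) \otimes_\O \omega_1^\vee$, which is exactly the left-hand side of \cref{lem:doubleSideChangingModuleVersion} with the left $\mathcal U_2$-module there taken to be the complex $\mathcal M^\vee$; that lemma rewrites it as the $(\mathcal U_1,\mathcal U_2)$-bimodule $\mathcal U_1 \otimes_\O \mathcal M^\vee \otimes_\O \omega_{1|2} = \mathcal U_1 \otimes_\O (\mathcal M^\vee \otimes_\O \omega_{1|2})$ with its standard induced structure. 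Applying once more the chain of isomorphisms used at the outset (i.e.\ \cref{lem:ComplexSRepresents} and \cref{lem:HTTanalogue}) to the left $\mathcal U_2$-module $\mathcal M^\vee \otimes_\O \omega_{1|2}$ gives
\[\bigl(\mathcal U_1 \otimes_\O (\mathcal M^\vee \otimes_\O \omega_{1|2})\bigr) \otimes_{\mathcal U_2}^{\mathbb L}\O \cong \mathcal U_1 \otimes_{\mathcal U_2}^{\mathbb L} (\mathcal M^\vee \otimes_\O \omega_{1|2}),\]
and keeping track of the shift $[m_1-m_2]$ produces precisely the asserted isomorphism.

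The main obstacle is the middle step: verifying that the natural adjunction isomorphism $R\!\sheafHom_{\mathcal U_1}(\mathcal U_1 \otimes_\O \mathcal M, \mathcal U_1) \cong R\!\sheafHom_\O(\mathcal M,\O)\otimes_\O\mathcal U_1$ is not merely an isomorphism of right $\mathcal U_1$-modules but respects the left $\mathcal U_2$-structures (the diagonal action on the right), and that this holds in the derived category even though $\mathcal M$ is only $\O$-coherent and not $\O$-locally free, so that the $\O$-resolution used to compute the right $\mathcal U_1$-module structure is not itself $\mathcal U_2$-equivariant. The remaining manipulations are formal consequences of the lemmas already established, the only care being the bookkeeping of which $\O$-module structure is used in each tensor factor and the sign conventions underlying \cref{lem:dualityGeneralized}.
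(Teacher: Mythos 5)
Your proposal is correct and follows essentially the same route as the paper's proof: rewrite $\mathcal U_1\otimes_{\mathcal U_2}^{\mathbb L}\mathcal M$ as $(\mathcal U_1\otimes_\O\mathcal M)\otimes_{\mathcal U_2}^{\mathbb L}\O$, apply the self-duality statement \cref{lem:dualityProperty}, identify $R\!\sheafHom_{\mathcal U_1}(\mathcal U_1\otimes_\O\mathcal M,\mathcal U_1)$ with $R\!\sheafHom_\O(\mathcal M,\O)\otimes_\O\mathcal U_1$ via adjunction and $\O$-coherence, and conclude with \cref{lem:doubleSideChangingModuleVersion} and \cref{lem:HTTanalogue}. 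The one point where the paper is cleaner is precisely the obstacle you flag at the end: rather than trying to transport the $\mathcal U_2$-structure through a (non-equivariant) locally free $\O$-resolution, the paper first constructs the natural morphism $R\!\sheafHom_\O(\mathcal M,\O)\otimes_\O\mathcal U_1 \to R\!\sheafHom_\O(\mathcal M,\mathcal U_1)$ directly in the derived category of $(\mathcal U_2,\mathcal U_1)$-bimodules (no choice of resolution enters its definition), and only then checks that it is an isomorphism after forgetting down to the derived category of $\O$-modules, where $\O$-coherence of $\mathcal M$ applies; since isomorphy can be tested after the forgetful functor, this sidesteps the equivariance-of-resolutions issue entirely and would close the gap in your write-up.
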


\begin{proof}
Note that there is a natural morphism
\[R\!\sheafHom_{\O}(\mathcal M, \O) \otimes_{\O} \mathcal U_1 \to R\!\sheafHom_\O(\mathcal M, \mathcal U_1)\]
in the derived category of $(\mathcal U_2,\mathcal U_1)$-bimodules. This is an isomorphism, as can be checked on the level of the underlying objects in the derived category of $\O$-modules---there, this is a consequence of the $\O$-coherence assumption on $\mathcal M$. Now, we have natural isomorphisms
  \begin{align*}
    &\mathbb D_{\mathcal U_1}(\mathcal U_1 \otimes_{\mathcal U_2}^\mathbb{L} \mathcal M) & \\
    &\cong \mathbb D_{\mathcal U_1}((\mathcal U_1 \otimes_\O^\mathbb{L} \mathcal M) \otimes_{\mathcal U_2}^\mathbb{L} \O) &\text{(\cref{lem:HTTanalogue})} \\
    &\cong (\omega_2 \otimes_\O^\mathbb{L} R\!\sheafHom_{\cU_1}(\cU_1 \otimes_{\cO}^\mathbb{L} \mathcal M,\mathcal U_1) \otimes_{\cO}^\mathbb{L} \omega_1^\vee) \otimes_{\mathcal U_2}^\mathbb{L} \O [m_1-m_2] &\text{(\cref{lem:dualityProperty})} \\
    &\cong (\omega_2 \otimes_\O^\mathbb{L} R\!\sheafHom_{\mathcal \O}(\mathcal M,\mathcal U_1) \otimes_{\cO}^\mathbb{L} \omega_1^\vee) \otimes_{\mathcal U_2}^\mathbb{L} \O [m_1-m_2] &\text{(tensor-hom adjunction)} \\
    &\cong (\omega_2 \otimes_\O^\mathbb{L} (R\!\sheafHom_{\mathcal \O}(\mathcal M,\mathcal \O) \otimes_{\O}^\mathbb{L} \mathcal U_1) \otimes_{\cO}^\mathbb{L} \omega_1^\vee) \otimes_{\mathcal U_2}^\mathbb{L} \O [m_1-m_2] &\text{($\O$-coherence of $\mathcal M$)} \\
    &\cong (\mathcal U_1 \otimes_{\O}^\mathbb{L} R\!\sheafHom_{\mathcal \O}(\mathcal M,\O) \otimes_\O^\mathbb{L} \omega_{1|2}) \otimes_{\mathcal U_2}^\mathbb{L} \O [m_1-m_2] &\text{(\cref{lem:doubleSideChangingModuleVersion})} \\
    &\cong \mathcal U_1 \otimes_{\mathcal U_2}^\mathbb{L} (R\!\sheafHom_{\mathcal \O}(\mathcal M,\mathcal \O) \otimes_{\O} \omega_{1|2})[m_1-m_2] &\text{(\cref{lem:HTTanalogue})} & \qedhere
  \end{align*}

\end{proof}

Taking $\mathcal U_1 = \mathcal U_2$, this leads to the following:

\begin{cor}
      Let $\mathcal M$ be an $\O_X$-coherent left $\mathcal U$-module.
  Then
  \[\mathbb{D}_{\mathcal U}(\mathcal M) \cong R\!\sheafHom_{\O_X}(\mathcal M,\O_X)\]
  in the derived category of left $\mathcal U$-modules.
\end{cor}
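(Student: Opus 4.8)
The plan is to deduce this directly from \cref{prop:dualOfScalarExtension} by specializing to $\cE_1 = \cE_2 = \cE$ and taking the Lie algebroid morphism $\cE_2 \to \cE_1$ to be the identity, so that $\cU_1 = \cU_2 = \cU$ and $m_1 = m_2 = m$. With these choices the degree shift $[m_1 - m_2]$ appearing in the proposition becomes trivial, and the extension-of-scalars functor $\cU_1 \otimes_{\cU_2}^{\mathbb L}(-) = \cU \otimes_\cU^{\mathbb L}(-)$ is naturally isomorphic to the identity functor on the derived category of left $\cU$-modules (as $\cU$ is flat over itself). In particular, the left-hand side $\mathbb D_{\cU_1}(\cU_1 \otimes_{\cU_2}^{\mathbb L} \cM)$ of the proposition reduces to $\mathbb D_{\cU}(\cM)$.

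First I would simplify the relative dualizing module. When $\cE_1 = \cE_2$ and the morphism is the identity, we have $\omega_1 = \omega_2 = \omega_\cE = \bigwedge_\O^m \cE^\vee$ as right $\cU$-modules, so
\[\omega_{1|2} = \sheafHom_\O(\omega_1, \omega_2) = \sheafHom_\O(\omega_\cE, \omega_\cE).\]
Since $\cE$ is locally free of finite rank, $\omega_\cE$ is an invertible $\O_X$-module, and therefore the section $a \mapsto a \cdot \id_{\omega_\cE}$ gives a canonical isomorphism $\O_X \xrightarrow{\sim} \sheafHom_\O(\omega_\cE,\omega_\cE) = \omega_{1|2}$ of $\O_X$-modules.

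Next I would feed this into the right-hand side of \cref{prop:dualOfScalarExtension}. Using $\omega_{1|2} \cong \O_X$ together with the fact that $\O_X$ (with its canonical left $\cU$-structure) is the unit object for $\otimes_{\O_X}$ among left $\cU$-modules, the factor $R\!\sheafHom_{\O_X}(\cM,\O_X) \otimes_{\O_X} \omega_{1|2}$ becomes simply $R\!\sheafHom_{\O_X}(\cM,\O_X)$. Applying the (trivial) extension of scalars $\cU \otimes_\cU^{\mathbb L}(-)$ and the vanishing shift then yields $R\!\sheafHom_{\O_X}(\cM,\O_X)$ on the nose. Combining this with the simplification of the left-hand side gives the asserted isomorphism $\mathbb D_{\cU}(\cM) \cong R\!\sheafHom_{\O_X}(\cM,\O_X)$.

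The main point requiring care — the only step beyond formal bookkeeping — is to verify that the trivialization $\O_X \xrightarrow{\sim} \omega_{1|2}$ is an isomorphism of \emph{left $\cU$-modules}, not merely of $\O_X$-modules. Here $\O_X$ carries its canonical left $\cU$-structure, in which $\xi \in \cE$ acts by the derivation $Z(\xi)$, while $\omega_{1|2} = \sheafHom_\O(\omega_\cE, \omega_\cE)$ carries the left $\cU$-structure induced by the two (here coinciding) right $\cU$-structures on source and target. One checks that $a \mapsto a \cdot \id_{\omega_\cE}$ is $\cU$-linear: the induced $\cE$-action on $\id_{\omega_\cE}$ is the difference of the source and target actions and hence vanishes, so $\id_{\omega_\cE}$ transforms exactly as the unit $1 \in \O_X$ does. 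This is precisely the degenerate case $\omega_1 = \omega_2$ of the identification $\omega_2 \cong \omega_1 \otimes_\O \omega_{1|2}$ used in the proof of \cref{lem:doubleSideChangingReinterpreted}. Granting this compatibility, all remaining identifications are the formal ones recorded above.
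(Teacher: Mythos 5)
Your proof is correct and takes essentially the same route as the paper: the paper's own proof specializes \cref{prop:dualOfScalarExtension} to $\cU_1 = \cU_2$ and cites \cref{lem:doubleSideChangingReinterpreted} for exactly the identification $\omega_{1|2} \cong \O_X$ as left $\cU$-modules that you supply. Your hands-on check that $a \mapsto a \cdot \id_{\omega_{\mathcal E}}$ is $\cU$-linear (because the induced $\cE$-action kills $\id_{\omega_{\mathcal E}}$) is precisely the degenerate case $\cE_1 = \cE_2$ of that lemma, as you yourself observe, so no gap remains.
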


\begin{proof}
  This follows immediately from \cref{prop:dualOfScalarExtension} and \cref{lem:doubleSideChangingReinterpreted}.
\end{proof}

This corollary is a generalization of \cite[Proposition 3.2.1]{Che99}, in the sense that in loc.\ cit.\ it is assumed that $\cM$ is $\cO_X$-locally free, whereas we only need $\cO_X$-coherence.
Note that for $\mathcal U = \mathscr D_X$, assuming $\cM$ to be $\cO_X$-coherent obviously implies that it is an integrable connection, in which case $\mathbb D(\mathcal M) \cong \sheafHom_{\O_X}(\mathcal M, \O_X)$ (see, e.g., \cite[Example 2.6.10.]{Hotta}). However, for other Lie algebroids, there are cases of $\O$-coherent left $\mathcal U$-modules $\mathcal M$ which are not $\cO$-locally free, and, consequently, for which $\mathbb D_{\mathcal U}(\mathcal M)$ is not a single $\mathcal U$-module in cohomological degree~$0$.

\section{Equivariant aspects}
\label{sec:EquHol}

In this section, we study group actions on smooth varieties which determines a morphism of Lie algebroids as in \cref{ex:AModAsLieAlgebroid}. The results from \cref{sec:LieAlg} can then be made more explicit. Along the way, we consider equivariance properties of the complexes involved and deduce a holonomicity criterion.

\subsection{Holonomicity}

Throughout, we consider the action of a connected linear algebraic group $G$ on a smooth connected variety $X$. We introduce a complex $\cC^\bullet(\cM,\beta)$ of $\mathscr D_X$-modules associated to a $G$-equivariant quasi-coherent $\O_X$-module $\cM$ and a Lie algebra homomorphism $\beta \colon \mathfrak g \to \C$, where $\mathfrak g$ denotes the Lie algebra of $G$.

For this, we consider the algebra $\cA_X = \O_X \otimes U(\mathfrak g)$ as in \cref{ex:AModAsLieAlgebroid} and in \cref{lem:ComparisonLieAlgHom}. We view the $G$-equivariant quasi-coherent $\O_X$-module $\cM$ as a left $\cA_X$-module through the $\mathfrak g$-action obtained by differentiating. Explicitly, the $G$-equivariance of $\cM$ is given by an isomorphism $\varphi \colon \act^* \cM \to \pr_2^* \cM$, where $\act \colon G \times X \to X$ is given by $(g,y) \mapsto g \cdot y$. Any $\xi \in \mathfrak g$ corresponds to a morphism $i_\xi \colon \Spec \C[\varepsilon]/\varepsilon^2 \to G$. Then pulling back $\varphi$ under $i_\xi \times \id_X \colon \Spec \C[\varepsilon]/\varepsilon^2 \times X \to G \times X$ yields an isomorphism
\[(i_\xi \times \id_X)^*\varphi \colon \C[\varepsilon]/\varepsilon^2 \otimes \cM \xrightarrow{\cong} \C[\varepsilon]/\varepsilon^2 \otimes \cM.\]
The image of $1 \otimes m$ under this isomorphism is of the form $1 \otimes m - \varepsilon \otimes m'$. Then $\xi \cdot m := m'$ defines the action of $\mathfrak g$ on $\cM$ which gives the $\cA_X$-module structure on $\cM$.

In the following, let $G$ be a connected linear algebraic group with Lie group $\mathfrak g$, let $X$ be a connected smooth $G$-variety and let $\mathcal M$ be a $G$-equivariant quasi-coherent $\O_X$-module. Let $\beta \colon \mathfrak g \to \C$ be a Lie algebra homomorphism.

\begin{nota}
  For  connected linear algebraic group $G$ acting on a smooth complex variety $X$, a Lie algebra homomorphism $\beta \colon \mathfrak g \to \C$ and any left $\cA_X$-module $\cN$, we denote
  \[\O_X\{\beta\} := \cA_X/\cA_X(\xi - \beta(\xi) \mid \xi \in \mathfrak g) \qquad \text{and} \qquad \cN\{\beta\} := \cN \otimes_{\O_X} \O_X\{\beta\}. \qedhere\]
\end{nota}

\begin{dfn}
  For a $G$-equivariant quasi-coherent $\O_X$-module $\cM$ (which we may view as a left $\cA_X$-module) and Lie algebra homomorphism $\beta \colon \mathfrak g \to \C$, we denote
  \[\cC^\bullet(\cM,\beta):=\cS^\bullet_{\mathscr D_X | \cA_X}(\mathscr D_X \otimes_{\cO_X} \cM\{\beta\}),\]
  where $\mathscr D_X \otimes_{\O_X} \cM\{\beta\}$ is a right $\cA_X$-module by combining the right-multiplication on $\mathscr D_X$ through $\cA_X \to \mathscr D_X$ and the left $\cA_X$-module structure on $\cM\{\beta\}$ resulting from the $G$-equivariance.
  \end{dfn}

Then $\cC^\bullet(\cM,\beta)$ represents $\mathscr D_X \otimes_{\cA_X}^\mathbb{L} \cM\{\beta\}$.

\begin{prop}
  The cohomologies of $\cC^\bullet(\cM,\beta)$ are $\beta$-twisted strongly equivariant $\mathscr D_X$-modules.
\end{prop}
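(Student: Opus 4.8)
The plan is to upgrade the statement from the level of cohomology to the level of the whole complex: I would first equip $\cC^\bullet(\cM,\beta)$ with a structure of a complex of $\beta$-twisted strongly equivariant $\mathscr D_X$-modules (terms \emph{and} differentials), and then invoke the general fact that $\beta$-twisted strongly equivariant $\mathscr D_X$-modules form a full abelian subcategory of $\mathscr D_X$-modules which is stable under subobjects and quotients. The latter holds because strong equivariance is a descent datum along the two smooth maps $\act,\pr_2\colon G\times X\to X$ (an isomorphism $\act^*\cN\cong\pr_2^*\cN$ of $\mathscr D_{G\times X}$-modules, suitably $\beta$-twisted, satisfying the cocycle condition), and the forgetful functor to $\mathscr D_X$-modules is exact and faithful. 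Once this is in place, each $H^i\cC^\bullet(\cM,\beta)$ is a subquotient of a $\beta$-twisted strongly equivariant module and hence is itself one.

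For the terms, recall that $\cC^{-\ell}(\cM,\beta)\cong \mathscr D_X\otimes_{\O_X}\cM\{\beta\}\otimes_{\C}\bigwedge^\ell\mathfrak g$ (using \cref{lem:ComparisonLieAlgHom}). Here $\cM\{\beta\}$ is the $G$-equivariant $\O_X$-module $\cM$ with its infinitesimal action shifted by the character $\beta$ (i.e.\ $\xi$ acts as $\xi_{\cM}+\beta(\xi)$), so it is a $\beta$-twisted equivariant $\O_X$-module. Induction $\mathscr D_X\otimes_{\O_X}(-)$ sends $\beta$-twisted equivariant $\O_X$-modules to $\beta$-twisted strongly equivariant $\mathscr D_X$-modules: the required descent isomorphism is built from the equivariance isomorphism $\varphi\colon\act^*\cM\to\pr_2^*\cM$ together with the tautological one for $\mathscr D_X$, and its cocycle condition follows from that of $\varphi$. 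Tensoring over $\C$ with the finite-dimensional $G$-representation $\bigwedge^\ell\mathfrak g$, endowed with the \emph{adjoint} action, preserves equivariance. The one point to emphasize is that $\bigwedge^\bullet\mathfrak g$ must carry the adjoint action for this to be globally coherent; this is exactly the input that makes the bracket terms of $\delta$ equivariant.

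It then remains to check that the differential $\delta^{-\ell}$ is a morphism of $\beta$-twisted strongly equivariant $\mathscr D_X$-modules, i.e.\ that it commutes with the descent isomorphisms on the terms. The differential has two pieces: the anchor/action piece $n\otimes\xi_1\wedge\cdots\mapsto\sum_i(-1)^{i+1}(n\cdot\xi_i)\otimes\cdots$, which is compatible with the descent data because the $\cA_X$-action (equivalently the infinitesimal $\mathfrak g$-action) is intertwined with the $G$-action through $\Ad$, and the bracket piece involving $[\xi_i,\xi_j]$, which is compatible because the Lie bracket is $G$-equivariant for the adjoint action. Infinitesimally, this is precisely the standard $G$-equivariance of the Chevalley--Eilenberg differential of a $G$-module, transported to the present relative setting; I would verify it by unwinding the descent isomorphisms on $\act^*$ and $\pr_2^*$ and reducing to the equivariance of $\varphi$, of the anchor $Z_X$, and of $[\cdot,\cdot]$.

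Combining these, $\cC^\bullet(\cM,\beta)$ is a complex in the abelian category of $\beta$-twisted strongly equivariant $\mathscr D_X$-modules, whence so are its cohomologies. The main obstacle is purely bookkeeping: one must carry along simultaneously the left $\mathscr D_X$-structure, the right $\cA_X$-structure used to define $\delta$, the $\beta$-twist sitting inside $\O_X\{\beta\}$, and the adjoint action on $\bigwedge^\bullet\mathfrak g$, and confirm that the descent isomorphisms on the terms commute with $\delta$ and satisfy the cocycle condition. This is where the twist by $\beta$ and the sign conventions in $\delta^{-\ell}$ genuinely interact and must be tracked with care; once these compatibilities are settled, the passage to cohomology is formal.
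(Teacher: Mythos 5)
There is a genuine gap, and it sits at the very first step of your plan: the terms of $\cC^\bullet(\cM,\beta)$ are \emph{not} $\beta$-twisted strongly equivariant $\mathscr D_X$-modules, so the complex cannot be promoted to a complex in that abelian category. Induction $\mathscr D_X \otimes_{\O_X}(-)$ applied to an equivariant $\O_X$-module produces only a \emph{weakly} (quasi-)equivariant $\mathscr D_X$-module: the descent isomorphism you build from $\varphi$ and ``the tautological one for $\mathscr D_X$'' is linear over $\O_G \boxtimes \mathscr D_X$ (i.e.\ $\mathscr D$-linear only in the $X$-direction), but strong equivariance demands $\mathscr D_{G\times X}$-linearity, which fails. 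Concretely, on $\mathscr D_X \otimes_{\O_X}\cM\{\beta\}$ the $\mathfrak g$-action coming from the left $\mathscr D_X$-structure (left multiplication by $Z_X(\xi)$) and the $\mathfrak g$-action obtained by differentiating the equivariance structure differ exactly by the right $\cA_X$-action used to define $\delta$; strong equivariance would force this difference to vanish, and it does not. A counterexample is already contained in the paper's own proof: take $X = G$ with the translation action, $\cM = \O_G$, $\beta = 0$. Then the degree-zero term of $\cC^\bullet(\O_G,0)$ is $\mathscr D_G$, while strongly equivariant $\mathscr D_G$-modules for this (simply transitive) action are, by descent, precisely the trivial connections $\O_G \otimes_{\C} W$; $\mathscr D_G$ is not of this form (it has no nonzero $\O$-coherent $\mathscr D$-submodules). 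Yet the single cohomology of this complex, $\O_G$, \emph{is} strongly equivariant --- showing that strong equivariance genuinely emerges only on cohomology and cannot be read off termwise. Tensoring with $\bigwedge^{\ell}\mathfrak g$ carrying the adjoint action does not help, since the failure already occurs at $\ell = 0$.

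This is exactly why the paper's proof works in the derived category instead of termwise: it constructs the isomorphism $\act^+ \cC^\bullet(\cM,\beta) \cong \O_G^\beta \boxtimes \cC^\bullet(\cM,\beta)$ only in $D^b_{qc}(\mathscr D_{G\times X})$, using that $\cC^\bullet(\O_G,\alpha)$ resolves $\O_G^\alpha$, the automorphism $\psi(g,y) = (g, g\cdot y)$, and the adjoint twist $\chi(\xi_1,\xi_2) = (\xi_1, \Ad^{-1}(\xi_1-\xi_2))$; exactness of $\act^+$ and $\O_G^\beta \boxtimes (-)$ then converts this quasi-isomorphism into the required isomorphisms on each cohomology. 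If you want to salvage a termwise strategy, the correct repair is to record only \emph{weak} equivariance of the terms and of $\delta$ (so the cohomologies are weakly equivariant $\mathscr D_X$-modules), and then to prove separately that the obstruction to strengthening --- the right action of $\xi \in \mathfrak g$ through $\cA_X$ on $\mathscr D_X \otimes_{\O_X}\cM\{\beta\}$ --- induces zero on cohomology, e.g.\ by exhibiting a contraction null-homotopy as in the Euler--Koszul theory of \cite{MMW}. That is a viable alternative proof, but it is an additional essential step, not the bookkeeping your proposal describes.
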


\begin{proof}
  By definition, $\beta$-twisted strong equivariance of   $H^k\cC(\cM,\beta)$
  means for $\act \colon G \times X \to X$, $(g,y) \mapsto g \cdot y$ the existence of an isomorphism of $\mathscr D_{G \times X}$-modules
  \[\act^+ H^k \cC^\bullet(\cM,\beta) \xrightarrow{\cong } \O_G^\beta \boxtimes H^k\cC^\bullet(\cM,\beta)\]
  satisfying a cocycle condition. By exactness of $\act^+ (\cdot)$ and $\O_G^\beta \boxtimes (\cdot)$, it suffices to construct an isomorphism $\act^+ \cC^\bullet(\cM,\beta) \xrightarrow{\cong} \O_G^\beta \boxtimes \cC^\bullet(\cM,\beta)$ in the derived category $D_{qc}^b(\mathscr D_{G \times X})$.

  For a Lie algebra homomorphism $\alpha \colon \mathfrak g \to \C$, the complex of $\mathscr D_G$-modules $\cC^\bullet(\O_G,\alpha)$ is a resolution of $\O_G^\alpha$, since $Z_G \colon \O_G \otimes \mathfrak g \to \Theta_G$ and thus $\tilde Z_G \colon \cA_G \to \mathscr D_G$ is an isomorphism. Hence, in $D_{qc}^b(\mathscr D_X)$, we have
  \[\O_G^\alpha \boxtimes \cC^\bullet(\cM,\beta) \cong \Tot\big(\cC^\bullet(\O_G,\alpha) \boxtimes \cC^\bullet(\cM,\beta)\big) = \cC^\bullet(\O_G \boxtimes \cM,\alpha\oplus \beta),\]
    where $\cC^\bullet(\O_G,\alpha)$ and $\cC^\bullet(\cM,\beta)$ are based on the $G$-action on $G$ (by left-multiplication) and on $X$, respectively, and $\cC^\bullet(\O_G \boxtimes \cM,\alpha\oplus \beta)$ is based on to the component-wise $G \times G$-action on $G \times X$.
  Notice that the last equality follows from the definition of $\cC^\bullet$ using that the Chevalley--Eilenberg complexes on a product $X_1 \times X_2$ with respect to the Lie algebroid $\cE = \pr_1^*\cE_1 \oplus \pr_2^* \cE_2$ satisfy
  \[\cS^\bullet_{\cR | \cU(\cE)}(\cN_1 \boxtimes \cN_2)
  = \Tot^\bullet(\cS_{\cR | \cU(\cE_1)}(\cN_1) \boxtimes \cS_{\cR | \cU(\cE_2)}(\cN_2))\]
  since
  \[\bigwedge_\cO^k (\cN_1 \boxtimes \cN_2) \otimes_\O \cE = \bigoplus_{p+q=k} (\cN_1 \otimes_\cO \bigwedge_\cO^p \cE_1) \boxtimes (\cN_2 \otimes_\cO \bigwedge_\cO^q \cE_2)\]
  and by observing compatibility of the differential.

  For
  \[\psi \colon G \times X \xrightarrow{\cong} G \times X, \qquad (g,y) \mapsto (g,g \cdot y),\]
  we have $\act = \pr_2 \circ \psi$. Hence:
  \begin{align*}
  &\act^+ \cC^\bullet(\cM,\beta) = \psi^+ (\O_G \boxtimes \cC^\bullet(\cM,\beta)) \cong \psi^+ \cC^\bullet(\O_G \boxtimes \cM, 0 \oplus \beta) \\
  &= \psi^+ \Big(\mathscr D_{G \times X} \otimes_{\O_{G\times X}} (\O_G \boxtimes \cM)\{0 \oplus \beta\} \otimes_{\O_{G \times X}} {\textstyle \bigwedge_{\O_{G \times X}}^{-\bullet}}\big((\O_{G \times X} \otimes \mathfrak g) \oplus (\O_{G \times X} \otimes \mathfrak g)\big)\Big) \\
  &= \psi^+ \mathscr D_{G \times X} \otimes_{\O_{G\times X}} \act^* \cM \otimes_{\O_{G \times X}} \psi^*(\O_{G \times X}\{0 \oplus \beta\}) \otimes_{\O_{G \times X}} {\textstyle \bigwedge_{\O_{G \times X}}^{-\bullet}}\psi^*\big((\O_{G \times X} \otimes \mathfrak g) \oplus (\O_{G \times X} \otimes \mathfrak g)\big)\Big)
        \end{align*}

  Note that $\psi$ induces an isomorphism $\psi^\sharp \colon \psi^+ \mathscr D_{G \times X} \xrightarrow{\cong} \mathscr D_{G \times X}$ explicitly given by $\psi^* \O_{G \times X} \xrightarrow{\cong} \O_{G \times X}$ (composing regular functions with $\psi$) and the inverse of $\differential \psi \colon \Theta_{G\times X} \xrightarrow{\cong} \psi^* \Theta_{G \times X}$ (pushing forward vector fields). Moreover, since $\cM$ is $G$-equivariant, there is an isomorphism $\varphi \colon \act^* \cM \to \O_G \boxtimes \cM$ of $\O_{G \times X}$-modules.

  The adjoint action of $G$ on $\mathfrak g$ defines an isomorphism $\Ad \colon \O_G \otimes \mathfrak g \xrightarrow{\cong} \O_G \otimes \mathfrak g$, given on $U \subseteq G$ by understanding local sections $\in \Gamma(U,\O_G \otimes \mathfrak g)$ as morphisms $U \to \mathfrak g$ and mapping $\xi \colon U \to \mathfrak g$ to $\Ad(\xi)\colon U \to  \mathfrak g$, $g \mapsto \Ad(g)(\xi(g))$.
  Under the isomorphism $\psi$, we get on the level of vector fields:
  \[\differential\psi^{-1}(Z_{G \times X}(\xi_1,\xi_2)) = Z_G(\xi_1) + Z_X(\Ad^{-1}(\xi_1-\xi_2))\]
  for all $\xi \in \O_{G \times X} \otimes \mathfrak g$.
  This lifts to a commutative diagram
  \[\begin{tikzcd}
      \psi^*\big((\O_{G \times X} \otimes \mathfrak g) \oplus (\O_{G \times X} \otimes \mathfrak g)\big) \ar{r}{\psi^* Z_{G\times X}} \ar{d}{\chi} & \psi^*\Theta_{G \times X} \ar{d}{\differential\psi^{-1}} \\
      (\O_{G \times X} \otimes \mathfrak g) \oplus (\O_{G \times X} \otimes \mathfrak g) \ar{r}{Z_{G\times X}} & \Theta_{G \times X},
  \end{tikzcd}\]
  where $\chi$ is the isomorphism of Lie algebroids given by $\chi(\xi_1,\xi_2) := (\xi_1,\Ad^{-1}(\xi_1-\xi_2))$.

  Combining these isomorphisms, we get an isomorphism of complexes
  \[\psi^+ \cC^\bullet(\O_G \boxtimes \cM, 0 \oplus \beta) \xrightarrow{\cong} \cC^\bullet(\O_G \boxtimes \cM, \beta \oplus \beta)\]
  which by the above induces an isomorphism $\act^+ \cC^\bullet(\cM,\beta) = \psi^+(\O_G \boxtimes \cC^\bullet(\cM,\beta)) \xrightarrow{\cong} \O_G^\beta \boxtimes \cC^\bullet(\cM,\beta)$ in the derived category $D_{qc}^b(\mathscr D_{G \times X})$. One checks the cocycle condition.
\end{proof}

\begin{cor} \label{cor:holonomy}
  For an equivariant $\O_X$-module $\cM$ supported on finitely many orbits and $\beta \colon \mathfrak g \to \C$ a Lie algebra homomorphism, the cohomologies of $\cC^\bullet(\cM,\beta)$ are holonomic, i.e., $\mathscr D_X \otimes_{\cA_X}^\mathbb{L} \cM\{\beta\} \in D^b_h(\mathscr D_X)$.
\end{cor}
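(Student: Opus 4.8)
The plan is to deduce holonomicity of the cohomologies of $\cC^\bullet(\cM,\beta)$ from the strong equivariance established in the preceding proposition, via the general principle that a strongly equivariant $\mathscr D_X$-module whose characteristic variety projects into the conormal directions of the orbits is holonomic. Concretely, the first step is to reduce to the case where $\cM = \O_{\overline{O}}$ is the structure sheaf of (the closure of) a single orbit $O$: since $\cM$ is supported on finitely many orbits, a filtration argument together with the long exact cohomology sequences and the fact that $D^b_h(\mathscr D_X)$ is a triangulated subcategory stable under extensions lets me assume $\cM$ is supported on one orbit closure, and then reduce further to an equivariant $\O$-coherent sheaf on a single orbit.

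Next I would invoke the structure of strongly equivariant $\mathscr D_X$-modules. By the previous proposition, each $H^k \cC^\bullet(\cM,\beta)$ is $\beta$-twisted strongly $G$-equivariant. A standard fact (see Hotta--Takeuchi--Tanisaki or Kashiwara's work on equivariant $\mathscr D$-modules) is that a strongly equivariant $\mathscr D_X$-module has characteristic variety contained in the union of conormal bundles $T^*_O X$ to the $G$-orbits $O$ in $X$. Since $\cM$ is supported on finitely many orbits, the relevant cohomologies are supported on finitely many orbit closures, so their characteristic varieties are contained in a \emph{finite} union of conormal bundles $\bigcup_i T^*_{O_i} X$. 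As each conormal bundle $T^*_{O_i} X$ is Lagrangian (of dimension $\dim X$), this finite union is isotropic of dimension $\dim X$, which is precisely the holonomicity condition. The cohomologies are coherent $\mathscr D_X$-modules because $\cC^\bullet(\cM,\beta)$ is a bounded complex of coherent $\mathscr D_X$-modules (each term is $\mathscr D_X \otimes_{\O_X} (\cM\{\beta\} \otimes \bigwedge^\ell \mathfrak g)$ with $\cM$ coherent), so coherence of the cohomologies is automatic and only the dimension bound on the characteristic variety remains to be verified.

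The final step is to assemble these observations: coherence plus the containment $\Supp(\Ch(H^k)) \subseteq \bigcup_i T^*_{O_i}X$ forces $\dim \Ch(H^k \cC^\bullet(\cM,\beta)) \leq \dim X$, hence each cohomology is holonomic, giving $\mathscr D_X \otimes_{\cA_X}^\mathbb{L} \cM\{\beta\} \in D^b_h(\mathscr D_X)$ as claimed.

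I expect the main obstacle to be the rigorous justification that strong equivariance bounds the characteristic variety by conormals to orbits. The key input is that the singular support of an equivariant $\mathscr D_X$-module is $G$-invariant inside $T^*X$, and the $G$-invariant isotropic (or coisotropic) subvarieties of $T^*X$ sitting over orbit closures are exactly the conormals; the $\beta$-twist does not affect this since twisting by a character changes the module only by tensoring with a rank-one integrable connection, which leaves the characteristic variety unchanged. Making this precise may require a moment's care in citing the appropriate statement about equivariant coherent $\mathscr D$-modules (for instance, that the projection of the characteristic variety to $X$ is $G$-stable and the fibre directions are annihilated by the infinitesimal action), but no genuinely new computation should be needed beyond what the equivariance proposition already supplies.
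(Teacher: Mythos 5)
Your route is the same as the paper's: the entire proof there consists of the two ingredients you identify, namely the preceding equivariance proposition and the classical fact that (twisted) strongly equivariant $\mathscr D$-modules supported on finitely many orbits are holonomic, which the paper simply cites (\cite[\S 5]{Hot98}, or \cite[Theorem 11.6.1]{Hotta}). Your proposal unwinds that citation into its standard proof via characteristic varieties; your preliminary reduction to a single orbit closure is harmless but unnecessary, since the cited statement (and the conormal argument) applies directly to any module supported on finitely many orbits. Your explicit remark that coherence of the cohomologies follows from $\O_X$-coherence of $\cM$ is a useful clarification of a hypothesis the corollary's statement leaves implicit.

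One genuine caution about your sketch of the key lemma: $G$-invariance of the characteristic variety is \emph{not} the mechanism, and by itself proves nothing. For instance, if $X$ consists of finitely many $G$-orbits, the module $\mathscr D_X$ itself is weakly (quasi-) equivariant and supported on finitely many orbits, and its characteristic variety $T^*X$ is $G$-stable, yet it is not holonomic; relatedly, the restriction of $T^*X$ to an orbit closure is a $G$-invariant coisotropic subvariety sitting over that closure which is not a union of conormal bundles, so your claim that such subvarieties ``are exactly the conormals'' fails. What \emph{strong} equivariance actually gives is that the $\mathfrak g$-action by the operators $Z_X(\xi)-\beta(\xi)$ is the derivative of the equivariance isomorphism, hence preserves a $G$-stable good filtration; consequently the symbols $\sigma(Z_X(\xi))$ (which are unaffected by the $\beta$-twist) vanish on the characteristic variety, i.e. the characteristic variety lies in $\mu^{-1}(0)$ for the moment map $\mu \colon T^*X \to \mathfrak g^\vee$, and over a support consisting of finitely many orbits $\mu^{-1}(0)$ is exactly the finite union of conormal bundles, which is Lagrangian. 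This moment-map argument is precisely the content of the result the paper quotes, so unless you intend to reproduce it in full, the clean move is the paper's: invoke \cite[\S 5]{Hot98} or \cite[Theorem 11.6.1]{Hotta} after establishing twisted strong equivariance.
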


\begin{proof}
  (Twisted) strongly equivariant $\mathscr D$-modules supported on finitely many orbits are holonomic, see e.g.~\cite[§5]{Hot98} or \cite[Theorem 11.6.1]{Hotta} for a more modern account.
\end{proof}

\begin{cor}\label{cor:CohomAmplitude}
    Let $\cM$ be a $G$-equivariant coherent $\O_X$-module supported on finitely many orbits and let $\beta \colon \mathfrak g \to \C$ be a Lie algebra homomorphism. Then, for $k := \max\{i \mid \sheafExt^i_{\O_X}(\mathcal M, \O_X) \neq 0\}$ (if $X$ is affine,  this is simply the projective dimension of $\cM$ as $\cO_X$-module), we have
  \[H^i \cC^\bullet(\cM,\beta) = 0 \qquad \text{for } i \notin [\dim X - \dim G - k,0].\]
  \end{cor}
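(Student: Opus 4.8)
The plan is to bound the amplitude of $\cC^\bullet(\cM,\beta)$ from above directly and from below by passing to the holonomic dual via \cref{prop:dualOfScalarExtension}. Write $\cF := \mathscr D_X \otimes^{\mathbb L}_{\cA_X}\cM\{\beta\}$, which is represented by $\cC^\bullet(\cM,\beta) = \cS^\bullet_{\mathscr D_X|\cA_X}(\mathscr D_X \otimes_{\O_X}\cM\{\beta\})$ and is holonomic by \cref{cor:holonomy}. For the \emph{upper} bound I observe that the representing complex has, by the very definition of $\cS^\bullet$, nonzero terms only in cohomological degrees $\{-\dim G,\dots,0\}$, since the Lie algebroid $\O_X \otimes \mathfrak g$ has rank $\dim G$. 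Hence $H^i\cC^\bullet(\cM,\beta) = 0$ for all $i > 0$. The same observation, applied to an arbitrary $\cA_X$-module in place of $\cM\{\beta\}$, shows that the functor $\mathscr D_X \otimes^{\mathbb L}_{\cA_X}(-)$ has cohomological amplitude contained in $[-\dim G,0]$; this fact is used again below.

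For the \emph{lower} bound I compute $\mathbb D\cF$. Since $\O_X\{\beta\}\cong\O_X$ as $\O_X$-modules, the module $\cM\{\beta\}$ is $\O_X$-coherent, so \cref{prop:dualOfScalarExtension} (with $\cU_2 = \cA_X$, $\cU_1 = \mathscr D_X$, $m_2 = \dim G$, $m_1 = \dim X$) yields
\[\mathbb D\cF \cong \mathscr D_X \otimes^{\mathbb L}_{\cA_X}\big(R\!\sheafHom_{\O_X}(\cM\{\beta\},\O_X)\otimes_{\O_X}\omega_{1|2}\big)[\dim X - \dim G].\]
Set $\cN^\bullet := R\!\sheafHom_{\O_X}(\cM\{\beta\},\O_X)\otimes_{\O_X}\omega_{1|2}$. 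Here $\omega_{1|2}$ is an invertible $\O_X$-module, so it does not affect amplitudes, and since $\cM\{\beta\}\cong\cM$ as $\O_X$-modules, the cohomology of $\cN^\bullet$ is $\sheafExt^i_{\O_X}(\cM,\O_X)$ up to the line-bundle twist, concentrated in degrees $[0,k]$ by the definition of $k$. Feeding $\cN^\bullet$ into the functor $\mathscr D_X \otimes^{\mathbb L}_{\cA_X}(-)$ of amplitude $[-\dim G,0]$, the hypercohomology spectral sequence shows that $\mathscr D_X \otimes^{\mathbb L}_{\cA_X}\cN^\bullet$ has amplitude in $[-\dim G,k]$. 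After the shift by $\dim X - \dim G$, the dual $\mathbb D\cF$ has amplitude contained in $[-\dim X,\ k - \dim X + \dim G]$.

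Finally I invoke biduality. As $\cF$ is holonomic, $\cF \cong \mathbb D\mathbb D\cF$, and the holonomic duality functor reverses cohomological amplitude: it sends $D^{\le a}_h(\mathscr D_X)$ to $D^{\ge -a}_h(\mathscr D_X)$ and $D^{\ge b}_h(\mathscr D_X)$ to $D^{\le -b}_h(\mathscr D_X)$ (a consequence of holonomic modules being maximally Cohen--Macaulay over $\mathscr D_X$, cf.\ \cite{Hotta}). Reversing the amplitude $[-\dim X,\ k - \dim X + \dim G]$ of $\mathbb D\cF$ gives that $\cF$ has amplitude contained in $[\dim X - \dim G - k,\ \dim X]$; in particular $H^i\cC^\bullet(\cM,\beta) = 0$ for $i < \dim X - \dim G - k$. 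Combining with the upper bound $H^i = 0$ for $i > 0$ from the first paragraph yields the claimed vanishing for $i \notin [\dim X - \dim G - k,\ 0]$.

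I expect the main obstacle to be the careful bookkeeping of the shifts together with the justification that $\mathbb D$ reverses amplitude: the latter is exactly the input that converts the lower amplitude bound we want for $\cF$ into the upper amplitude bound on $\mathbb D\cF$ that the duality formula of \cref{prop:dualOfScalarExtension} delivers, and it is where holonomicity (hence \cref{cor:holonomy}) is essential. A minor but necessary point along the way is checking that the twist by the line bundle $\omega_{1|2}$ and the passage from $\cM\{\beta\}$ to $\cM$ leave the relevant $\O_X$-module amplitudes unchanged.
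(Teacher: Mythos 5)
Your proposal is correct and follows essentially the same route as the paper's proof: both apply \cref{prop:dualOfScalarExtension} to compute $\mathbb D(\mathscr D_X \otimes^{\mathbb L}_{\cA_X}\cM\{\beta\})$, bound the top cohomological degree of the resulting complex by $k+\dim G-\dim X$ using right-exactness of $\mathscr D_X\otimes_{\cA_X}(\cdot)$, and then use holonomicity (\cref{cor:holonomy}) together with the fact that $\mathbb D$ commutes with taking cohomology up to index reversal to deduce the lower bound on the amplitude of $\cC^\bullet(\cM,\beta)$. Your phrasing via biduality and amplitude reversal is just a repackaging of the paper's identity $H^{-i}\mathbb D(\cC^\bullet(\cM,\beta)) = \mathbb D H^i(\cC^\bullet(\cM,\beta))$, so the two arguments coincide in substance.
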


\begin{proof}
  Recall that the complex $\cC^\bullet(\cM,\beta)$ represents $\mathscr D_X \otimes_{\cA_X}^\mathbb{L} \cM\{\beta\}$. By \cref{prop:dualOfScalarExtension}, we have
  \[\mathbb D(\mathscr D_X \otimes_{\cA_X}^\mathbb{L} \mathcal M\{\beta\})  \cong
\mathscr D_X \otimes_{\mathcal \cA_X}^\mathbb{L} (R\!\sheafHom_{\O_X}(\mathcal M,\O_X) \otimes_{\O_X} \omega_{1|2}) [\dim X- \dim G],\]
      By the definition of $k$
    and right-exactness of $\mathscr D_X \otimes_{\cA_X} (\cdot)$, the right-hand side has no non-zero cohomology in degree larger than $k + \dim G - \dim X$. By \cref{cor:holonomy}, the cohomologies of $\cC^\bullet(\cM,\beta)$ are holonomic, which implies $H^{-i} \mathbb{D}(\cC^\bullet(\cM,\beta)) = \mathbb{D} H^i(\cC^\bullet(\cM,\beta))$. Therefore, $\cC^\bullet(\cM,\beta)\cong\mathscr D_X \otimes_{\cA_X}^\mathbb{L} \cM\{\beta\}$ has no cohomology in degree smaller than $\dim X - \dim G - k$.
      \end{proof}

\begin{rmk}\label{rmk:EK-complex}
  The case of a finite-dimensional regular representation $X=\C^n$ of a torus $G = (\C^*)^d$ corresponds to the study of GKZ-systems. In this case, $G$-equivariant coherent $\O_X$-modules supported on finitely many orbits are exactly toric modules as defined in \cite{MMW}. The complexes $\cC^\bullet(\cM,\beta)$ agree with the Euler--Koszul complexes studied in loc.cit.
\end{rmk}

\subsection{Duality theory}

Our aim is to use \cref{prop:dualOfScalarExtension} to describe the dual of $\cC^\bullet(\cM, \beta)$. For this, we need a little preparatory work to describe the relative dualizing module $\omega_{\mathscr D_V | \mathcal A_V}$ associated to the homomorphism of Lie algebroids $\O_X \otimes \mathfrak g \to \Theta_X$.

The right $\cU(\cE)$-module $\omega_\mathcal E$ for a locally free Lie algebroid $\mathcal E$ of finite rank is in the case $\cE = \Theta_X$, $\cU(\cE)=\mathscr D_X$ (for some smooth variety $X$) the canonical sheaf $\omega_X$ with a right $\mathscr D_V$-action via the Lie derivative. In the case $\cE = \O_X \otimes \mathfrak g$, $\cU(\cE) = \mathcal A_X$,
we denote $\omega_{\O_X \otimes \mathfrak g}$ by $\alpha_X$, a right $\mathcal A_X$-module whose underlying $\cO_X$-module is isomorphic to $\cO_X$.

\begin{lem} \label{lem:alphaDescription}
  We have an isomorphism
$$
\omega_{\mathscr D_X|\cA_X} \cong \omega_X^\vee\{-\trace \circ \ad\}
$$
as a left $\cA_X$-module, where on the right hand side $\omega_X^\vee$ is a left $\cA_X$-module through its structure as a $G$-equivariant line bundle.
\end{lem}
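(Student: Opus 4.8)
The plan is to unwind the definition of the relative dualizing module and then track the left $\cA_X$-action through an explicit trivialization. In the notation of \cref{sec:LieAlg} we are in the situation $\cE_1 = \Theta_X$, $\cE_2 = \O_X \otimes \mathfrak g$, so that $\cU_1 = \mathscr D_X$, $\cU_2 = \cA_X$, with $\omega_1 = \omega_X$ (carrying its right $\mathscr D_X$-module structure, viewed as a right $\cA_X$-module through the anchor $\cA_X \to \mathscr D_X$) and $\omega_2 = \alpha_X$. By definition $\omega_{\mathscr D_X|\cA_X} = \omega_{1|2} = \sheafHom_{\O_X}(\omega_X, \alpha_X)$, a left $\cA_X$-module. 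Since $\alpha_X$ is an invertible $\O_X$-module, the underlying $\O_X$-module is $\sheafHom_{\O_X}(\omega_X,\alpha_X) \cong \omega_X^\vee \otimes_{\O_X} \alpha_X \cong \omega_X^\vee$, so the entire content of the lemma is the identification of the left $\cA_X$-action.

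First I would make the right $\cA_X$-module $\alpha_X$ explicit. Fixing a basis $\xi_1,\dots,\xi_m$ of $\mathfrak g$ gives a global $\O_X$-frame $1\otimes\xi_1,\dots,1\otimes\xi_m$ of $\O_X\otimes\mathfrak g$ and a corresponding constant generator $\nu \in \bigwedge^m \mathfrak g^\vee \subseteq \alpha_X$. Using that the right $\cU(\cE)$-module structure on $\omega_\cE$ is the negated Lie-algebroid Lie derivative, together with the Cartan-type formula for the latter, I would compute $\nu \cdot (1\otimes\xi) = \trace(\ad\xi)\,\nu$ for $\xi \in \mathfrak g$: the anchor contribution $Z_X(\xi)$ annihilates the constant coefficients of $\nu$, while the bracket contribution is the infinitesimal coadjoint action on the top form $\bigwedge^m\mathfrak g^\vee$, which is multiplication by $\trace(\ad\xi)$. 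The $\O_X$-module structure is the tautological one, $\nu\cdot f = f\nu$. As a consistency check, this matches the characterization $\alpha_X = \omega_2 \cong \omega_1 \otimes_{\O_X} \omega_{1|2} = \omega_X \otimes_{\O_X} \omega_{1|2}$ used in the proof of \cref{lem:doubleSideChangingReinterpreted}.

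Next I would transport the left $\cA_X$-structure along the $\O_X$-linear isomorphism $\sheafHom_{\O_X}(\omega_X,\alpha_X) \xrightarrow{\sim} \omega_X^\vee$, $\phi \mapsto \psi$, defined by $\phi(-) = \psi(-)\,\nu$. Writing the left $\cA_X$-action on $\sheafHom_{\O_X}$ of the two right $\cA_X$-modules $\omega_X$ and $\alpha_X$ by the standard formula $(\xi\cdot\phi)(\omega) = \phi(\omega\cdot\xi) - \phi(\omega)\cdot\xi$, and inserting $\omega\cdot(1\otimes\xi) = -L_{Z_X(\xi)}\omega$ together with the action on $\alpha_X$ computed above, a short calculation gives
\[(\xi\cdot\psi)(\omega) = \bigl(Z_X(\xi)(\psi(\omega)) - \psi(L_{Z_X(\xi)}\omega)\bigr) - \trace(\ad\xi)\,\psi(\omega).\]
The parenthesized term is exactly the $\mathfrak g$-action coming from the natural $G$-equivariant structure on $\omega_X^\vee$ (the dual of the equivariant action $\xi\cdot\omega = L_{Z_X(\xi)}\omega$ on $\omega_X$), while the remaining term is multiplication by $-\trace(\ad\xi)$. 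Since for any $\beta$ the left $\cA_X$-module $\omega_X^\vee\{\beta\} = \omega_X^\vee\otimes_{\O_X}\O_X\{\beta\}$ has $\mathfrak g$ acting by $\xi\cdot(s\otimes 1) = (\xi\cdot s)\otimes 1 + \beta(\xi)(s\otimes 1)$, the displayed formula is precisely the action on $\omega_X^\vee\{-\trace\circ\ad\}$. As the transport map is $\O_X$-linear and intertwines the $1\otimes\mathfrak g$-actions, and $\cA_X$ is generated over $\O_X$ by $1\otimes\mathfrak g$, it is an isomorphism of left $\cA_X$-modules, which is the claim.

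The main obstacle I anticipate is the sign and convention bookkeeping, concentrated in the final step: one must check that the three conventions in play—the negated Lie derivative defining the right $\cU(\cE)$-structure on $\omega_\cE$, the sign in the left structure on $\sheafHom_{\O_X}$ of two right modules (which is forced by compatibility with the relation $\xi f = f\xi + Z(\xi)(f)$ in $\cA_X$), and the $G$-equivariant structure on $\omega_X^\vee$ as built in \cref{sec:EquHol} via $\varphi_y\colon g\mapsto g^{-1}\cdot y$—combine to yield exactly $-\trace\circ\ad$ rather than $+\trace\circ\ad$. In particular, reconciling the sign in $\omega\cdot(1\otimes\xi) = -L_{Z_X(\xi)}\omega$ with the equivariant action $\xi\cdot\omega = L_{Z_X(\xi)}\omega$ on $\omega_X$, bearing in mind the $g^{-1}$ in the definition of $Z_X$, is the delicate point; everything else is routine multilinear algebra.
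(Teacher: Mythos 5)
Your proposal is correct and takes essentially the same route as the paper: both unwind the definition $\omega_{\mathscr D_X|\cA_X} = \sheafHom_{\O_X}(\omega_X,\alpha_X)$ and locate the twist $-\trace\circ\ad$ in the discrepancy between the right $\cA_X$-structures inherited via $\cA_X\to\mathscr D_X$ and the $G$-equivariant left structures. The only difference is organizational: the paper delegates exactly your explicit computations (the identity $\nu\cdot(1\otimes\xi)=\trace(\ad\xi)\,\nu$ on $\alpha_X$ and the attendant sign bookkeeping) to the cited decomposition $\omega_X\cong\alpha_X\otimes_{\O_X}\omega_X\{\trace\circ\ad\}$ of \cite[Lemma~4.27]{GRSSW}, and then concludes by a short chain of hom-tensor adjunctions instead of transporting the $\sheafHom$-structure by hand as you do.
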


\begin{proof}
 The right $\mathcal A_X$-module $\omega_X$ (with the right $\mathcal A_X$-module structure inherited from the right $\mathscr D_X$-module structure via $\mathcal A_X \to \mathscr D_X$) can be understood as $\alpha_X \otimes_{\mathcal O_X}\omega_X\{\delta\}$ for $\delta := \trace \circ \ad \colon \mathfrak g \to \C$, where now $\omega_X$ is viewed as a left $\mathcal A_X$-module via its structure as an equivariant line bundle on $X$. This can be seen from writing out in detail the right $\mathcal A_X$-action, see \cite[Lemma~4.27]{GRSSW}) for details.

  We have a canonical isomorphism of left $\mathcal A_X$-modules
  \begin{align*}
  \omega_{\mathscr D_X|\mathcal A_X} &= \sheafHom_{\O_X}(\omega_X,\alpha_X) \cong \sheafHom_{\O_X}(\alpha_X \otimes_{\O_X} \omega_X \{\delta\}, \alpha_X) \\
  &\cong \sheafHom_{\O_X}(\omega_X\{\delta\},\sheafHom_{\O_X}(\alpha_X,\alpha_X)) \cong \sheafHom_{\O_X}(\omega_X\{\delta\},\O_X) \cong \omega_X^\vee\{-\delta\}. \qedhere
  \end{align*}
\end{proof}

\begin{cnv} \label{conv:OmegaM} In the following, we consider a $G$-equivariant coherent $\cO_X$-module $\cM$ that is Cohen--Macaulay, i.e., \[\sheafExt_{\O_X}^i(\cM,\omega_X) = 0 \text{ for all }i \neq  \codim \cM
\]
as $\cO_X$-modules. We write $\omega_{\cM} := \sheafExt_{\O_X}^{\codim \cM}(\cM,\omega_X)$ for its dualizing module, which is again a $G$-equivariant coherent $\cO_X$-module (this follows, e.g., from \cite[Proposition 5.1.26]{ChrissGinzburg}).
\end{cnv}

\begin{thm}\label{thm:DualityMain}
  In $D^b_{qc}(\mathscr D_X)$, for $\cM$ Cohen-Macaulay, we have
  \[\mathbb D \, \cC^\bullet(\cM,\beta) \cong \cC^\bullet(\omega_M \otimes_\O (\omega_X^\vee)^{\otimes 2},-\trace\circ \ad-\beta)[\dim \cM-\dim G],\]
  where, as usual, $\dim(\cM)$ means $\dim(\textup{supp}(\cM))$.
\end{thm}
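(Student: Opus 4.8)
The plan is to combine the abstract duality result \cref{prop:dualOfScalarExtension} with the explicit description of the relative dualizing module from \cref{lem:alphaDescription}. Recall that $\cC^\bullet(\cM,\beta)$ represents $\mathscr D_X \otimes_{\cA_X}^\mathbb{L} \cM\{\beta\}$, so I would start by applying \cref{prop:dualOfScalarExtension} with $\cU_2 = \cA_X$, $\cU_1 = \mathscr D_X$, $m_1 = \dim X$, $m_2 = \dim G$, and with the left $\cA_X$-module $\cM\{\beta\}$ in place of $\cM$. This immediately yields
\[\mathbb D(\mathscr D_X \otimes_{\cA_X}^\mathbb{L} \cM\{\beta\}) \cong \mathscr D_X \otimes_{\cA_X}^\mathbb{L}\big(R\!\sheafHom_{\O_X}(\cM\{\beta\},\O_X) \otimes_{\O_X} \omega_{\mathscr D_X|\cA_X}\big)[\dim X - \dim G].\]
The task is then to identify the $\cA_X$-module inside the tensor product as $\omega_\cM \otimes_\O (\omega_X^\vee)^{\otimes 2}$ twisted by the character $-\trace\circ\ad-\beta$, and to account for the shift.

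Next I would handle the three pieces of that inner module separately. First, since $\cM\{\beta\} \cong \cM \otimes_{\O_X} \O_X\{\beta\}$ with $\O_X\{\beta\}$ being $\O_X$ as an $\O_X$-module (only the $\cA_X$-action is twisted), the functor $R\!\sheafHom_{\O_X}(-,\O_X)$ applied to it should produce $R\!\sheafHom_{\O_X}(\cM,\O_X)\{-\beta\}$: dualizing flips the $\beta$-twist, as one checks by writing out how $\xi \in \mathfrak g$ acts on the Hom-module. Here I use the Cohen--Macaulay hypothesis: by \cref{conv:OmegaM}, $\sheafExt^i_{\O_X}(\cM,\omega_X)$ vanishes except in degree $\codim\cM$, so $R\!\sheafHom_{\O_X}(\cM,\O_X) \cong R\!\sheafHom_{\O_X}(\cM,\omega_X) \otimes_{\O_X} \omega_X^\vee \cong \omega_\cM[-\codim\cM] \otimes_{\O_X}\omega_X^\vee$ is concentrated in a single degree, with cohomology $\omega_\cM \otimes_{\O_X}\omega_X^\vee$. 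Second, by \cref{lem:alphaDescription} the relative dualizing module is $\omega_{\mathscr D_X|\cA_X} \cong \omega_X^\vee\{-\trace\circ\ad\}$. Tensoring these together over $\O_X$ and collecting the twists additively (the $\{\cdot\}$-twists of $G$-equivariant line bundles add up, since twisting by a character is tensoring with the corresponding rank-one $\cA_X$-module) gives the underlying $\O_X$-module $\omega_\cM \otimes_{\O_X} (\omega_X^\vee)^{\otimes 2}$ carrying the character $-\trace\circ\ad-\beta$, which is exactly $\omega_\cM \otimes_\O (\omega_X^\vee)^{\otimes 2}$ with twist $-\trace\circ\ad-\beta$ as an $\cA_X$-module.

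Finally I would reassemble: substituting back, $\mathscr D_X \otimes_{\cA_X}^\mathbb{L}$ of this module represents $\cC^\bullet(\omega_\cM \otimes_\O (\omega_X^\vee)^{\otimes 2}, -\trace\circ\ad-\beta)$ by definition of $\cC^\bullet$, and the shift becomes $[\dim X - \dim G]$ coming from \cref{prop:dualOfScalarExtension} combined with the internal shift $[-\codim\cM]$ from the Cohen--Macaulay identification; using $\codim\cM = \dim X - \dim\cM$ these combine to the stated $[\dim\cM - \dim G]$. The main obstacle I anticipate is the careful bookkeeping of the $G$-equivariant structures and $\beta$-twists: verifying that the abstract $\cA_X$-module $R\!\sheafHom_{\O_X}(\cM,\O_X)\otimes_{\O_X}\omega_{\mathscr D_X|\cA_X}$ genuinely carries the $G$-equivariant line-bundle structure of $\omega_\cM\otimes(\omega_X^\vee)^{\otimes 2}$ with precisely the character $-\trace\circ\ad-\beta$, rather than some sign- or $\ad$-twisted variant, requires tracking how $\mathfrak g$ acts through each $\sheafHom$ and each tensor factor, and matching it against the $\cA_X$-module conventions fixed in \cref{lem:alphaDescription}. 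The shift arithmetic tying $\codim\cM$ to $\dim\cM - \dim X$ is routine once the Cohen--Macaulay resolution is placed in the correct cohomological degree.
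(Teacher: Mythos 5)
Your proposal is correct and follows essentially the same route as the paper's own proof: apply \cref{prop:dualOfScalarExtension} to $\mathscr D_X \otimes_{\cA_X}^\mathbb{L} \cM\{\beta\}$, use the Cohen--Macaulay hypothesis to concentrate $R\!\sheafHom_{\O_X}(\cM,\O_X)$ in degree $\codim\cM$ as $\omega_\cM \otimes_{\O_X}\omega_X^\vee$, identify $\omega_{\mathscr D_X|\cA_X}$ via \cref{lem:alphaDescription}, and collect the character twists and shifts. The twist bookkeeping (flipping $\{\beta\}$ to $\{-\beta\}$ under dualizing, adding the twists) and the shift arithmetic $[\dim X - \dim G] + [-\codim\cM] = [\dim\cM - \dim G]$ match the paper's computation exactly.
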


\begin{proof}
     Denote $m := \dim G$ and $n := \dim \cM$. We use \cref{prop:dualOfScalarExtension} to obtain:
    \begin{align*}
      &\mathbb D_{\mathscr D_X} \, \cC^\bullet(\cM, \beta) \\
      &= \mathbb D_{\mathscr D_X}(\mathscr D_X \otimes_{\cA_X}^\mathbb{L} \cM\{\beta\}) \\
      &= \mathscr D_X \otimes_{\cA_X}^\mathbb{L} (R\!\sheafHom_{\cO_X}(\cM\{\beta\},\cO_X) \otimes_{\cO_X} \omega_{\mathscr D_X|\mathcal A_X})[\dim X-m] \hspace{9.6em} \text{(\cref{prop:dualOfScalarExtension})} \\
      &= \mathscr D_X \otimes_{\cA_X}^\mathbb{L} (\sheafExt_{\cO_X}^{\codim \cM}(\cM\{\beta\},\cO_X)[-\codim \cM] \otimes_{\cO_X} \omega_{\mathscr D_X|\mathcal A_X})[\dim X-m] \quad (\cM \text{ is Cohen--Macaulay)}\\
      &= \mathscr D_X \otimes_{\cA_X}^\mathbb{L} (\sheafExt_{\cO_X}^{\codim \cM}(\cM\{\beta\},\cO_X) \otimes_{\cO_X} \omega_{\mathscr D_X|\mathcal A_X})[n-m] \hspace{6.25em} \text{($\codim \cM = \dim X-\dim \cM$)}\\
      &= \mathscr D_X \otimes_{\cA_X}^\mathbb{L} (\sheafExt_{\cO_X}^{\codim \cM}(\cM,\cO_X)\{-\beta\} \otimes_{\cO_X} \omega_{\mathscr D_X|\mathcal A_X})[n-m]  & \\
      &= \mathscr D_X \otimes_{\cA_X}^\mathbb{L} (\sheafExt_{\cO_X}^{\codim \cM}(\cM,\omega_X) \otimes_{\cO_X} \omega_X^\vee\{-\beta\} \otimes_{\cO_X} \omega_{\mathscr D_X|\mathcal A_X})[n-m]  & \\
      &= \mathscr D_X \otimes_{\cA_X}^\mathbb{L} (\omega_\cM \otimes_{\cO_X} \omega_X^\vee\{-\beta\} \otimes_{\cO_X} \omega_X^\vee \{-\trace \circ \ad\})[n-m] \hspace{2.5em}  \text{(definition of $\omega_{\mathscr D_X| \mathcal A_X}$, \cref{lem:alphaDescription})} \\
      &= \cC^\bullet(\omega_\cM \otimes_{\cO_X} (\omega_X^\vee)^{\otimes 2}, -\trace \circ \ad - \beta)[n-m]. \qedhere
    \end{align*}
                                            \end{proof}

\begin{exa}
  Consider the case where $\cM:=\omega_X^{\otimes k}$, so that
  $\omega_M=\omega_X^{\otimes(1-k)}$.
  Then
  \[\mathbb{D}\,\cC^\bullet(\omega_X^{\otimes k},\beta) = \cC^\bullet(\omega_X^{\otimes (-k-1)},-\trace \circ \ad - \beta)[\dim(X)-\dim(G)].\]
  In particular, for $k = 0$ and $k = -1$, we obtain
  \begin{align*}
  \mathbb{D}\,\cC^\bullet(\O_X,\beta) &= \cC^\bullet(\omega_X^{\vee},-\trace \circ \ad - \beta)[\dim(X)-\dim(G)] \\
  \mathbb{D}\,\cC^\bullet(\omega_X^\vee,\beta) &= \cC^\bullet(\O_X,-\trace \circ \ad - \beta)[\dim(X)-\dim(G)]
  \end{align*}
    If the group is unimodular, so $\trace \circ \ad = 0$, and if additionally, the action is transitive, then we get in particular for $\beta = 0$ (by also taking into account that $H^i$ and $\bD$ commute due to \cref{cor:holonomy}) that
  \begin{equation}
      \label{eq:DualityOrbit}
  \mathbb{D}\,H^{n-m}\cC^\bullet(\omega_X^\vee,0) = H^0\cC^\bullet(\O_X,0) = \O_X.
  \end{equation}
\end{exa}

\section{Tautological systems} \label{sec:Duality}

To a regular representation $\rho \colon G \to V$ of a connected linear algebraic group $G$, the closure $\overline Y \subseteq V$ of a $G$-orbit $Y$ and a Lie algebra homomorphism $\beta \colon \mathfrak g \to \C$, one associates the following cyclic left $\mathscr D_V$-module, the \emph{Fourier-transformed tautological system}:
\[\hat\tau(\rho,\overline Y,\beta):= \mathscr D_V/(\mathscr D_V \mathcal I_{\overline Y} + \mathscr D_V (Z_V(\xi)-\beta'(\xi) \mid \xi \in \mathfrak g)),\]
where $\mathcal I_{\overline Y} \subseteq \O_{\overline Y}$ is the ideal sheaf of $\overline Y \subseteq V$ and $\beta':=\trace \circ \differential \rho-\beta \colon \mathfrak g \to \C$.

\begin{nota}\label{nota:BetaPrime}
  For a fixed representation $\rho \colon G \to V$, we use throughout the notation
  \[\beta' := \trace \circ \differential \rho - \beta\]
  for any Lie algebra homomorphism $\beta \colon \mathfrak g \to \C$.
\end{nota}

This notation originates from the fact that the operator $Z_V(\xi)-\beta'(\xi)$ is the negated Fourier transform on $V$ of the operator $Z_V(\xi)-\beta(\xi)$; those operators show up in the definition of tautological systems $\tau(\rho,\overline Y, \beta) = \FL(\hat\tau(\rho, \overline Y, \beta))$. Notice that because of $\FL \circ \mathbb D_{\mathscr D_V} = -\mathbb D_{\mathscr D_V} \circ \FL$, studying the dual of a tautological system or of its Fourier-transform are equivalent problems, and we choose to wwork with the Fourier-transformed tautological system $\hat\tau(\rho, \overline Y, \beta)$.

Note that, using the $G$-action on $V$, we have
\begin{align*}
  \hat\tau(\rho, \overline Y, \beta)
  &= \mathscr D_V \otimes_{\cA_V} \mathcal A_V/(\mathcal A_V \mathcal I_{\overline Y} + \mathcal A_V (\xi-\beta'(\xi) \mid \xi \in \mathfrak g')) \\
  &\cong \mathscr D_V \otimes_{\cA_V} ((\mathcal A_V/\mathcal A_V \mathcal I_{\overline Y})\{\beta'\} \otimes_{\cA_V} \cO_V)\\
  &\cong \mathscr D_V \otimes_{\cA_V} ((\mathcal A_V \otimes_{\O_V} \O_{\overline Y}\{\beta'\})\otimes_{\cA_V} \cO_V)  \\
  &\cong \mathscr D_V \otimes_{\cA_V} \O_{\overline Y}\{\beta'\},
\end{align*}
where the last isomorphism is due to \cref{lem:HTTanalogue}.

\begin{dfn}
  For $\rho \colon G \to V$, $\overline Y \subseteq V$ and $\beta \colon \mathfrak g \to \C$ as above and using \cref{nota:BetaPrime}, we define the derived Fourier-transformed tautological system
  \[\hat T(\rho, \overline Y, \beta) :=
  \cC^\bullet(\O_{\overline Y},\beta')
  = \mathcal S^\bullet_{\mathscr D_V|\mathcal A_V}(\mathscr D_V \otimes_{\O_V} \cO_{\overline Y}\{\beta'\}) \in D_{qc}^b(\mathscr D_V),\]
    where the definition of $\mathcal C^\bullet$ (and $\cA_V$) implicitly relies on the $G$-action on $V$ given by $\rho$.
      \end{dfn}

By \cref{lem:ComplexSRepresents},
in the derived category $D_{qc}^b(\mathscr D_V)$ of left $\mathscr D_V$-modules, we have
\[
\hat T(\rho, \overline Y, \beta) \cong (\mathscr D_V \otimes_{\cO_V} \cO_{\overline Y}\{\beta'\}) \otimes_{\cA_V}^\mathbb{L} \O_V \cong \mathscr D_V \otimes_\cA^\mathbb{L} \O_{\overline Y}\{\beta'\}
\]
where the last isomorphism is due to \cref{lem:HTTanalogue}.
In particular:
\[\hat\tau(\rho,\overline Y, \beta) = H^0 \hat T(\rho, \overline Y, \beta),\]
which is also easily checked from the definition of $\mathcal S_{\cdot|\cdot}^\bullet(\cdot)$ itself.

From \cref{cor:holonomy} and \cref{cor:CohomAmplitude}, we immediately obtain:
\begin{prop} \label{prop:THatHolonomic}
    Assume $\overline Y$ consists of finitely many $G$-orbits.
  Then $\hat T(\rho, \overline Y, \beta) \in D_h^b(\mathscr D_V)$, i.e., $\hat T(\rho, \overline Y, \beta)$ has holonomic cohomologies.
  (In particular,
    we obtain that
    $\hat{\tau}(\rho, \overline{Y},\beta)$ is holonomic, this is already known by  \cite{Hot98}). If $\overline Y$ is Cohen--Macaulay, then $H^i \hat T(\rho, \overline{Y}, \beta) = 0$ for $i \notin [\dim \overline Y-\dim G, \, 0]$.
\end{prop}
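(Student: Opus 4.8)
The plan is to apply the two preceding corollaries to the specific Lie algebroid morphism $\O_V \otimes \mathfrak g \to \Theta_V$ coming from the representation $\rho$, taking as coherent module $\cM := \O_{\overline Y}$ and as twist $\beta' = \trace \circ \differential\rho - \beta$. By construction $\hat T(\rho, \overline Y, \beta) = \cC^\bullet(\O_{\overline Y}, \beta')$, so both assertions reduce to understanding $\cC^\bullet(\cM,\beta')$ for this particular $\cM$, and nothing beyond the already-established machinery should be needed.

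First I would verify the hypotheses of \cref{cor:holonomy}: the structure sheaf $\O_{\overline Y}$ is a coherent $\O_V$-module, it is $G$-equivariant since $\overline Y \subseteq V$ is $G$-invariant, and its support $\overline Y$ is by assumption a union of finitely many $G$-orbits. Hence \cref{cor:holonomy} applies verbatim and shows that every cohomology of $\cC^\bullet(\O_{\overline Y}, \beta')$ is a holonomic $\mathscr D_V$-module, i.e.\ $\hat T(\rho, \overline Y, \beta) \in D_h^b(\mathscr D_V)$. Since $\hat\tau(\rho, \overline Y, \beta) = H^0 \hat T(\rho, \overline Y, \beta)$ and holonomicity is a property of the individual cohomology sheaves, this recovers in particular the holonomicity of $\hat\tau$.

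For the amplitude statement under the Cohen--Macaulay hypothesis, I would invoke \cref{cor:CohomAmplitude}, which yields vanishing of $H^i \cC^\bullet(\O_{\overline Y}, \beta')$ outside the range $[\dim V - \dim G - k, 0]$ for $k := \max\{i \mid \sheafExt^i_{\O_V}(\O_{\overline Y}, \O_V) \neq 0\}$. The one point requiring a short argument is the identification of $k$: since $V$ is smooth, $\omega_V$ is a line bundle, so the Cohen--Macaulay condition $\sheafExt^i_{\O_V}(\O_{\overline Y}, \omega_V) = 0$ for $i \neq \codim \overline Y$ is equivalent, after tensoring with $\omega_V^\vee$, to $\sheafExt^i_{\O_V}(\O_{\overline Y}, \O_V) = 0$ for $i \neq \codim \overline Y$. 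Consequently $k = \codim \overline Y = \dim V - \dim \overline Y$, and substituting this into the range gives $[\dim \overline Y - \dim G, 0]$, as claimed.

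The argument is essentially immediate given the two corollaries, so I do not expect a serious obstacle; the only place demanding care is confirming that the single integer $k$ controlling the lower end of the amplitude coincides with the codimension in the Cohen--Macaulay case. This rests entirely on the smoothness of $V$, which ensures that dualizing $\O_{\overline Y}$ against $\O_V$ and against $\omega_V$ differ only by a line-bundle twist, and such a twist leaves the vanishing locus of the $\sheafExt$ sheaves unchanged.
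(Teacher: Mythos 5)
Your proposal is correct and follows essentially the same route as the paper: holonomicity is read off from \cref{cor:holonomy} applied to $\cM = \O_{\overline Y}$ with twist $\beta'$, and the cohomological amplitude follows from \cref{cor:CohomAmplitude} once the integer $k$ there is identified with $\codim_V(\overline Y)$ under the Cohen--Macaulay hypothesis. Your explicit remark that the $\sheafExt$-vanishing against $\O_V$ and against $\omega_V$ agree because $\omega_V$ is a line bundle is exactly the (implicit) step the paper uses, so the two arguments coincide.
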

\begin{proof}
Only the last statement requires explanation. If $\overline{Y}$ is Cohen-Macaulay, then as stated in \cref{conv:OmegaM}, we have $\sheafExt_{\O_X}^i(\cM,\omega_X) = 0 \text{ for all }i \neq  \codim \cM
$. Therefore the integer
$k$ in \cref{cor:CohomAmplitude} equals $\textup{codim}_{V}(\overline{Y})$, and then $H^i \hat{T}(\rho,\overline{Y},\beta)=0$ for $i\notin [\dim V-\dim G -\textup{codim}_V(\overline{Y})=\dim\overline{Y}-\dim G,\, 0]$.
\end{proof}

\begin{lem}\label{lem:RelDual}
We have an isomorphism of left $\cA_V$-modules
$$
\omega_V^\vee \cong \cO_V\{\trace \circ \differential \rho\},
$$
where $\omega_V^\vee$ is a left $\cA_V$-module through its $G$-equivariant structure.
\end{lem}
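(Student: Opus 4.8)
The plan is to exhibit an explicit isomorphism of the underlying $\cO_V$-modules and then reduce $\cA_V$-linearity to a single computation on generators. Since $V$ is a vector space with linear coordinates $x_1,\dots,x_N$, the line bundle $\omega_V^\vee = \bigwedge_{\cO_V}^N \Theta_V$ is free of rank one, globally generated by the constant top polyvector field $\partial := \partial_{x_1}\wedge\dots\wedge\partial_{x_N}$, while $\cO_V\{\trace\circ\differential\rho\}$ is by definition free of rank one with distinguished generator the class $\overline 1$ of $1$. First I would define the $\cO_V$-linear isomorphism $\omega_V^\vee \xrightarrow{\cong} \cO_V\{\trace\circ\differential\rho\}$ sending $\partial\mapsto\overline 1$, and reduce the claim of $\cA_V$-linearity to the compatibility on these two generators.

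For this reduction, recall that on any $\cA_V$-module the action of $\xi\in\mathfrak g$ lifts the vector field $Z_V(\xi)$ and obeys the Leibniz rule $\xi\cdot(fs) = Z_V(\xi)(f)\,s + f\,(\xi\cdot s)$, directly from the multiplication in $\cA_V = \cO_V\otimes\cU(\mathfrak g)$ recorded in \cref{ex:AModAsLieAlgebroid}. Hence the map above is automatically $\cA_V$-linear once it matches the action on generators, i.e.\ once $\xi\cdot\partial = (\trace\circ\differential\rho)(\xi)\,\partial$, since on $\cO_V\{\trace\circ\differential\rho\}$ one has $\xi\cdot\overline 1 = (\trace\circ\differential\rho)(\xi)\,\overline 1$ by the very definition of $\cO_V\{\cdot\}$. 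The core of the proof is thus the single identity for $\xi\cdot\partial$.

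To establish that identity I would write $A := \differential\rho(\xi)\in\End(V)$ with matrix $(a_{ij})$, so that $Z_V(\xi) = -\sum_{i,j} a_{ij}x_j\partial_{x_i}$ (the sign coming from $\varphi_y(g) = g^{-1}\cdot y$ in \cref{ex:AModAsLieAlgebroid}). The $\cA_V$-action of $\xi$ on the equivariant bundle $\Theta_V$ is the Lie derivative $v\mapsto[Z_V(\xi),v]$, for which a short computation gives $[Z_V(\xi),\partial_{x_k}] = \sum_i a_{ik}\partial_{x_i}$; extending this derivation to the top exterior power yields $\xi\cdot\partial = \trace(A)\,\partial = (\trace\circ\differential\rho)(\xi)\,\partial$, as needed. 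Equivalently, and more conceptually, one may observe that $\omega_V^\vee \cong \cO_V\otimes_\C\det V$ as a $G$-equivariant bundle, with $G$ acting on the fibre $\det V$ through the character $\det\rho$, whose derivative is precisely $\trace\circ\differential\rho$.

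The main obstacle is not a conceptual one but the bookkeeping of signs and conventions: one must reconcile (a) the sign in $Z_V(\xi) = -\differential\rho(\xi)$, (b) the precise recipe of \cref{sec:EquHol} turning the $G$-equivariant structure into the $\mathfrak g$- (hence $\cA_V$-) action, and (c) the defining convention of $\cO_V\{\gamma\}$. I would fix all of these at once by the consistency check on $\cO_V = \cO_V\{0\}$, where the recipe gives $\xi\cdot f = Z_V(\xi)(f)$; this pins down every sign and forces the $+\trace\circ\differential\rho$ (rather than its negative) that appears in the statement. The whole argument runs parallel to the computation underlying \cref{lem:alphaDescription}.
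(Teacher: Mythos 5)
Your proposal is correct and follows essentially the same route as the paper's proof: identify $\omega_V^\vee$ with $\O_V\,\partial_{x_1}\wedge\dots\wedge\partial_{x_N}$ in linear coordinates and compute the $\mathfrak g$-action on the generator as the Lie derivative $\sum_i \partial_{x_1}\wedge\dots\wedge[Z_V(\xi),\partial_{x_i}]\wedge\dots\wedge\partial_{x_N} = \trace(\differential\rho(\xi))\,\partial_{x_1}\wedge\dots\wedge\partial_{x_N}$. The extra material you include (the Leibniz-rule reduction to generators and the $\det V$ interpretation) is a harmless elaboration of what the paper leaves implicit.
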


\begin{proof}

  Through the choice of a basis of $V$, resp.\ the choice of a coordinate system, we get the explicit description $\omega_V^\vee = \bigwedge_{\cO_V}^{\dim V} \Theta_V = \O_V \partial_1 \wedge \dots \wedge \partial_N$ and the action of $\xi \in \mathfrak g$ on this equivariant line bundle is given by
  \begin{align*}
    \xi \cdot (\partial_1 \wedge \dots \wedge \partial_N) &= \sum_{i=1}^N \partial_1 \wedge \dots \wedge [Z_V(\xi),\partial_i] \wedge \dots \wedge \partial_N \\
    &= \sum_{i=1}^N  \partial_1 \wedge \dots \wedge [{\textstyle\sum_{j,k=1}^N -\differential\rho(\xi)_{kj} x_j \partial_k},\partial_i] \wedge \dots \wedge \partial_N \\
    &= \sum_{i=1}^N  \partial_1 \wedge \dots \wedge [-\differential\rho(\xi)_{ii} x_i \partial_i,\partial_i] \wedge \dots \wedge \partial_N \\
    &= \sum_{i=1}^N \differential\rho(\xi)_{ii} \partial_1 \wedge \dots \wedge \partial_N \\
    &= \trace (\differential \rho (\xi)) \ (\partial_1 \wedge \dots \wedge \partial_N),
  \end{align*}
  hence $\omega_V^\vee \cong \O_V\{\trace \circ \differential \rho\}$.
    \end{proof}

\begin{prop} \label{prop:DualityTautCM}
          Assume $\overline Y$ is Cohen--Macaulay and $n$-dimensional, let $m := \dim G$. Then
  \begin{align*}
  \mathbb{D}\,\hat T(\rho, \overline Y, \beta) &=
  \cC^\bullet(\omega_{\overline Y},2\trace \circ \differential\rho-\trace\circ \ad - \beta')[n-m] \\
  &= \cC^\bullet(\omega_{\overline Y},\trace \circ \differential\rho-\trace\circ \ad + \beta)[n-m].
  \end{align*}
    If $\overline Y$ consists of finitely many $G$-orbits, this implies
  \begin{align*}\mathbb{D} \, \hat \tau(\rho, \overline Y, \beta)   &\cong H^{n-m} \mathcal \cC^\bullet(\omega_{\overline Y}, \trace \circ \differential \rho-\trace \circ \ad + \beta).
  \end{align*}
\end{prop}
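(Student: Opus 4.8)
The plan is to read off the first displayed equation as a direct application of the general duality result \cref{thm:DualityMain}, and then to deduce the second line and the statement about $\hat\tau$ by elementary bookkeeping together with holonomicity. First I would recall that, by definition, $\hat T(\rho,\overline Y,\beta)=\cC^\bullet(\O_{\overline Y},\beta')$ with $\beta'=\trace\circ\differential\rho-\beta$, and that $\O_{\overline Y}$ is Cohen--Macaulay as an $\O_V$-module precisely because $\overline Y$ is, with dualizing module $\omega_{\O_{\overline Y}}=\omega_{\overline Y}$ and $\dim(\O_{\overline Y})=n$. Applying \cref{thm:DualityMain} on $X=V$ to $\cM=\O_{\overline Y}$ and the character $\beta'$ yields
\[\mathbb D\,\hat T(\rho,\overline Y,\beta)\cong\cC^\bullet\big(\omega_{\overline Y}\otimes_\O(\omega_V^\vee)^{\otimes 2},\,-\trace\circ\ad-\beta'\big)[n-m].\]

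Next I would simplify the coefficient module. By \cref{lem:RelDual} we have $\omega_V^\vee\cong\O_V\{\trace\circ\differential\rho\}$ as left $\cA_V$-modules, hence $(\omega_V^\vee)^{\otimes2}\cong\O_V\{2\trace\circ\differential\rho\}$ and therefore $\omega_{\overline Y}\otimes_\O(\omega_V^\vee)^{\otimes2}\cong\omega_{\overline Y}\{2\trace\circ\differential\rho\}$. The one point needing verification is the behaviour of $\cC^\bullet$ under such a twist: since $\cN\{\gamma\}=\cN\otimes_\O\O\{\gamma\}$ and $\O\{\gamma\}\otimes_\O\O\{\delta\}\cong\O\{\gamma+\delta\}$ (the relevant $\mathfrak g$-actions being scalar, they simply add), one checks directly from the definition of $\cC^\bullet$ that $\cC^\bullet(\cN\{\gamma\},\delta)=\cC^\bullet(\cN,\gamma+\delta)$. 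Absorbing the twist $2\trace\circ\differential\rho$ into the character then gives the first asserted equality
\[\mathbb D\,\hat T(\rho,\overline Y,\beta)\cong\cC^\bullet\big(\omega_{\overline Y},\,2\trace\circ\differential\rho-\trace\circ\ad-\beta'\big)[n-m],\]
and substituting $\beta'=\trace\circ\differential\rho-\beta$ turns the character into $\trace\circ\differential\rho-\trace\circ\ad+\beta$, which is exactly the second line.

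Finally, for the statement on $\hat\tau$ I would invoke holonomicity. When $\overline Y$ consists of finitely many $G$-orbits, \cref{cor:holonomy} (via \cref{prop:THatHolonomic}) guarantees that all cohomologies of $\hat T(\rho,\overline Y,\beta)$ are holonomic $\mathscr D_V$-modules. Since $\mathbb D$ preserves holonomicity and sends a single holonomic module to a single holonomic module concentrated in degree $0$, the contravariant functor $\mathbb D$ exchanges cohomological degrees on $D^b_h(\mathscr D_V)$, i.e.\ $H^j\mathbb D(\cdot)\cong\mathbb D\,H^{-j}(\cdot)$. Combining this with $\hat\tau(\rho,\overline Y,\beta)=H^0\hat T(\rho,\overline Y,\beta)$ and the shift $[n-m]$ from the previous step gives
\[\mathbb D\,\hat\tau(\rho,\overline Y,\beta)\cong H^0\big(\mathbb D\,\hat T(\rho,\overline Y,\beta)\big)\cong H^{n-m}\cC^\bullet\big(\omega_{\overline Y},\,\trace\circ\differential\rho-\trace\circ\ad+\beta\big).\]
I do not expect a serious obstacle: the conceptual content is already packaged in \cref{thm:DualityMain}, and the remaining work is purely a matter of tracking the character arithmetic through the two twists and the side-changing, and of noting that the commutation of $\mathbb D$ with $H^0$ genuinely requires holonomicity, which is why the $\hat\tau$-statement needs the finitely-many-orbits hypothesis while the derived-category identity needs only Cohen--Macaulayness.
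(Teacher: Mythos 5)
Your proof is correct and follows essentially the same route as the paper's (very terse) proof: apply \cref{thm:DualityMain} to $\cM=\O_{\overline Y}$ with character $\beta'$, absorb the twist $(\omega_V^\vee)^{\otimes 2}$ into the character, substitute $\beta'=\trace\circ\differential\rho-\beta$ via \cref{nota:BetaPrime}, and pass to $\hat\tau$ by commuting $H^0$ with $\mathbb D$ on complexes with holonomic cohomology (guaranteed by \cref{prop:THatHolonomic} under the finitely-many-orbits hypothesis). The only noteworthy difference is that the paper's proof cites \cref{lem:alphaDescription} where your character bookkeeping correctly rests on \cref{lem:RelDual} (the identification $\omega_V^\vee\cong\O_V\{\trace\circ\differential\rho\}$), and your explicit check of the twist-absorption identity $\cC^\bullet(\cN\{\gamma\},\delta)=\cC^\bullet(\cN,\gamma+\delta)$ makes precise a step the paper leaves implicit.
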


\begin{proof}
  The first claim follows from \cref{thm:DualityMain} together with \cref{lem:alphaDescription} and \cref{nota:BetaPrime}. Applying $H^0(\cdot)$ leads to the duality statement for $\hat\tau(\rho, \overline Y, \beta)$ because $H^0(\cdot)$ commutes with the duality functor in $\mathscr D_h^b(\mathscr D_V)$ (and we know by \cref{prop:THatHolonomic} that $\hat{T}(\rho,\overline{Y},\beta)$ has holonomic cohomologies under the assumption that $\overline Y$ consists of finitely many orbits).
\end{proof}

In the following, we will consider the more restrictive case where $\overline{Y}$ is supposed to be Gorenstein with trivial canonical bundle. More precisely, we assume that there is a Lie algebra homomorphism $\gamma \colon \mathfrak g \to \C$
  such that $\omega_{\overline Y} \cong \O_{\overline Y}\{-\gamma\}$ as left $\cA_V$-modules (where $\omega_{\overline{Y}}$ carries a left $\cA_V$-structure since it is an  $G$-equivariant $\cO_{\overline{Y}}$-module, see \cref{conv:OmegaM}).
\begin{thm} \label{thm:dualityTautGorenstein}
  Assume that $\overline Y$ is Gorenstein such that $\omega_{\overline Y} \cong \O_{\overline Y}\{-\gamma\}$ for some $\gamma \colon \fg \to \dC$. Then
  \[\mathbb{D}\, \hat T(\rho, \overline Y, \beta) =  \hat T(\rho, \overline Y, \tilde\beta)[n-m],\]
  where $\tilde\beta := \trace\circ \ad + \gamma - \beta$. Hence, if $\overline Y$ consists of finitely many $G$-orbits, then
  \begin{align*}\mathbb{D}\,\hat \tau(\rho, \overline Y, \beta) &= H^{n-m}\hat T(\rho, \overline Y, \tilde \beta).
    \end{align*}
\end{thm}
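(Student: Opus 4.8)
The plan is to deduce the statement directly from \cref{prop:DualityTautCM}, whose second displayed formula already gives, under the Cohen--Macaulay hypothesis (which is implied by the Gorenstein assumption),
\[\mathbb D\,\hat T(\rho,\overline Y,\beta) = \cC^\bullet(\omega_{\overline Y},\trace\circ\differential\rho-\trace\circ\ad+\beta)[n-m].\]
All that then remains is to rewrite the right-hand side as $\hat T(\rho,\overline Y,\tilde\beta)[n-m]$ using the Gorenstein hypothesis $\omega_{\overline Y}\cong\O_{\overline Y}\{-\gamma\}$. Recalling that by definition $\hat T(\rho,\overline Y,\tilde\beta)=\cC^\bullet(\O_{\overline Y},\tilde\beta')$ with $\tilde\beta'=\trace\circ\differential\rho-\tilde\beta$ (see \cref{nota:BetaPrime}), the whole argument reduces to a bookkeeping of twists of the $\fg$-action together with one elementary compatibility lemma.

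That lemma is the identity $\cC^\bullet(\cM\{\delta\},\beta)\cong\cC^\bullet(\cM,\beta+\delta)$ for a $G$-equivariant coherent $\O_V$-module $\cM$ and Lie algebra homomorphisms $\beta,\delta\colon\fg\to\dC$. To prove it I would unwind the definitions: $\cC^\bullet(\cM\{\delta\},\beta)$ is built from $\mathscr D_V\otimes_{\O_V}(\cM\{\delta\})\{\beta\}$, and $(\cM\{\delta\})\{\beta\}=\cM\otimes_{\O_V}\O_V\{\delta\}\otimes_{\O_V}\O_V\{\beta\}$. Since the $\fg$-action on a tensor product of left $\cA_V$-modules is given by the coproduct, the scalars of the two twists simply add, so there is a canonical isomorphism of left $\cA_V$-modules $\O_V\{\delta\}\otimes_{\O_V}\O_V\{\beta\}\cong\O_V\{\delta+\beta\}$, whence $(\cM\{\delta\})\{\beta\}\cong\cM\{\delta+\beta\}$. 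As $\cC^\bullet$ depends only on this twisted module, the claimed identity follows at once from the definition.

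With this in hand I would substitute $\omega_{\overline Y}\cong\O_{\overline Y}\{-\gamma\}$ and apply the lemma with $\delta=-\gamma$:
\[\cC^\bullet(\omega_{\overline Y},\trace\circ\differential\rho-\trace\circ\ad+\beta)\cong\cC^\bullet(\O_{\overline Y},\trace\circ\differential\rho-\trace\circ\ad+\beta-\gamma).\]
It then suffices to check that the second argument equals $\tilde\beta'$. Indeed, with $\tilde\beta=\trace\circ\ad+\gamma-\beta$ one computes $\tilde\beta'=\trace\circ\differential\rho-\tilde\beta=\trace\circ\differential\rho-\trace\circ\ad-\gamma+\beta$, which matches exactly. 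This yields the first asserted equality $\mathbb D\,\hat T(\rho,\overline Y,\beta)=\hat T(\rho,\overline Y,\tilde\beta)[n-m]$.

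For the final isomorphism, assuming $\overline Y$ consists of finitely many $G$-orbits, \cref{prop:THatHolonomic} guarantees that the relevant complexes have holonomic cohomology, so $\mathbb D$ commutes with taking cohomology. Applying $H^0$ to the equality just established gives, on the one hand, $H^0(\mathbb D\,\hat T(\rho,\overline Y,\beta))=\mathbb D\,H^0\hat T(\rho,\overline Y,\beta)=\mathbb D\,\hat\tau(\rho,\overline Y,\beta)$ and, on the other, $H^0(\hat T(\rho,\overline Y,\tilde\beta)[n-m])=H^{n-m}\hat T(\rho,\overline Y,\tilde\beta)$, which is the asserted formula. The only genuinely delicate point in the whole argument is this last commutation of $\mathbb D$ with cohomology, which rests entirely on the holonomicity furnished by \cref{prop:THatHolonomic}; everything else is the routine twist-bookkeeping described above.
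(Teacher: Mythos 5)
Your proof is correct and follows essentially the same route as the paper's: both deduce the statement from \cref{prop:DualityTautCM} by substituting $\omega_{\overline Y}\cong\O_{\overline Y}\{-\gamma\}$ and identifying the resulting parameter with $\tilde\beta'$ via \cref{nota:BetaPrime}. The twist-addition identity $\cC^\bullet(\cM\{\delta\},\beta)\cong\cC^\bullet(\cM,\beta+\delta)$ and the holonomicity argument allowing $\mathbb D$ to commute with $H^0$ are precisely the steps the paper leaves implicit (the latter being invoked in the proof of \cref{prop:DualityTautCM}), so spelling them out is a faithful elaboration rather than a different approach.
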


\begin{proof}
  From \cref{prop:DualityTautCM} and \cref{nota:BetaPrime}, we get:
  \begin{align*}
  \mathbb{D}\, \hat T(\rho, \overline Y, \beta)
  &= \cC^\bullet(\omega_{\overline Y},\trace \circ \differential\rho-\trace\circ \ad + \beta)[n-m] \\
  &= \cC^\bullet(\O_{\overline Y},\trace \circ \differential\rho-\trace\circ \ad - \gamma + \beta)[n-m] \\
  &= \hat{T}(\rho, \overline Y,\trace\circ \ad + \gamma - \beta)[n-m]. \qedhere
  \end{align*}
\end{proof}

We now consider the following situation:
For this, let $\rho_0 \colon G_0 \to \GL(V)$ be a representation of a group $G_0$. We extend this to a representation $\rho \colon G \to \GL(V)$ for $G := \C^* \times G_0$ by letting $\C^*$ act by simple scaling on $V$. Let $\overline Y \subseteq V$ be a $G$-orbit closure, which is then the affine cone over a projective $G_0$-variety $Z \subseteq \P V$.
In this setup, if $\overline Y$ is Gorenstein,
then its canonical bundle is trivial (since $\Pic(\overline Y) = 0$). Then by considering  a trivializing section of $\omega_{\overline{Y}}$,
it follows that $G$ must act via a group character $\chi:G\rightarrow \dC^*$, and then we have $\omega_{\overline Y} \cong \O_{\overline Y}\{-\gamma\}$, where $\gamma\colon \fg\to\dC$ is the derivative of $\chi^{-1}$.

\begin{exa}\label{ex:LCI-Gorenstein}
Suppose that $\overline{Y} \subseteq V$ is a complete intersection given as the vanishing $\{f_1 = \dots = f_k = 0\}$ of homogeneous polynomials $f_i$ of degree $d_i$. Then $\omega_{\overline Y} \cong \O_{\overline Y}\{-\gamma\}$ with $\gamma \colon \mathfrak g= \mathfrak g_0 \oplus \C\mathbf e \to \C$ given by $\gamma(\mathbf e) = \dim(V)-d_1-\dots-d_k
$ and, if $G_0$ is semisimple, $\restr{\gamma}{\mathfrak g_0} = 0$.

To see this, we observe that the Koszul complex on $\O_V$ given by $f_1,\dots,f_k$ is a $\dC^*$-equivariant resolution of $\O_{\overline{Y}}$ (at least when its terms are appropriately graded). Hence we have
$$
\omega_{ \overline{Y}} = \sheafExt_{\O_V}^k(\O_{\overline Y},\omega_V) \cong \cO_{\overline{Y}}\{d_1+\ldots+d_k - \dim(V)\}
$$
as $\dC^*$-equivariant $\cO_V$-modules, recall that $\omega_V\cong \cO_V\{-\trace\circ \differential\rho\}$ and that $\trace\circ \differential\rho(\mathbf e) = \dim V$.

On the other hand, we already know that $\overline{Y}$ that $\omega_{\overline{Y}}\cong \cO_{\overline{Y}}\{-\gamma\}$ for some $\gamma\colon \fg\to \C$. Therefore we must have $\gamma(\mathbf{e})=\dim(V)-d_1-\ldots-d_k$. Moreover, if $G_0$ is semi-simple, so that $[\fg_0,\fg_0]=\fg_0$, there are no non-trivial characters on $\mathfrak g_0$, hence $\restr{\gamma}{\mathfrak g_0} = 0$.
\end{exa}

\begin{exa}\label{ex:HomSpace}
  Consider the case that $\overline{Y}$ is the affine cone over a projective homogeneous space $G_0/P$ inside the embedding $G_0/P \hookrightarrow \P V$ by the complete linear system $|\Ell|$ of a very ample $G_0$-equivariant line bundle $\Ell$ on $X$. Assume $G_0$ is semi-simple and $\dim G_0/P > 0$.   The affine cone $\overline{Y}$ is a $G$-space for $G = \C^* \times G_0$, and it follows from \cite[Theorem 5]{Ramanathan} that it is Cohen-Macaulay, so that \cref{prop:DualityTautCM} applies.
  As shown in \cite[Theorem 5.1, Proposition 5.9 and Corollary 6.13]{GRSSW}, $\hat\tau(\rho, \overline Y, \beta)$ is non-zero only in the two cases $\beta = 0$ or $\beta(\mathbf e) = \ell/k$ with $\Ell^{\otimes \ell} \cong \omega_{G_0/P}^{\otimes (-k)}$ as $G$-equivariant line bundles.
  Consider the latter case with the assumption $\beta(\mathbf e) \in \Z$, i.e., $\Ell^{\otimes \ell} \cong \omega_{G_0/P}^\vee$ for some $\ell \in \Z$, then $\omega_{\overline Y \setminus 0} \cong \O_{\overline Y \setminus 0}\{-\beta\}$ (because we have $\cO_{G_0/P}(\ell)\cong \omega_{G_0/P}^\vee$ in this case).
  Since $\overline Y$ is normal and $\codim_Y\{0\} \geq 2$, and since $\omega_{\overline{Y}}$ is reflexive,   we have $\omega_{\overline Y} \cong \O_{\overline Y}\{-\beta\}$. In particular, $\overline Y$ is Gorenstein in this case. With \cref{thm:dualityTautGorenstein}, we conclude:
\begin{equation}\label{eq:DualHomSpaceGorenstein}
  \mathbb{D}_{\mathscr{D}_V}\, \hat\tau(\rho,\overline Y, \beta) = H^{n-m} \hat T(\rho, \overline Y, 0), \qquad n = \dim \overline Y, \ m = \dim G.
  \end{equation}
  When restricting to the open orbit in $\overline Y$, this recovers \cref{eq:DualityOrbit}.
  On the other hand, in the case $\beta(\mathbf e) = \ell/k \notin \Z$, from $\omega_{G_0/P}^{\otimes k} = \Ell^{\otimes (-\ell)}$ we similarly obtain $\omega_{\overline Y}^{\otimes k} \cong \O_{\overline Y}\{-k\beta\}$ as equivariant line bundles; hence, $\overline Y$ is $\Q$-Gorenstein (but not Gorenstein, since we know by \cite[Lemma 4.31 and Lemma 4.32]{GRSSW} that $\omega_{\overline{Y}\backslash 0}$ is not trivial in this case) and
  \[\mathbb{D}_{\mathscr{D}_V}\,\hat\tau(\rho,\overline Y, \beta) = H^{n-m} \cC^\bullet(\omega_{\overline Y},\trace \circ \differential\rho), \qquad n = \dim \overline Y, \ m = \dim G. \qedhere\]
\end{exa}

\section{Applications and Examples} \label{sec:App}

In the setup of the previous section, we can give more precise information on the dual of a tautological system under some assumptions on the dimension of the group $G$ compared to the dimension of the (closure of the) $G$-orbit $\overline{Y}$ used in the definition of the tautological system. We list below a few cases of interest.

\subsection{Case \texorpdfstring{$\dim(G)=\dim(\overline{Y})$}{dim(G)=dim(Y)}}

In this case, we obtain the most satisfying result as a direct consequence of \cref{thm:dualityTautGorenstein}:

\begin{cor}\label{cor:DualDimEqual}
Assume that $\overline{Y}\subseteq V$ is Gorenstein with
$\omega_{\overline Y} \cong \O_{\overline Y}\{-\gamma\}$ and such that $\dim(G)=\dim(\overline{Y})$. Assume further that $\overline Y$ consists of finitely many $G$-orbits. Then we have
$$
  \mathbb{D} \, \hat \tau(\rho, \overline Y, \beta) = \hat \tau(\rho, \overline Y, \trace\circ \ad + \gamma - \beta).
  $$
\end{cor}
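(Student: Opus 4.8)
The plan is to specialize \cref{thm:dualityTautGorenstein} to the equidimensional case $n=m$ and then reinterpret the resulting degree-zero cohomology as a Fourier-transformed tautological system. No genuinely new ingredient is needed: the statement is a direct corollary, so the ``proof'' is essentially the verification that the hypotheses here match those of \cref{thm:dualityTautGorenstein} and that the cohomological shift collapses.

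First I would invoke \cref{thm:dualityTautGorenstein}. Its hypotheses are precisely those assumed here, namely that $\overline Y$ is Gorenstein with $\omega_{\overline Y}\cong\O_{\overline Y}\{-\gamma\}$ and that $\overline Y$ consists of finitely many $G$-orbits. It therefore yields
\[\mathbb D\,\hat\tau(\rho,\overline Y,\beta) = H^{n-m}\hat T(\rho,\overline Y,\tilde\beta), \qquad \tilde\beta := \trace\circ\ad + \gamma - \beta,\]
with $n=\dim\overline Y$ and $m=\dim G$. The assumption $\dim(G)=\dim(\overline Y)$ means $m=n$, so the shift $n-m$ vanishes and the right-hand side becomes $H^0\hat T(\rho,\overline Y,\tilde\beta)$.

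Finally I would recall the identity $\hat\tau(\rho,\overline Y,\beta)=H^0\hat T(\rho,\overline Y,\beta)$ established immediately after the definition of $\hat T(\rho,\overline Y,\beta)$ (and already visible from the definition of $\mathcal S^\bullet_{\cdot|\cdot}$, whose degree-zero term is the cokernel computing the cyclic module). Applying this identity with $\tilde\beta$ in place of $\beta$ gives $H^0\hat T(\rho,\overline Y,\tilde\beta)=\hat\tau(\rho,\overline Y,\tilde\beta)$. Chaining the three displayed equalities produces
\[\mathbb D\,\hat\tau(\rho,\overline Y,\beta) = \hat\tau(\rho,\overline Y,\trace\circ\ad+\gamma-\beta),\]
as claimed.

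There is no real obstacle to overcome, since all the substantive work was already done in \cref{thm:DualityMain}, \cref{prop:DualityTautCM}, and \cref{thm:dualityTautGorenstein}. The only point worth flagging is that the hypothesis that $\overline Y$ consists of finitely many $G$-orbits cannot be dropped: it is what guarantees, via \cref{prop:THatHolonomic}, that $\hat T(\rho,\overline Y,\tilde\beta)$ has holonomic cohomologies, and hence that taking $H^0$ commutes with the duality functor $\mathbb D$ in $D^b_h(\mathscr D_V)$ --- the step that upgrades the complex-level duality of \cref{thm:dualityTautGorenstein} to the sharp module-level statement for $\hat\tau$. Thus the content of this corollary is entirely that in the equidimensional Gorenstein case both $\hat\tau$ and its dual sit in cohomological degree zero, so the dual is again an honest (Fourier-transformed) tautological system rather than merely a complex.
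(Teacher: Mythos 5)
Your proposal is correct and coincides with the paper's own (implicit) argument: the paper states \cref{cor:DualDimEqual} as a direct consequence of \cref{thm:dualityTautGorenstein}, exactly as you do, with the shift $[n-m]$ collapsing when $\dim G=\dim\overline Y$ and the identification $H^0\hat T(\rho,\overline Y,\tilde\beta)=\hat\tau(\rho,\overline Y,\tilde\beta)$ finishing the job. Your remark on the role of the finitely-many-orbits hypothesis (holonomicity, so that $H^0$ commutes with $\mathbb D$) is precisely the content of the second half of \cref{thm:dualityTautGorenstein} via \cref{prop:THatHolonomic}.
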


We observe that the class of GKZ-systems given by a matrix of full row-rank fall into this case where the dimension of the group and the orbit agree. \cref{cor:DualDimEqual} generalizes the duality result for GKZ-systems \cite[Proposition~4.1]{Walther-Dual}.

\textbf{Application:} As a specific example of the construction described above, we consider a situation arising when studying so-called linear free divisors. We refer specifically to the setup of \cite[Section 4]{NarvaezSevenheck}. Namely, consider a reductive algebraic group acting prehomogeneously on a complex vector space $V_0$ (and denote by $x_1, \ldots, x_n$ a coordinate system relative to a chosen basis of $V_0$), meaning that there is a dense open orbit in $V_0$. Additionally, we require that the stabilizers for all points of this open orbit are finite groups. Let $D$ be the complement of the open orbit, and assume that $D\subseteq V$ is a divisor. The group we started with can then be characterized as $G_D:=\{g\in\GL(V)\,|\,gD=D\}$.
We also have $\dim(G_D)=\dim(V_0)=n$. Moreover, let $\xi_1,\ldots,\xi_n\in \fg_D$ be a basis of the Lie algebra of $G_D$, then let $A=(a_{ij})\in\textup{Mat}(n\times n, \dC[V]_1$) such that
$Z_V(\xi_j)=\sum_{i=1}^n a_{ij}\partial_{x_i}$ (where $Z_V$ is the notation for the anchor map used in \cref{ex:AModAsLieAlgebroid}). Then we require that
the determinant $f:=\det(A)\in \cO_V$ is reduced (it is then automatically a reduced equation of $D$, and necessarily we have that $f\in\dC[V]_n$). Under these conditions, we have that the module of vector fields $\Theta_V(-\log\,D)$ logarithmic along $D$ is free, and $D$ is called a linear free divisor (it is a free divisor in the sense of K.~Saito, see \cite{KSaito}, and it is called linear free since the polynomials $a_{ij}$ are linear forms on $V$). We will later further restrict to the class of so-called strongly Koszul (SK) free divisors, which in the current situation can simply be characterized by saying that the action of $G_D$ on $V$ has finitely many orbits.

We briefly recall some of the main constructions in \cite{NarvaezSevenheck}. First notice that there is a subgroup $G_0 \subset G_D$ (with $\dim(G_0)=n-1$) consisting of all linear transformations that stabilizes all fibres of $f$ (which was called $A_D$ in loc.cit.). Then we put $G:=\dC^*\times G_0$, $V=\dC\times V_0$ and we consider the extended action
$$
\begin{array}{rcl}
\rho: G & \longrightarrow & \Aut(V) \\ \\
(t,g) & \longmapsto & (v\mapsto t\cdot \rho_0(g)(v)),
\end{array}
$$
where $\rho_0$ is the restriction of the original prehomogeneous action to $G_0\subset G_D$. Let $Y_0:=f^{-1}(t)$ for $t\in\dC^*$, then $Y_0$ is a (closed) $G_0$-orbit. Let $Y:=C(Y_0)$ be its cone in $\dC^*\times V_0$, i.e. $Y=\rho(G)(1,p)$ for any point $p\in Y_0$.
In particular, we have an isomorphism $\iota:\dC^*\times Y_0\rightarrow Y$, $(t,y)\mapsto (t,t\cdot y)$.
Then $Y$ is a (usually non-closed) $G$-orbit, and we let $\overline{Y}\subset V$ be its closure. Clearly, the boundary $\partial Y$ is contained in $\{0\}\times V$. The main results of \cite{NarvaezSevenheck} then concerns the tautological system $\tau(\rho,\overline{Y},\beta)$. For simplicity, we will assume that $G_0$ is semi-simple, which then implies
that $\trace\circ \ad= 0$ and that $\beta_{|\fg_0}=0$, where $\fg=\dC\mathbf{e} \oplus \fg_0$. We cite the following result (notice that the various shifts and sign differences occuring here are due to the change of convention for the Euler field $Z_V(\mathbf{e})$ and for the definition of the module $\cO_{\dC^*}^{\beta(\mathbf{e})}$)
\begin{thm}[{\cite[Proposition 4.5]{NarvaezSevenheck}}]\label{thm:LFD-Taut}
Let $D\subset V$ be a SK linear free divisor defined by a group action $G_D\rightarrow \GL(V)$. Let $\rho:G:=\dC^*\times G_0 \rightarrow \GL(V)$ be as above, and denote by $k:Y\hookrightarrow V$ the locally closed embedding of the orbit $G$-orbit $Y$ into $V$. Then if
$\beta(\mathbf{e})\notin n\cdot (1+\roots(b_D))+\dZ_{> 0}$, we have
$$
\hat{\tau}(\rho, \overline{Y}, \beta) \cong (k\circ \iota)_+(\cO^{-\beta(\mathbf{e})}_{\dC^*}\boxtimes \cO_Y),
$$
where $\cO^{-\beta(\mathbf{e})}_{\dC^*}:=\cD_{\dC^*}/(t\partial_t-\beta(\mathbf{e}))$. In particular,
$\hat{\tau}(\rho, \overline{Y}, \beta)$ then underlies a complex Hodge module on $V$, i.e., an object of $\MHM(V,\dC)$,
which is an element in $\MHM(V)$, i.e. a rational Hodge module on $V$, if $\beta(\mathbf{e})\in \dZ$.
\end{thm}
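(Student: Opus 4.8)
The plan is to establish the asserted isomorphism of $\mathscr D_V$-modules first and then read off the Hodge-theoretic statement from the functoriality of mixed Hodge modules. Throughout I use the presentation $\hat\tau(\rho, \overline Y, \beta) \cong \mathscr D_V \otimes_{\cA_V} \O_{\overline Y}\{\beta'\}$ obtained in \cref{sec:Duality} and holonomicity (\cref{cor:holonomy}, using that the SK hypothesis yields finitely many $G$-orbits), so that both sides of the claimed isomorphism are holonomic and may be compared stratum by stratum along $\overline Y$.

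First I would restrict to the open orbit $Y$. Since $\dim G = \dim \overline Y = n$ and $Y$ is a single $G$-orbit, the stabilisers are finite, so the anchor map $Z_Y \colon \O_Y \otimes \fg \to \Theta_Y$ is a surjection of locally free $\O_Y$-modules of the common rank $n$, hence an isomorphism. Consequently $\cA_Y \xrightarrow{\cong} \mathscr D_Y$ and
\[\restr{\hat\tau(\rho, \overline Y, \beta)}{Y} \cong \mathscr D_Y \otimes_{\cA_Y} \O_Y\{\beta'\} \cong \O_Y\{\beta'\}\]
is a rank-one integrable connection. Transporting it along $\iota \colon \C^* \times Y_0 \xrightarrow{\cong} Y$ and using $\restr{\beta}{\fg_0} = 0$ together with $\trace \circ \ad = 0$ (semisimplicity of $G_0$), the only nontrivial twist survives in the $\C^*$-direction; unwinding the sign and shift conventions for $Z_V(\mathbf{e})$ identifies $\iota^+ \O_Y\{\beta'\}$ with $\O^{-\beta(\mathbf{e})}_{\C^*} \boxtimes \O_{Y_0}$, which is exactly the restriction to $Y$ of the right-hand side.

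The crux, and the step I expect to be hardest, is to upgrade this agreement on $Y$ to a global isomorphism on $V$. On the open set $U := V \setminus (\{0\} \times V_0) = \C^* \times V_0$ the support $\overline Y \cap U = Y$ is smooth and closed, so by Kashiwara's equivalence both modules restrict on $U$ to the closed direct image of the connection above and therefore agree. Writing $j \colon U \hookrightarrow V$ for the open inclusion, the global claim amounts to showing that the adjunction morphism
\[\hat\tau(\rho, \overline Y, \beta) \longrightarrow j_+ j^+ \hat\tau(\rho, \overline Y, \beta) = (k\circ\iota)_+(\O^{-\beta(\mathbf{e})}_{\C^*} \boxtimes \O_{Y_0})\]
is an isomorphism. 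I would verify this through the Kashiwara--Malgrange $V$-filtration of $\hat\tau$ along $t = 0$: the indicial roots governing this filtration are controlled by the $b$-function $b_D$ of the linear free divisor, and the hypothesis $\beta(\mathbf{e}) \notin n\cdot(1 + \roots(b_D)) + \dZ_{>0}$ is exactly the non-resonance condition under which this morphism acquires neither kernel nor cokernel supported on the boundary divisor $\{0\}\times V_0 \supseteq \partial Y$, hence is an isomorphism (and the target is concentrated in degree $0$). This is where the special geometry of SK linear free divisors---finiteness of orbits and freeness of $\Theta_V(-\log D)$, which pins down $b_D$---genuinely enters.

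Finally, the Hodge-theoretic conclusion follows by functoriality. On the smooth variety $\C^* \times Y_0$ the object $\O^{-\beta(\mathbf{e})}_{\C^*} \boxtimes \O_{Y_0}$ underlies a complex Hodge module: the factor $\O_{Y_0}$ is the constant Hodge module, while $\O^{-\beta(\mathbf{e})}_{\C^*} = \mathscr D_{\C^*}/(t\partial_t - \beta(\mathbf{e}))$ is the rank-one local system of monodromy $\exp(2\pi i\,\beta(\mathbf{e}))$, which carries a (complex) Hodge structure. Since $k\circ\iota$ is a morphism of algebraic varieties and $\MHM$ is stable under direct image, $(k\circ\iota)_+$ of this object is an object of $\MHM(V, \C)$; by the isomorphism just proved, so does $\hat\tau(\rho, \overline Y, \beta)$ underlie such an object. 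When $\beta(\mathbf{e}) \in \dZ$ the monodromy $\exp(2\pi i\,\beta(\mathbf{e}))$ is trivial, so the local system is constant and defined over $\Q$; the pushforward is then a rational Hodge module, i.e.\ an object of $\MHM(V)$.
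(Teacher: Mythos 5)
This theorem is not proved in the paper at all: it is imported verbatim (up to sign and shift conventions) from \cite[Proposition 4.5]{NarvaezSevenheck}, so the only meaningful comparison is against that source. Your overall skeleton does mirror the strategy there and in the surrounding material of this paper: on the open set $U=\dC^*\times V_0$ the anchor map $\cA_Y\to\mathscr D_Y$ is an isomorphism (finite stabilizers, $\dim G=\dim Y$), so $\restr{\hat\tau(\rho,\overline Y,\beta)}{U}$ is the Kashiwara direct image from $Y$ of a rank-one integrable connection, and the whole theorem reduces to showing that the adjunction morphism $\hat\tau(\rho,\overline Y,\beta)\to j_+j^+\hat\tau(\rho,\overline Y,\beta)$ is an isomorphism.

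The genuine gap is that this last step --- the entire mathematical content of the theorem --- is asserted rather than proved. You write that the hypothesis $\beta(\mathbf e)\notin n\cdot(1+\roots(b_D))+\dZ_{>0}$ ``is exactly the non-resonance condition'' under which the adjunction map has no kernel or cokernel supported on $\{0\}\times V_0$, but you give no argument linking the roots of the Bernstein--Sato polynomial $b_D$ of the divisor to the $V$-filtration (or local cohomology) of $\hat\tau$ along $t=0$. That link is precisely what \cite{NarvaezSevenheck} establishes, and it is not formal: it uses the SK hypothesis, the identification of $\hat\tau$ with an extension of scalars $\cD_V\otimes_{\cD_V(-\log D')}(-)$ from a logarithmic $\cD$-module, the logarithmic comparison and duality theorems of \cite{CN05,CN09}, and the symmetry of $\roots(b_D)$ from \cite{NarvaezDual}. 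Without such an argument you also cannot determine the \emph{direction} of the condition: as \cref{prop:casesLFD} shows, $\dZ_{>0}$-non-resonance corresponds to the ordinary direct image $(k\circ\iota)_+$ while $\dZ_{\leq 0}$-non-resonance corresponds to $(k\circ\iota)_\dag$, and nothing in your proposal distinguishes the two cases. Two smaller points: the connection on $Y$ is not literally $\O_Y\{\beta'\}$ but carries a side-changing/normal-bundle twist (it is of the form $(\omega_Y\{\beta\})^\vee$, cf.\ the conventions discussed around \cref{lem:RelDual}), and pinning down this parameter is exactly the ``sign and shift'' bookkeeping you defer; and the Hodge-theoretic conclusion via unitarity of the monodromy $e^{\pm 2\pi i\beta(\mathbf e)}$ implicitly requires $\beta(\mathbf e)$ real, which should be made explicit.
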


From the basic functorial properties of the holonomic duality functor, we obtain the following consequence.
\begin{cor}\label{cor:LFD-Dual}
Under the hypotheses of the previous theorem, we have
$$
\bD_{\cD_V}
\hat{\tau}(\rho, \overline{Y}, \beta) \cong
(k\circ \iota)_\dag \left(\cO^{\beta(\mathbf{e})}_{\dC^ *}\boxtimes\cO_Y\right).
$$
If moreover $\beta(\mathbf{e})\in\frac12 \dZ$, then there is a morphism
$$
\bD_{\cD_V}
\hat{\tau}(\rho, \overline{Y}, \beta) \longrightarrow
\hat{\tau}(\rho, \overline{Y}, \beta).
$$
\end{cor}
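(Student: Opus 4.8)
The plan is to obtain the map as the canonical ``forget-supports'' morphism between the two direct images along the locally closed embedding $j := k \circ \iota \colon \dC^* \times Y_0 \hookrightarrow V$, after matching its source and target with $\bD_{\cD_V}\hat\tau$ and $\hat\tau$ respectively. By \cref{thm:LFD-Taut} we have $\hat{\tau}(\rho, \overline{Y}, \beta) \cong j_+(\cO^{-\beta(\mathbf{e})}_{\dC^*} \boxtimes \cO_Y)$, and by the first part of this corollary $\bD_{\cD_V}\hat{\tau}(\rho, \overline{Y}, \beta) \cong j_\dag(\cO^{\beta(\mathbf{e})}_{\dC^*} \boxtimes \cO_Y)$. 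Since $\iota$ is an isomorphism onto the orbit $Y$ and $k$ is the locally closed embedding of $Y$ into $V$, the composite $j$ is locally closed; factoring $j$ as an open immersion into $\overline Y$ followed by a closed immersion, the two functors $j_\dag$ and $j_+$ are linked by the standard natural transformation $j_\dag \to j_+$ (equal on the closed part, the usual forget-supports morphism on the open part).

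The decisive point is that the hypothesis $\beta(\mathbf{e}) \in \tfrac12\dZ$ yields an isomorphism of rank-one regular connections $\cO^{\beta(\mathbf{e})}_{\dC^*} \cong \cO^{-\beta(\mathbf{e})}_{\dC^*}$ as $\cD_{\dC^*}$-modules. Indeed, writing $\cO^{c}_{\dC^*} = \cD_{\dC^*}/(t\partial_t + c)$, multiplication by $t^{\ell}$ defines an isomorphism $\cO^{c}_{\dC^*} \xrightarrow{\cong} \cO^{c+\ell}_{\dC^*}$ for every $\ell \in \dZ$; here the required shift is $\ell = 2\beta(\mathbf{e})$, which is an integer exactly under the half-integrality assumption. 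Taking the external product with $\cO_Y$ and applying $j_\dag$, this identifies $\bD_{\cD_V}\hat{\tau}(\rho, \overline{Y}, \beta) \cong j_\dag(\cO^{-\beta(\mathbf{e})}_{\dC^*} \boxtimes \cO_Y)$, i.e.\ with the \emph{same} module on the source as appears in the description of $\hat\tau$.

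Finally I would evaluate the natural transformation $j_\dag \to j_+$ at the single module $\cO^{-\beta(\mathbf{e})}_{\dC^*} \boxtimes \cO_Y$ to obtain a morphism
\[
j_\dag(\cO^{-\beta(\mathbf{e})}_{\dC^*} \boxtimes \cO_Y) \longrightarrow j_+(\cO^{-\beta(\mathbf{e})}_{\dC^*} \boxtimes \cO_Y)
\]
in $D^b_{qc}(\cD_V)$, whose source is $\bD_{\cD_V}\hat\tau$ and whose target is $\hat\tau$ by the above. As both objects are honest holonomic $\cD_V$-modules (the dual because $\bD$ preserves the holonomic heart, the target by \cref{thm:LFD-Taut}), a morphism between them in the derived category is the same as a morphism of $\cD_V$-modules, giving the asserted map. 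I expect the only real care to be needed in the bookkeeping of the parameter $\beta(\mathbf{e})$ through the external products and homological shifts, so as to confirm that the source of the forget-supports morphism is literally $\bD_{\cD_V}\hat\tau$; the existence of the transformation $j_\dag \to j_+$ and the rank-one identification are both formal once the half-integrality hypothesis is in place.
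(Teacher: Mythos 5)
Your proposal is correct and follows essentially the same route as the paper: the first isomorphism comes from applying $\bD_{\cD_V}$ to the description of $\hat{\tau}(\rho,\overline{Y},\beta)$ in \cref{thm:LFD-Taut} (duality exchanges $(k\circ\iota)_+$ and $(k\circ\iota)_\dag$ and negates the exponent), and the morphism is the forget-supports map $j_\dag \to j_+$ after identifying $\cO^{\beta(\mathbf{e})}_{\dC^*}\cong\cO^{-\beta(\mathbf{e})}_{\dC^*}$ when $2\beta(\mathbf{e})\in\dZ$. Your extra details (the explicit isomorphism by multiplication with $t^{2\beta(\mathbf{e})}$, and the remark that both objects lie in the holonomic heart so that a derived-category morphism is an honest module morphism) simply flesh out what the paper's one-line proof leaves implicit.
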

\begin{proof}
If $\beta(\mathbf{e})\in\frac12 \dZ$, then $\cO^{\beta(\mathbf{e})}_{\dC^ *}\cong \cO^{-\beta(\mathbf{e})}_{\dC^ *}$, and the morphism is then simply given by the forgetful morphism from the properly supported to the ordinary direct image.
\end{proof}

Combining this last statement with our duality result \cref{cor:DualDimEqual} then yields the following.
\begin{prop}\label{prop:casesLFD}
Let $D\subset V$ be an SK-linear free divisor, and let $G$, $\rho$, $\overline{Y}$ be as above. Then
\begin{enumerate}
    \item
        If $\beta(\mathbf{e})\notin n\cdot (1+\roots(b_D))+\dZ_{\leq 0}$, then
        $$
            \hat{\tau}(\rho,\overline{Y},\beta)=
            (k\circ \iota)_\dag \left(\cO^{-\beta(\mathbf{e})}_{\dC^ *}\boxtimes\cO_Y\right).
        $$
        In particular, under this assumption, we also obtain that $\hat{\tau}(\rho,\overline{Y},\beta)$ underlies an object in $\MHM(V,\dC)$, and in $\MHM(V)$ if $\beta(\mathbf{e})\in \dZ$.
    \item
        If $\beta(\mathbf{e})\in\frac12\dZ \,\backslash \, (n\cdot (1+\roots(b_D))+\dZ_{> 0})$, then the natural duality morphism from \cref{cor:LFD-Dual} is expressed as
            $$
            \hat{\tau}(\rho, \overline{Y}, 1-\beta) \longrightarrow
            \hat{\tau}(\rho, \overline{Y}, \beta).
            $$

    \item If $\beta(\mathbf{e})\notin n\cdot (1+\roots(b_D))+\dZ$,  then $\hat{\tau}(\rho,\overline{Y},\beta)$ underlies a simple pure complex Hodge module on $V$.  The total Fourier-Laplace transform $\tau(\rho, \overline{Y}, \beta)$ (i.e. the actual tautological system) has therefore has irreducible monodromy representation.
\end{enumerate}
\end{prop}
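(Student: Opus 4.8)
The plan is to deduce all three assertions from the duality isomorphism of \cref{cor:DualDimEqual}, once the geometry of $\overline Y$ is pinned down. First I would check that the hypotheses of \cref{cor:DualDimEqual} hold here: $\dim G = \dim(\dC^*\times G_0) = n = \dim\overline Y$, the SK-condition makes $\overline Y$ consist of finitely many $G$-orbits, and semisimplicity of $G_0$ gives $\trace\circ\ad = 0$ and $\restr{\gamma}{\fg_0} = 0$. The decisive geometric input is that $\overline Y$ is the hypersurface $\{f = t_0 v_1^n\}$ of degree $n$ in $V = \dC\times V_0$ (with $\dim V = n+1$), hence a complete intersection and in particular Gorenstein; \cref{ex:LCI-Gorenstein} then yields $\omega_{\overline Y}\cong\cO_{\overline Y}\{-\gamma\}$ with $\gamma(\mathbf e) = (n+1) - n = 1$. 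Substituting into \cref{cor:DualDimEqual} produces the backbone identity $\bD\hat\tau(\rho,\overline Y,\beta)\cong\hat\tau(\rho,\overline Y,1-\beta)$, where $1-\beta$ has $\mathbf e$-component $1-\beta(\mathbf e)$ and vanishes on $\fg_0$. This already matches the reflected parameter in the second assertion and drives the first.

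For the first assertion I would apply \cref{cor:LFD-Dual} (the dual of \cref{thm:LFD-Taut}) to the reflected parameter $1-\beta$: assuming $1-\beta(\mathbf e)\notin n(1+\roots(b_D))+\dZ_{>0}$ gives $\bD\hat\tau(\rho,\overline Y,1-\beta)\cong(k\circ\iota)_\dag(\cO^{1-\beta(\mathbf e)}_{\dC^*}\boxtimes\cO_Y)$. The backbone identity rewrites the left side as $\hat\tau(\rho,\overline Y,\beta)$, while on the right the $\cD_{\dC^*}$-module isomorphism $\cO^{\alpha}_{\dC^*}\cong\cO^{\alpha+1}_{\dC^*}$ (an integer twist being invertible on $\dC^*$) replaces the exponent $1-\beta(\mathbf e)$ by $-\beta(\mathbf e)$, giving the stated formula. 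It then remains to rewrite the index condition, and here $1-\beta(\mathbf e)\notin n(1+\roots(b_D))+\dZ_{>0}$ is equivalent to $\beta(\mathbf e)\notin n(1+\roots(b_D))+\dZ_{\leq 0}$ precisely because the roots of $b_D$ are symmetric about $-1$, so that $n(1+\roots(b_D))$ is stable under negation. The Hodge-theoretic statement is then free: $\hat\tau(\rho,\overline Y,\beta)\cong\bD\hat\tau(\rho,\overline Y,1-\beta)$, and $\bD$ preserves $\MHM(V,\dC)$ (and $\MHM(V)$ when $\beta(\mathbf e)\in\dZ$), while \cref{thm:LFD-Taut} endows $\hat\tau(\rho,\overline Y,1-\beta)$ with a Hodge structure. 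The second assertion is immediate: under $\beta(\mathbf e)\in\tfrac12\dZ$ together with the Theorem's hypothesis, \cref{cor:LFD-Dual} provides a morphism $\bD\hat\tau(\rho,\overline Y,\beta)\to\hat\tau(\rho,\overline Y,\beta)$, and the backbone identity rewrites its source as $\hat\tau(\rho,\overline Y,1-\beta)$.

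For the third assertion the stronger hypothesis $\beta(\mathbf e)\notin n(1+\roots(b_D))+\dZ$ excludes both the $\dZ_{>0}$- and the $\dZ_{\leq 0}$-coset, so \cref{thm:LFD-Taut} and the first assertion apply at once and identify $\hat\tau(\rho,\overline Y,\beta)$ with the $*$-extension $(k\circ\iota)_+\cM$ and simultaneously with the $!$-extension $(k\circ\iota)_\dag\cM$ of the simple pure Hodge module $\cM := \cO^{-\beta(\mathbf e)}_{\dC^*}\boxtimes\cO_Y$ on the smooth orbit $Y$. Writing $j := k\circ\iota$, so that $\hat\tau(\rho,\overline Y,\beta)$ is simultaneously the $!$- and the $*$-extension of $\cM$, the canonical map $j_!\cM\to j_*\cM$ is then an isomorphism (the boundary monodromy of $\cM$ being non-resonant, see below), whence all three coincide and $\hat\tau(\rho,\overline Y,\beta)\cong j_{!*}\cM$ is simple because $\cM$ is. Purity follows from a weight sandwich: $j_!\cM$ has weights $\leq w$ and $j_*\cM$ has weights $\geq w$ for $w$ the weight of $\cM$, so their common value is pure of weight $w$. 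Finally, the total Fourier--Laplace transform is an equivalence on holonomic $\cD$-modules and hence preserves simplicity, so $\tau(\rho,\overline Y,\beta) = \FL\,\hat\tau(\rho,\overline Y,\beta)$ is a simple holonomic module; being generically a connection on the complement of its singular locus, its restriction there is an irreducible local system, i.e.\ its monodromy representation is irreducible.

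The step I expect to be the main obstacle is the index bookkeeping in the first assertion — in particular justifying that the two index conditions coincide, which genuinely rests on the functional equation of the Bernstein--Sato polynomial $b_D$ (symmetry of its roots about $-1$), and correctly absorbing the convention shifts in $Z_V(\mathbf e)$ and in $\cO^{\bullet}_{\dC^*}$ flagged in the paper via the integer-twist isomorphism. In the third assertion the substantive point is that excluding the entire $\dZ$-coset forces the monodromy of $\cM$ around the boundary $\partial Y$ to avoid the eigenvalue $1$, which is exactly what makes the canonical map $j_!\cM\to j_*\cM$ an isomorphism; verifying this non-resonance from the root condition is where the precise relation between $\roots(b_D)$ and the boundary monodromy (from \cite{NarvaezSevenheck}) must be invoked.
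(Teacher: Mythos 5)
Your proposal follows the paper's own route almost verbatim for the backbone identity and for parts 1 and 2: you derive $\bD_{\cD_V}\hat\tau(\rho,\overline Y,\beta)\cong\hat\tau(\rho,\overline Y,1-\beta)$ from \cref{cor:DualDimEqual} via \cref{ex:LCI-Gorenstein} (correctly identifying $\overline Y$ as a degree-$n$ hypersurface in the $(n+1)$-dimensional $V$, so $\gamma(\mathbf e)=1$), apply \cref{thm:LFD-Taut} and \cref{cor:LFD-Dual} at the reflected parameter, absorb the integer twist $\cO^{1-\beta(\mathbf e)}_{\dC^*}\cong\cO^{-\beta(\mathbf e)}_{\dC^*}$, and convert the index condition using the symmetry of $1+\roots(b_D)$ about $0$ — exactly the paper's argument, including the dependence on \cite{NarvaezDual} for that symmetry.

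The one place where your write-up has a gap is the step in part 3 that you yourself flag as the main obstacle, and the point worth noting is that this obstacle is illusory. You justify the isomorphy of the canonical map $j_!\cM\to j_*\cM$ (with $j=k\circ\iota$, $\cM=\cO^{-\beta(\mathbf e)}_{\dC^*}\boxtimes\cO_Y$) by a non-resonance property of the boundary monodromy of $\cM$, which you defer to an unperformed verification against \cite{NarvaezSevenheck}. No such verification is needed: once \cref{thm:LFD-Taut} and part 1 give abstract isomorphisms $\hat\tau(\rho,\overline Y,\beta)\cong H^0 j_\dag\cM$ and $\hat\tau(\rho,\overline Y,\beta)\cong H^0 j_+\cM$, the canonical map between these two modules is automatically an isomorphism by a support argument. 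Indeed, its kernel and cokernel are supported on $\partial Y$; the kernel is a subobject of $H^0 j_\dag\cM$, but this module is abstractly isomorphic to $H^0 j_+\cM$, which admits no nonzero subobject supported on $\partial Y$, so the kernel vanishes; dually, the cokernel is a quotient of $H^0 j_+\cM\cong H^0 j_\dag\cM$, which admits no nonzero quotient supported on $\partial Y$, so the cokernel vanishes. This is what the paper's assertion $\hat\tau(\rho,\overline Y,\beta)=(k\circ\iota)_{\dag\,+}\left(\cO^{-\beta(\mathbf e)}_{\dC^*}\boxtimes\cO_Y\right)$ implicitly rests on, and it makes your simplicity, purity (weight sandwich) and Fourier--Laplace/monodromy conclusions go through with no further input on $\roots(b_D)$.
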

\begin{proof}
\begin{enumerate}
    \item
        By \cref{cor:DualDimEqual}, and under our assumptions (using \cref{ex:LCI-Gorenstein}), we have
        \begin{equation}\label{eq:Dual-LFD}
          \mathbb{D}_{\mathscr D_V} \, \hat \tau(\rho, \overline Y, \beta) = \hat \tau(\rho, \overline Y, 1-\beta),
        \end{equation}
        where $1-\beta$ means the Lie algebra homomorphism $\mathfrak g \to \C$ trivial on $\mathfrak g_0$ and given on $\mathbf e$ by $1-\beta(\mathbf e)$.
        Then the statement follows from \cref{thm:LFD-Taut} by rewriting the condition on $1-\beta(\mathbf{e})$ as a condition on $\beta(\mathbf{e})$, and using the symmetry around $0$ of the set $1+\roots(b_D)$ shown in \cite{NarvaezDual}.
    \item
        Follows by combining the second statement in \cref{cor:LFD-Dual} with \cref{eq:Dual-LFD}.
    \item
        Under the assumptions made,
                we have that
        $$
            \hat{\tau}(\rho,\overline{Y},\beta)=
            (k\circ \iota)_{\dag\,+} \left(\cO^{-\beta(\mathbf{e})}_{\dC^ *}\boxtimes\cO_Y\right),
        $$
        and therefore it necessarily underlies a simple pure Hodge module. For the last statement, consider the total Fourier-Laplace transformation functor
        $$
            \FL:\textup{Mod}(\cD_V) \longrightarrow \textup{Mod}(\cD_{V^\vee}),
        $$
        which is well known to be an equivalence of categories. The tautological system $\tau(\rho,\overline{Y},\beta)$ is by definition the module $\FL(\hat\tau(\rho,\overline{Y},\beta))$, therefore, if $\hat\tau(\rho,\overline{Y},\beta)$ is a simple $\cD_V$-module, then
        $\tau(\rho,\overline{Y},\beta)$ is a simple $\cD_{V^\vee}$-module. The monodromy representation of the restriction of $\tau(\rho,\overline{Y},\beta)$ to its smooth part is then necessarily irreducible.
\end{enumerate}
\end{proof}

Note that the intersection of $n \cdot (1+\roots(b_D)) + \Z_{>0}$ and $n \cdot (1+\roots(b_D)) + \Z_{\leq 0}$ is a finite set. Therefore, the above in particular shows:

\begin{cor}\label{cor:AllButFinteLFD}
  Let $D \subseteq V$ be an SK-linear free divisor, and let $G$, $\rho$, $\overline Y$ be as above. Then, for all but at most finitely many values of $\beta(\mathbf e)$, the $\mathscr D_V$-module $\hat\tau(\rho, \overline Y, \beta)$ underlies an object in $\textup{MHM}(V,\C)$.
\end{cor}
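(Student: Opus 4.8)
The plan is to deduce this corollary purely from the two complementary one-sided membership statements already at our disposal, together with the elementary finiteness observation stated just before it. First I would record the two ranges in which we already know that $\hat\tau(\rho,\overline Y,\beta)$ underlies an object of $\MHM(V,\dC)$. On one side, \cref{thm:LFD-Taut} yields MHM membership whenever $\beta(\mathbf e) \notin n\cdot(1+\roots(b_D)) + \dZ_{>0}$; on the other side, part~1 of \cref{prop:casesLFD} (which was obtained by applying the holonomic duality result \cref{cor:DualDimEqual} and using stability of $\MHM(V,\dC)$ under duality) yields MHM membership whenever $\beta(\mathbf e) \notin n\cdot(1+\roots(b_D)) + \dZ_{\leq 0}$. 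Consequently, the only values of $\beta(\mathbf e)$ for which neither statement applies are those lying in the intersection
\[
B := \bigl(n\cdot(1+\roots(b_D)) + \dZ_{>0}\bigr) \cap \bigl(n\cdot(1+\roots(b_D)) + \dZ_{\leq 0}\bigr),
\]
so that it remains only to verify that $B$ is a finite set.

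For the finiteness, I would set $S := n\cdot(1+\roots(b_D))$, which is a finite subset of $\dC$ because $b_D$ is a polynomial and hence has only finitely many roots. Distributing the intersection over the two unions gives $B = \bigcup_{s_1,s_2 \in S} (s_1 + \dZ_{>0}) \cap (s_2 + \dZ_{\leq 0})$. For a fixed pair $(s_1,s_2)$, the two shifted integer progressions meet only if $s_1 - s_2 \in \dZ$, and in that case their intersection is a (possibly empty) finite segment of integers: it is empty unless $s_1 - s_2 < 0$, in which case it consists of exactly $|s_1 - s_2|$ points. Since $S \times S$ is finite, $B$ is a finite union of finite sets, hence finite, and the corollary follows.

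There is essentially no genuine obstacle here: the statement is a bookkeeping consequence of the fact that the exclusion conditions in \cref{thm:LFD-Taut} and in \cref{prop:casesLFD} are complementary (one excludes an upward half-family, the other a downward half-family of integer shifts of $S$). The only points requiring a little care are to align these two conditions correctly, so that together they cover everything outside $B$, and to confirm that $\roots(b_D)$ — and hence $S$ — is finite, which is what makes the intersection of an "upward" and a "downward" family of integer half-lines finite.
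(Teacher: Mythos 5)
Your proposal is correct and follows exactly the paper's route: the paper deduces \cref{cor:AllButFinteLFD} by combining the MHM membership from \cref{thm:LFD-Taut} (for $\beta(\mathbf e) \notin n\cdot(1+\roots(b_D))+\dZ_{>0}$) with that from part~1 of \cref{prop:casesLFD} (for $\beta(\mathbf e) \notin n\cdot(1+\roots(b_D))+\dZ_{\leq 0}$), and noting that the intersection of the two excluded sets is finite. Your only addition is to spell out the elementary finiteness of that intersection, which the paper simply asserts.
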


In some special cases, where the roots of $b_D$ are known, we get an even sharper result. In particular, we know by \cite[Table 1]{Sev10}
that for the linear free divisors which
are discriminants in representation spaces for quivers of types $A_n$, $E_6$ and $D_m$ if $3 \nmid m-1$ that no two roots of $b_D$ differ by a multiple of $1/n$, so that the intersection of
 $n \cdot (1+\roots(b_D)) + \Z_{>0}$ and $n \cdot (1+\roots(b_D)) + \Z_{\leq 0}$ is necessarily empty, so that $\hat\tau(\rho, \overline Y, \beta)$ underlies an object in $\textup{MHM}(V,\dC)$ for all $\beta(\mathbf{e})$ in these cases.

\subsection{Case \texorpdfstring{$\dim(G)=\dim(\overline Y)+1$}{dim(G)=dim(Y)+1}} \label{sec:MIsNPlusOne}

We consider again the case of a representation $\rho_0 \colon G_0 \to V$ which we extend with the scaling action on $V$ to a representation $\rho \colon G \to V$ for $G = \C^* \times G_0$. Let $\overline Y$ be a $G$-orbit closure (this is in particular the affine cone over a projective $G_0$-variety $Z \subseteq \P V$). We then have the following general lemma (which does not depend on assumptions on $\dim(G)$ and $\dim(\overline{Y})$).

\begin{lem} \label{lem:bFct}
  Assume $\overline Y$ consists of finitely many $G_0$-orbits. For every Lie algebra homomorphism $\beta_0 \colon \mathfrak g_0 \to \C$ with $\hat\tau(\rho_0, \overline Y, \beta_0) \neq 0$, there exists a smallest monic univariate polynomial $b_{\beta_0} \neq 0$ such that $b_{\beta_0}(\dim V - Z_V(\mathbf e))$ lies in the ideal of $\hat\tau(\rho_0,\overline Y,\beta_0)$.

  If $\beta \colon \mathfrak g \to \C$ is such that $\restr{\beta}{\mathfrak g_0} = \beta_0$, then $\hat\tau(\rho,\overline Y,\beta) \neq 0$ if and only if $\beta(\mathbf e)$ is a root of $b_{\beta_0}$.
\end{lem}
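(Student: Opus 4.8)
The plan is to reduce the statement to a question about a single central operator acting on a graded cyclic module, and then to settle it via the generalized eigenspace decomposition of that operator.

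First I would make the passage from $\hat\tau(\rho,\overline Y,\beta)$ to $N:=\hat\tau(\rho_0,\overline Y,\beta_0)$ completely explicit. Write $\mathfrak g=\C\mathbf e\oplus\mathfrak g_0$ and set $\lambda:=\beta(\mathbf e)$ and $\theta:=\dim V - Z_V(\mathbf e)$. For $\xi\in\mathfrak g_0$ the scaling factor plays no role, so $Z_V(\xi)$ is computed from $\rho_0$, and since $\restr{\beta}{\mathfrak g_0}=\beta_0$ one gets (using \cref{nota:BetaPrime}) $\beta'(\xi)=\beta_0'(\xi)$; thus the generators $Z_V(\xi)-\beta'(\xi)$, $\xi\in\mathfrak g_0$, together with $\mathcal I_{\overline Y}$ are exactly the defining relations of $N$. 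As $\differential\rho(\mathbf e)=\id_V$ we have $\trace\circ\differential\rho(\mathbf e)=\dim V$ (cf.\ \cref{lem:RelDual}), so the only extra relation coming from $\mathbf e$ is $Z_V(\mathbf e)-\beta'(\mathbf e)=-(\theta-\lambda)$. Hence $\hat\tau(\rho,\overline Y,\beta)\cong N/\mathscr D_V(\theta-\lambda)\bar 1$, where $\bar 1$ denotes the cyclic generator of $N$; since $N=\mathscr D_V\bar 1$, this module is nonzero precisely when $\bar 1\notin\mathscr D_V(\theta-\lambda)\bar 1$.

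The next step is to exploit the grading $\deg x_i=1$, $\deg\partial_i=-1$ on $\mathscr D_V$. As $\overline Y$ is a cone, $\mathcal I_{\overline Y}$ is homogeneous, and the remaining relations $Z_V(\xi)-\beta_0'(\xi)$ ($\xi\in\mathfrak g_0$) are homogeneous of degree $0$; therefore $N$ is a graded $\mathscr D_V$-module with $\bar 1\in N_0$ and $N_0=(\mathscr D_V)_0\,\bar 1$, where $(\mathscr D_V)_0$ is the degree-zero subalgebra. Since $Z_V(\mathbf e)$ is, up to sign, the Euler field, $\theta$ is \emph{central} in $(\mathscr D_V)_0$. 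Because $(\theta-\lambda)\bar 1$ lies in degree $0$, the graded module $\mathscr D_V(\theta-\lambda)\bar 1$ contains the degree-$0$ element $\bar 1$ if and only if $\bar 1\in(\mathscr D_V)_0(\theta-\lambda)\bar 1=(\theta-\lambda)(\mathscr D_V)_0\bar 1=(\theta-\lambda)N_0$. Thus everything reduces to: $\hat\tau(\rho,\overline Y,\beta)\neq 0$ if and only if $\bar 1\notin(\theta-\lambda)N_0$.

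Now I would bring in $b_{\beta_0}$. By centrality of $\theta$ and $N_0=(\mathscr D_V)_0\bar 1$, the relation $b_{\beta_0}(\theta)\bar 1=0$ propagates to $b_{\beta_0}(\theta)N_0=0$, so $N_0$ is a module over $\C[t]/(b_{\beta_0})$ via $t\mapsto\theta$ and decomposes into generalized eigenspaces $N_0=\bigoplus_{\mu\in\roots(b_{\beta_0})}N_0^{(\mu)}$, with each projection $(\mathscr D_V)_0$-linear (whence $N_0^{(\mu)}=(\mathscr D_V)_0(\bar 1)_\mu$). On $N_0^{(\mu)}$ the operator $\theta-\lambda$ is invertible for $\mu\neq\lambda$ and nilpotent for $\mu=\lambda$, so $(\theta-\lambda)N_0=N_0$ whenever $\lambda\notin\roots(b_{\beta_0})$, giving $\hat\tau(\rho,\overline Y,\beta)=0$ in that case, and otherwise $\bar 1\in(\theta-\lambda)N_0$ iff $(\bar 1)_\lambda\in(\theta-\lambda)N_0^{(\lambda)}$. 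If $\lambda\in\roots(b_{\beta_0})$, minimality of $b_{\beta_0}$ forces the component $(\bar 1)_\lambda$ to have nilpotency index $d_\lambda\geq 1$, i.e.\ $(\theta-\lambda)^{d_\lambda-1}(\bar 1)_\lambda\neq 0$; writing a putative preimage as $(\bar 1)_\lambda=(\theta-\lambda)Q(\bar 1)_\lambda$ with $Q\in(\mathscr D_V)_0$ and applying $(\theta-\lambda)^{d_\lambda-1}$ (using centrality) gives $(\theta-\lambda)^{d_\lambda-1}(\bar 1)_\lambda=0$, a contradiction. Hence $\bar 1\notin(\theta-\lambda)N_0$ and $\hat\tau(\rho,\overline Y,\beta)\neq 0$.

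The main obstacle I anticipate is conceptual rather than computational: although $N$ is graded, $\theta$ does \emph{not} act as a scalar on $\bar 1$ but as left multiplication by a nontrivial central element of $(\mathscr D_V)_0$, and this is exactly what permits $b_{\beta_0}$ to have several roots; getting the reduction right therefore hinges on combining the grading with the centrality of $\theta$ to collapse the problem to the degree-zero piece. The side-checks (the correct sign and the value $\trace\circ\differential\rho(\mathbf e)=\dim V$ in the $\mathbf e$-relation, exactness of passing to the degree-zero part on affine $V$, and $(\mathscr D_V)_0$-linearity of the eigenspace projections) are routine once the grading and centrality are established.
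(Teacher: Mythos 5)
Your treatment of the second claim (the ``if and only if'') is correct: the reduction to the degree-zero part $N_0$ via the grading, the centrality of $\theta=\dim V - Z_V(\mathbf e)$ in $(\mathscr D_V)_0$, and the generalized-eigenspace argument over $\C[t]/(b_{\beta_0})$ are sound, and they even establish both directions, whereas the paper's own proof spells out only the direction ``$\beta(\mathbf e)$ not a root $\Rightarrow \hat\tau(\rho,\overline Y,\beta)=0$'', via the B\'ezout identity $p(s)b_{\beta_0}(s)+q(s)(s-\beta(\mathbf e))=1$ evaluated at $s=\dim V-Z_V(\mathbf e)$. So, modulo what follows, your route through the grading is a legitimate and in one respect more complete alternative to the paper's coprimality trick.

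However, there is a genuine gap: you never prove the first claim of the lemma, namely the \emph{existence} of a nonzero polynomial $b$ with $b(\dim V-Z_V(\mathbf e))$ in the ideal of $\hat\tau(\rho_0,\overline Y,\beta_0)$. Your third paragraph starts from ``the relation $b_{\beta_0}(\theta)\bar 1=0$'', i.e.\ it presupposes exactly what has to be shown; minimality is then harmless (the annihilating polynomials form an ideal of $\C[t]$), but the nontriviality of that ideal is the substance of the claim. This is not a formality: take $G_0$ trivial and $V=\C$, so that $\overline Y=\C$ is the closure of the $\C^*$-orbit; then $N=\mathscr D_{\C}$, $N_0=\C[x\partial_x]$ is infinite dimensional, and no nonzero polynomial in $\theta$ annihilates $\bar 1$. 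That situation is excluded precisely by the hypothesis that $\overline Y$ consists of finitely many $G_0$-orbits---a hypothesis your proof never invokes, which is the tell-tale sign of the missing step. The paper fills it as follows: right multiplication by $\dim V-Z_V(\mathbf e)$ is a well-defined $\mathscr D_V$-linear endomorphism $\varphi$ of $\hat\tau_0:=\hat\tau(\rho_0,\overline Y,\beta_0)$ (well-definedness from the $\C^*$-equivariance of the ideal, which your grading setup also provides); by \cref{cor:holonomy} the finitely-many-$G_0$-orbits assumption makes $\hat\tau_0$ holonomic, so $\operatorname{Hom}_{\mathscr D_V}(\hat\tau_0,\hat\tau_0)=H^{\dim V}a_+\bigl(\mathbb D\hat\tau_0\otimes^{\mathbb L}_{\O_V}\hat\tau_0\bigr)$ is a finite-dimensional $\C$-vector space, and hence $\varphi$ has a minimal polynomial, which is $b_{\beta_0}$. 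Once this step is inserted at the start, your argument goes through and yields the full statement.
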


We remark that we choose to consider $\dim V - Z_V(\mathbf e)$ for $\hat\tau(\rho_0,\overline Y, \beta_0)$ because, after Fourier-transform, this becomes the Euler vector field $E$ on $V^\vee$, meaning that $b_{\beta_0}(E)$ lies in the ideal of $\tau(\rho_0, \overline Y, \beta_0)$.

\begin{proof}
  Right-multiplication with $\dim V - Z_V(\mathbf e)$ defines a $\mathscr D_V$-linear endomorphism of the cyclic $\mathscr D_V$-module $\hat\tau(\rho_0,\overline Y, \beta_0)$ (it is well defined since the ideal of $\hat\tau(\rho_0,\overline Y, \beta_0)$ is
  $\dC^*$-equivariant).
  By \cref{cor:holonomy}, $\hat\tau_0 := \hat\tau(\rho_0,\overline Y, \beta_0)$ is holonomic, hence the vector space $\operatorname{Hom}_{\mathscr D_V}(\hat\tau_0,\hat\tau_0) = H^{\dim V} a_+ (\mathbb D \hat\tau_0 \otimes_{\O_V}^\mathbb{L} \hat\tau_0)$ is finite-dimensional. Therefore, the endomorphism given by right-multiplication with $\dim V - Z_V(\mathbf e)$ has a minimal polynomial, this is the polynomial $b_{\beta_0}$.

  For the second claim, consider $\beta \colon \mathfrak g \to \C$ with $\restr{\beta}{\mathfrak g_0} = \beta_0$ such that $\beta(\mathbf e)$ is not a root of the polynomial $b_{\beta_0}$. The ideal of $\hat\tau(\rho,\overline Y,\beta)$ is generated by $Z_V(\mathbf e)-\beta'(\mathbf e) = Z_V(\mathbf e) - \dim V + \beta(\mathbf e)$ and the ideal of $\hat\tau_0$, which contains $b(\dim V - Z_V(\mathbf e))$. Since the polynomials $b_{\beta_0}(s)$ and $s-\beta(\mathbf e)$ are assumed coprime, we have $p(s)b_{\beta_0}(s)+q(s)(s-\beta(\mathbf e))=1$ for some $p(s),q(s) \in \C[s]$. Plugging in $\dim V - Z_V(\mathbf e)$ into this polynomial equation, we conclude that $1$ lies in the ideal of $\hat\tau(\rho, \overline Y, \beta)$, hence $\hat\tau(\rho, \overline Y, \beta)=0$.
\end{proof}

\begin{exa}
If $\overline Y$ is the affine cone over a projective homogeneous space $G_0/P$ as in \cref{ex:HomSpace}, it turns out that there are only two $G_0$-orbits: $\{0\}$ and $\overline Y \setminus \{0\}$. Indeed, the way the parabolic subgroup acts on the one-dimensional linear subspace of $\overline Y$ spanned by $[1] \in G/P$, is given by a character $P\to \C^*$ and it is classically known that this data determines the equivariant line bundle $\Ell$ on $G_0/P$ uniquely. Since the very ample equivariant line bundle $\Ell$ is non-trivial, this means that $P$ acts non-trivially on this one-dimensional subspace of $\overline Y$. This implies that the $G$-orbit $\overline Y \setminus \{0\}$ is in fact already a $G_0$-orbit.

For $G_0$ semisimple, the are no non-trivial $\beta_0 \colon \mathfrak g_0 \to \C$. The $b$-function $b_0(s)$ is the polynomial
\[b_0(s) = \begin{cases}
s(s-\frac{\ell}{k}) &\text{if } \Ell^{\otimes \ell} \cong \omega_{G_0/P}^{\otimes (-k)} \text{ for some $\ell, k > 0$}\\
s &\text{otherwise.}
\end{cases}
\]
This follows from results in \cite{GRSSW}: The proof of Theorem~5.1 in loc.cit.\ shows that $b_0(s)$ divides $s(s-2\langle \delta , \mu \rangle/|\mu|^2)$, where
$\mu$ is the highest weight of the irreducible $G_0$-representation $V$ and $\delta$ is the half-sum of the positive roots of $G_0$. In the case $\Ell^{\otimes \ell} \cong \omega_{G_0/P}^{\otimes (-k)}$, the quantity $2\langle \delta, \mu\rangle/|\mu|^2$ equals $\ell/k$ by Proposition~5.9 in loc.cit. Finally, $b_0(s)$ is determined by the characterization that $\hat\tau(\rho, \overline Y, \beta) = 0$ except for exactly this case and the case of $\beta(\mathbf e) = 0$ (Corollary~6.13 in loc.cit.).
\end{exa}

\begin{prop}\label{prop:SelfDual}
  Consider the case $\dim G = \dim \overline Y + 1$. Assume that $\overline Y$ has finitely many $G_0$-orbits and is Gorenstein with  $\omega_{\overline Y} \cong \O_{\overline Y}\{-\gamma\}$.
        For every $\beta_0 \colon \mathfrak g_0 \to \C$, the $b$-function from \cref{lem:bFct}
        satisfies
  \[b_{\beta_0}(s) = b_{\delta_0+\gamma_0-\beta_0}(\gamma(\mathbf e)-s)\]
  for $\delta_0 := \restr{(\trace \circ \ad)}{\mathfrak g_0}$, $\gamma_0 := \restr{\gamma}{\mathfrak g_0}$.
  If $\beta \colon \mathfrak g \to \C$ is such that $\beta(\mathbf e)$ is a simple root of $b$, then
  \[\mathbb D \, \hat\tau(\rho, \overline Y, \beta) = \hat\tau(\rho, \overline Y, \trace \circ \ad + \gamma-\beta).\]
\end{prop}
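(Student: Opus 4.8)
The plan is to exploit that $G=\C^*\times G_0$ with $\mathbf e$ central and, crucially, $\dim G_0=\dim\overline Y=n$ (since $\dim G=n+1$), so that the complex $\hat T(\rho,\overline Y,\beta)$ collapses to a two-term complex built from the purely $G_0$-equivariant tautological system. Writing $\mathfrak g=\mathfrak g_0\oplus\C\mathbf e$ and $\mathcal A_V=\mathcal A_V^{G_0}\otimes_{\O_V}\C[\mathbf e]$ (legitimate because $\mathbf e$ is central, so $U(\mathfrak g)=U(\mathfrak g_0)\otimes\C[\mathbf e]$), I would compute $\hat T(\rho,\overline Y,\beta)=\mathscr D_V\otimes^{\mathbb L}_{\mathcal A_V}\O_{\overline Y}\{\beta'\}$ in two stages. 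Tensoring first over $\mathcal A_V^{G_0}$ yields the $G_0$-object $\hat T_0$, which by \cref{prop:THatHolonomic} applied to the $G_0$-action (where $\dim\overline Y-\dim G_0=0$) reduces to the single holonomic module $\hat\tau(\rho_0,\overline Y,\beta_0)$ in degree $0$. Tensoring then over $\C[\mathbf e]$ against $\C[\mathbf e]/(\mathbf e-\beta'(\mathbf e))$, resolved by the Koszul complex $[\C[\mathbf e]\xrightarrow{\mathbf e-\beta'(\mathbf e)}\C[\mathbf e]]$, exhibits
\[
\hat T(\rho,\overline Y,\beta)\;\cong\;\bigl[\,\hat\tau(\rho_0,\overline Y,\beta_0)\xrightarrow{\ \phi_\beta\ }\hat\tau(\rho_0,\overline Y,\beta_0)\,\bigr]\quad\text{in degrees }-1,0,
\]
where $\phi_\beta$ is right multiplication by $Z_V(\mathbf e)-\beta'(\mathbf e)=\beta(\mathbf e)-s$ with $s:=\dim V-Z_V(\mathbf e)$ (using $\beta'(\mathbf e)=\dim V-\beta(\mathbf e)$). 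Thus $H^0=\coker\phi_\beta=\hat\tau(\rho,\overline Y,\beta)$ and $H^{-1}=\ker\phi_\beta$, and by \cref{lem:bFct} the operator $s$ has minimal polynomial $b_{\beta_0}$ on $\hat\tau(\rho_0,\overline Y,\beta_0)$.

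For the final duality statement I would argue as follows. Assuming $\beta(\mathbf e)$ is a simple root of $b=b_{\beta_0}$, factor $b(s)=(s-\beta(\mathbf e))\,c(s)$ with $c(\beta(\mathbf e))\neq0$; the induced $\mathscr D_V$-module decomposition $\hat\tau(\rho_0,\overline Y,\beta_0)=\ker(s-\beta(\mathbf e))\oplus\ker c(s)$ shows $\phi_\beta=\beta(\mathbf e)-s$ vanishes on the first summand and is invertible on the second, whence $\ker\phi_\beta\cong\coker\phi_\beta$. Next I apply $\mathbb D$ to the isomorphism $\mathbb D\hat T(\rho,\overline Y,\beta)\cong\hat T(\rho,\overline Y,\tilde\beta)[-1]$ of \cref{thm:dualityTautGorenstein}, with $\tilde\beta=\trace\circ\ad+\gamma-\beta$. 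Since all cohomologies are holonomic and $\mathbb D$ is (contravariantly) $t$-exact on $D^b_h(\mathscr D_V)$ — holonomic modules are pure, $\sheafExt^j_{\mathscr D_V}(-,\mathscr D_V)=0$ for $j\neq\dim V$ — one has $H^k\mathbb D(-)=\mathbb D H^{-k}(-)$, extending the degree-$0$ fact already used in \cref{prop:DualityTautCM}. Taking $H^1$ gives $\mathbb D(\ker\phi_\beta)\cong H^0\hat T(\rho,\overline Y,\tilde\beta)=\hat\tau(\rho,\overline Y,\tilde\beta)$; combining with $\ker\phi_\beta\cong\coker\phi_\beta=\hat\tau(\rho,\overline Y,\beta)$ and applying $\mathbb D$ yields
\[
\mathbb D\,\hat\tau(\rho,\overline Y,\beta)\;\cong\;\hat\tau(\rho,\overline Y,\tilde\beta)=\hat\tau(\rho,\overline Y,\trace\circ\ad+\gamma-\beta),
\]
which is the desired statement. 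Note this argument does not require the $b$-function identity, only the simple-root hypothesis.

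For completeness I would also prove the $b$-function identity. By \cref{cor:DualDimEqual} for $G_0$ one has $\mathbb D\hat\tau(\rho_0,\overline Y,\beta_0)\cong\hat\tau(\rho_0,\overline Y,\tilde\beta_0)$ with $\tilde\beta_0=\delta_0+\gamma_0-\beta_0$, and $\mathbb D$ induces an anti-isomorphism of endomorphism algebras, which preserves the minimal polynomial of the single element $s$. Comparing, for varying $\beta(\mathbf e)$ with $\beta_0$ fixed, the connecting map $\phi_\beta=\beta(\mathbf e)-s$ with that of the dual complex $\phi_{\tilde\beta}=(\gamma(\mathbf e)-\beta(\mathbf e))-s$ through \cref{thm:dualityTautGorenstein} forces the image of $s$ under this anti-isomorphism to be $\gamma(\mathbf e)-s$ (the $\beta(\mathbf e)$-dependence cancels, which also fixes the sign). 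Hence the minimal polynomial of $\gamma(\mathbf e)-s$ on $\hat\tau(\rho_0,\overline Y,\tilde\beta_0)$ equals $b_{\beta_0}$, i.e.\ $b_{\beta_0}(s)=b_{\tilde\beta_0}(\gamma(\mathbf e)-s)$ (up to the harmless normalization that matches leading coefficients), which is the claimed identity; in particular simple roots correspond to simple roots under $s\leftrightarrow\gamma(\mathbf e)-s$.

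The main obstacle I anticipate is the first step: rigorously establishing the two-term description with the connecting map correctly identified as right multiplication by $Z_V(\mathbf e)-\beta'(\mathbf e)$. This requires tracking the commuting left $\mathscr D_V$- and residual right $\C[\mathbf e]$-structures through the staged derived tensor product (equivalently, through the Lie-algebroid Hochschild--Serre sequence for the central ideal $\C\mathbf e\subseteq\mathfrak g$) and verifying that the leftover $\mathbf e$-action is exactly the $\mathscr D_V$-linear operator of \cref{lem:bFct}. A secondary subtlety, relevant only to the $b$-function identity, is upgrading the comparison of connecting maps from an isomorphism of two-term complexes to an equality of operators; this is cleanest if one observes that the isomorphism of \cref{thm:dualityTautGorenstein} is $\C^*$-equivariant, so that it intertwines the monodromy operators and thereby pins down the image of $s$ as $\gamma(\mathbf e)-s$.
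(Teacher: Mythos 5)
Your route to the second claim (the duality statement for $\hat\tau$) is correct but genuinely different from the paper's. The paper never forms the two-term complex: it reads off $\hat\tau(\rho,\overline Y,\beta)=\coker(\varphi-\beta(\mathbf e)\id)$ directly from the cyclic presentations, where $\varphi$ is right multiplication by $\dim V-Z_V(\mathbf e)$ on $\hat\tau(\rho_0,\overline Y,\beta_0)$, uses the simple-root decomposition to replace this cokernel by $\ker(\varphi-\beta(\mathbf e)\id)$, and then dualizes at the module level; its duality inputs are the $G_0$-level statement \cref{cor:DualDimEqual} plus the functorial identification of $\mathbb D\varphi$ as right multiplication by $\gamma(\mathbf e)-(\dim V-Z_V(\mathbf e))$. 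You instead establish the two-term presentation of $\hat T(\rho,\overline Y,\beta)$ and apply \cref{thm:dualityTautGorenstein} for the full group $G$ (shift $[n-m]=[-1]$) together with exactness of $\mathbb D$ on holonomic modules; this buys you the duality claim \emph{without ever identifying} $\mathbb D\varphi$, at the cost of proving the two-term description, which you correctly flag as the main obstacle. That description is indeed provable: since $\mathbf e$ is central in $\mathfrak g$, the Chevalley--Eilenberg complex for $\mathfrak g=\mathfrak g_0\oplus\C\mathbf e$ is the total complex of a double complex whose columns are two copies of $\cC^\bullet(\O_{\overline Y},\beta_0')$, concentrated in degree $0$ by \cref{prop:THatHolonomic} applied to the $G_0$-action, and whose horizontal differential induces right multiplication by $Z_V(\mathbf e)-\beta'(\mathbf e)$ on $H^0$. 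So this half of your proposal is sound, and it has the pleasant feature of not depending on the $b$-function identity at all.

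Your argument for the $b$-function identity, however, has a genuine gap. Comparing $\mathbb D\hat T(\rho,\overline Y,\beta)$ with $\hat T(\rho,\overline Y,\tilde\beta)[-1]$ for all values $\lambda=\beta(\mathbf e)$ only yields, on cohomology, isomorphisms $\ker(\lambda\id-\mathbb D\varphi)\cong\ker((\gamma(\mathbf e)-\lambda)\id-\varphi')$ and likewise for cokernels, where $\varphi'$ denotes right multiplication by $\dim V-Z_V(\mathbf e)$ on $\hat\tau(\rho_0,\overline Y,\delta_0+\gamma_0-\beta_0)$. Such kernel/cokernel data determines only the \emph{root sets} of the minimal polynomials under $\lambda\mapsto\gamma(\mathbf e)-\lambda$, not their multiplicities: for instance the zero operator on $\C^2$ and the operator $J_2(0)\oplus 0$ on $\C^3$ (a nilpotent Jordan block plus a zero block) have isomorphic kernels and cokernels of $\lambda-T$ for every $\lambda$, yet minimal polynomials $x$ and $x^2$. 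Hence no comparison of connecting maps through derived-category isomorphisms can "force" $\mathbb D\varphi$ to correspond to $\gamma(\mathbf e)\id-\varphi'$, and the polynomial identity $b_{\beta_0}(s)=b_{\delta_0+\gamma_0-\beta_0}(\gamma(\mathbf e)-s)$ does not follow this way. What is needed is precisely the operator-level identification of $\mathbb D\varphi$, and that is the actual content of the paper's proof: it is obtained by tracking the functoriality of the isomorphism of \cref{thm:DualityMain} through \cref{lem:RelDual} and the trivialization $\omega_{\overline Y}\cong\O_{\overline Y}\{-\gamma\}$ leading to \cref{thm:dualityTautGorenstein}. Your proposed repair — that the duality isomorphism is "$\C^*$-equivariant" and intertwines monodromy operators — is not an observation but is exactly this functoriality statement, and it is the step your proposal leaves unproven; note in particular that the shift by $\gamma(\mathbf e)$ is not a general monodromic fact but enters through the Gorenstein trivialization inside the duality isomorphism, so it cannot be sidestepped.
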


\begin{rmk}
  If $G_0$ is semisimple, then $\mathfrak g_0 = [\mathfrak g_0,\mathfrak g_0]$, so there are no non-trivial characters $\beta_0 \colon \mathfrak g_0 \to \C$ and there is only $b := b_0$. Then we obtain the symmetry $b(s) = b(\gamma(\mathbf e)-s)$.
\end{rmk}

\begin{proof}
  Since $\dim G_0 = \dim \overline Y$, we know from \cref{cor:DualDimEqual} that
  \[\mathbb D\hat\tau(\rho_0, \overline Y, \beta_0) \cong \hat\tau(\rho_0, \overline Y, \delta_0 + \gamma_0 - \beta_0)\]
  Consider the endomorphism $\varphi$ of the cyclic $\mathscr D_V$-module $\hat\tau(\rho_0, \overline Y, \beta_0)$ given by right-multiplication with $\dim V - Z_V(\mathbf e)$. This dualizes to an endomorphism
  \[\hat\tau(\rho_0, \overline Y, \delta_0 + \gamma_0 - \beta_0) \cong \mathbb D\hat\tau(\rho_0, \overline Y, \beta_0) \xrightarrow{\mathbb D \varphi} \mathbb D\hat\tau(\rho_0, \overline Y, \beta_0) \cong \hat\tau(\rho_0, \overline Y, \delta_0 + \gamma_0 - \beta_0).\]
  By functoriality of the isomorphism in \cref{thm:DualityMain} and by tracking how this functoriality carries through the isomorphism in \cref{lem:RelDual} and $\omega_{\overline Y} \cong \O_{\overline Y}\{-\gamma\}$ to lead to \cref{thm:dualityTautGorenstein}, we see that $\mathbb D \varphi$ is given by right-multiplication with $\gamma(\mathbf e) - (\dim V - Z_V(\mathbf e))$. By functoriality of $\bD$, the morphisms $\varphi$ and $\mathbb D\varphi$ have the same minimal polynomial, therefore we can conclude that $b_{\beta_0}(s) = b_{\delta_0+\gamma_0-\beta_0}(\gamma(\mathbf e)-s)$.

  For the duality claim, note that $\hat\tau(\rho, \overline Y, \beta) = \coker(\varphi - \beta(\mathbf e)\id)$, since it arises from $\hat \tau(\rho_0, \overline Y, \beta_0)$ by quotienting out the left ideal generated by $-(Z_V(\mathbf e)-\beta'(e)) = (\dim V - Z_V(\mathbf e)) - \beta(\mathbf e)$. We have $\hat\tau(\rho_0,\overline Y, \beta_0) \cong \ker(\varphi-\beta(\mathbf e) \id) \oplus \im(\varphi-\beta(\mathbf e) \id)$, because $\beta(\mathbf e)$ is assumed a simple root of the minimal polynomial of $\varphi$. Hence, $\hat\tau(\rho, \overline Y, \beta) = \coker(\varphi - \beta(\mathbf e)\id) \cong \ker(\varphi - \beta(\mathbf e) \id)$. By dualizing, we see that $\mathbb D \hat \tau(\rho, \overline Y, \beta) \cong \coker(\mathbb D \varphi - \beta(\mathbf e) \id)$. As discussed before, the morphism $\mathbb D \varphi - \beta(\mathbf e) \id$ is right-multiplication with $\gamma(\mathbf e) -\beta(\mathbf e) - (\dim V - Z_V(\mathbf e))$ on $\hat \tau(\rho_0, \overline Y, \delta_0+\gamma_0-\beta_0)$. Its cokernel is by definition $\hat \tau(\rho, \overline Y, \delta+\gamma-\beta)$ for $\delta :=\trace \circ \ad$ (note that $\delta(\mathbf e) = 0$).
                       \end{proof}

\subsection{Homogeneous spaces}\label{sec:HomSpaces}

Consider again the case of the affine cone $\overline Y \subseteq V$ over a projective homogeneous space $X:=G_0/P$ as in \cref{ex:HomSpace}. Here, $V := \Gamma(X,\Ell)^\vee$ for a $G$-equivariant very ample line bundle on $X$ for $G = \C^* \times G_0$. Assume $G_0$ is semisimple, so that every Lie algebra homomorphism $\beta\colon \mathfrak g = \mathfrak g_0 \oplus \C \mathbf e \to \C$ is given by $\restr{\beta}{\mathfrak g_0} = 0$ and $\beta(\mathbf e) \in \C$.

Let $\beta(\mathbf e) = \ell \in \Z_{>0}$ and assume $\Ell^{\otimes \ell} = \omega_{G_0/P}^\vee$ (which is then the only case in which $\tau(\rho, \overline Y, \beta)$ is non-zero). In \cite[Theorem 6.3 and Corollary 6.13]{GRSSW}, it was shown that
\[\hat\tau(\rho, \overline Y, \beta) = H^0\iota_\dag \O_{\overline{Y} \setminus 0},\]
where $\iota \colon \overline Y \setminus 0 \hookrightarrow V$. Then
\[\mathbb D\hat\tau(\rho, \overline Y, \beta) = H^0\iota_+ \O_{\overline Y}\]
and by \cref{eq:DualHomSpaceGorenstein} in \cref{ex:HomSpace} this is $H^{n-m}\hat T(\rho, \overline Y, 0)$, where $n := \dim \overline Y$, $m:= \dim G$.

In particular, we have a natural morphism
\begin{equation} \label{eq:morphismToDual}
\hat\tau(\rho, \overline Y, \beta) = H^0 \iota_\dag \O_{\overline Y \setminus 0} \to H^0 \iota_+ \O_{\overline Y \setminus 0} = \mathbb D \hat\tau(\rho, \overline Y, \beta) = H^{n-m} \hat T(\rho, \overline Y, 0)
\end{equation}
whose image is $\iota_{\dag +}\cO_{\overline{Y}\backslash 0}$, i.e, the intersection cohomology $\mathscr D$-module of the singular variety $\overline{Y}$.

In order to given an explicit description of $\iota_{\dag +}\cO_{\overline{Y}\backslash 0}$ it would therefore be of interest to describe the morphism $\hat\tau(\rho, \overline Y, \beta) \to H^{n-m}\hat T(\rho, \overline Y, 0)$ explicitly. Recall that the latter is the cohomology of the complex $\mathscr D_V \otimes_{\O_V} \O_{\overline Y} \otimes_{\O_V} \bigwedge_{\O_V}^{-\bullet} (\O_V \otimes \mathfrak g)$ in cohomological degree $m-n = \dim G_0 - \dim X = \dim P$. It is natural to suspect that the parabolic subgroup $P$ and its Lie algebra $\mathfrak p$ lead through $\dim \bigwedge^{m-n} \mathfrak p = 1$ to a class of the $(n-m)$-th cohomology, to which the generator of the cyclic $\mathscr D_V$-module $\hat\tau(\rho, \overline Y, \beta)$ maps under \eqref{eq:morphismToDual}. More specifically, the locally free Lie algebroid $\mathcal E := \ker(\O_Y \otimes \mathfrak g \twoheadrightarrow \Theta_Y)$ of rank $m-n$ on $Y := \overline Y \setminus \{0\}$ has determinant $\bigwedge_{\O_Y}^{m-n} \mathcal E \cong \bigwedge_{\O_Y}^m (\O_Y \otimes \mathfrak g) \otimes_{\O_Y} \omega_Y^\vee \cong \O_Y$ by the Gorenstein property of $\overline Y$ as discussed in \cref{ex:HomSpace}.
This leads to a distinguished global section of $\bigwedge^{m-n}_{\O_Y} \mathcal E \subseteq \bigwedge^{m-n}_{\O_Y} (\O_Y \otimes \mathfrak g)$ and therefore of $\O_{\overline Y} \otimes \bigwedge^{m-n}_{\O_V} (\O_V \otimes \mathfrak g)$
which defines a class in $H^{n-m} \hat T(\rho, \overline Y, 0)$. We expect that the generator of $\hat \tau(\rho, \overline Y, \beta)$ is mapped to this global section by the morphism \eqref{eq:morphismToDual}. We postpone the details of this construction to a subsequent paper.

\begin{exa}
  For $G_0 = \SL(n)$ consider $G_0/P = \P^{n-1}$ and the equivariant line bundle $\Ell = \O_{\P^{n-1}}(d)$ with $d \mid n$ (so that $\beta(\mathbf e) = n/d \in \Z$).  $\overline Y$ is the cone over the $d$-th Veronese variety of $\P^{n-1}$. Note that $\mathfrak{sl}(n) \oplus \C \mathbf e \cong \mathfrak{gl}(n)$.
  The coordinates $x_1,\dots,x_n$ on $\C^n$ lead to coordinates on $V =\Sym^d \C^n$ being the degree~$d$ monomials in $x_1,\dots,x_n$.
    On $U := \overline Y \setminus \{x_n^d = 0\}$, the elements
  \[\{x_n^d \otimes E_{ij} - x_j x_n^{d-1} \otimes E_{in} \mid  i \leq n, \ j < n\} \subseteq \O_V \otimes \mathfrak{gl}(n)\]
  form a basis of $\ker(\O_U \otimes \mathfrak{gl}(n) \twoheadrightarrow \Theta_U)$. Their wedge product is
  \[\bigwedge_{i,j} (x_n^d \otimes E_{ij} - x_j x_n^{d-1} \otimes E_{in}) =
    \sum_{a \in \{1,\dots,n\}^n} (-1)^{\sum_{i} a_i} \, (x_{a_1}x_n^{d-1})(x_{a_2}x_n^{d-1})\dots (x_{a_n}x_n^{d-1}) \, \hat{E}_{1 a_1} \wedge \dots \wedge \hat{E}_{n a_n}.\]
  Here, $\hat{E}_{1 a_1} \wedge \dots \wedge \hat{E}_{n a_n}$ means the $(n^2-n)$-term wedge product obtained from $E_{11} \wedge E_{12} \wedge E_{13} \dots \wedge E_{nn}$ by removing the terms $E_{i a_i}$.
  The element $x_{a_1}\dots x_{a_n} \in \Sym^n (\C^n)^\vee$, which can be expressed as a product of $n/d$ linear forms on $V$ in several ways, but is a well-defined element of $\O_{\overline Y}$. Hence, in $\O_{\overline Y} \otimes_{\O_V} \bigwedge_{\O_V}^{n^2-n} \mathfrak{gl}(n)$, we may consider the element where the above element is divided by $x_n^{n(d-1)}$ (which was invertible on $U$):
  \[\zeta = \sum_{a \in \{1,\dots,n\}^n} (-1)^{\sum_{i} a_i} \, x_{a_1}x_{a_2}\dots x_{a_n} \, \hat{E}_{1 a_1} \wedge \dots \wedge \hat{E}_{n a_n} \in \O_{\overline Y} \otimes_{\O_V} \bigwedge_{\O_V}^{n^2-n} (\O_V \otimes \mathfrak{gl}(n)).\]
  One can check that $1 \otimes \zeta \in \mathscr D_V \otimes_{\O_V} \O_{\overline Y} \otimes_{\O_V}\bigwedge^{m-n}_{\O_V} (\O_V \otimes \mathfrak{gl}(n))$ lies in the kernel of the differential of the complex $\cC^{n-m}(\O_{\overline Y}, 0')$. We expect the morphism $\hat\tau(\rho, \overline Y, \beta) \to \D \hat\tau(\rho, \overline Y, \beta) = H^{n-m} \hat T(\rho, \overline Y, 0)$ to be given (up to a constant) by mapping the generator of $\hat\tau(\rho, \overline Y, \beta)$ to $1 \otimes \zeta$. In this example, it can be checked using some topological arguments that $\hat\tau(\rho, \overline Y, \beta)$ is self-dual, so this morphism will also be an isomorphism. Despite this speciality, the description of $\hat\tau(\rho, \overline Y, \beta) \to H^{n-m}\hat T(\rho, \overline Y, 0)$ in this example should be illustrative of the general case.
\end{exa}

\bibliographystyle{amsalpha-modified}

\providecommand{\bysame}{\leavevmode\hbox to3em{\hrulefill}\thinspace}
\providecommand{\MR}{\relax\ifhmode\unskip\space\fi MR }
\providecommand{\MRhref}[2]{  \href{http://www.ams.org/mathscinet-getitem?mr=#1}{#2}
}
\providecommand{\href}[2]{#2}

\vspace*{1cm}

\nd
Paul G\"orlach\\
Otto-von-Guericke-Universität Magdeburg\\
Fakult\"at f\"ur Mathematik\\
Institut f\"ur Algebra und Geometrie\\
Universitätsplatz 2\\
39106 Magdeburg\\
Germany\\
paul.goerlach@ovgu.de\\

\nd
Christian Sevenheck\\
Fakult\"at f\"ur Mathematik\\
Technische Universit\"at Chemnitz\\
09107 Chemnitz\\
Germany\\
christian.sevenheck@mathematik.tu-chemnitz.de\\

\end{document}